\documentclass[aos,preprint]{imsart}

\RequirePackage{amsthm,amsmath,amsfonts,amssymb}
\RequirePackage[numbers,sort&compress]{natbib}
\RequirePackage[colorlinks,citecolor=blue,urlcolor=blue]{hyperref}
\RequirePackage{graphicx}
\RequirePackage{mathtools,bm,booktabs,enumitem,mathrsfs}
\RequirePackage{placeins}
\RequirePackage{tikz}
\RequirePackage{pgfplots}
\usepgfplotslibrary{groupplots}
\pgfplotsset{compat=1.18}
\startlocaldefs
\theoremstyle{plain}
\newtheorem{theorem}{Theorem}

\newtheorem{proposition}[theorem]{Proposition}
\newtheorem{corollary}{Corollary}
\theoremstyle{definition}
\newtheorem{assumption}{Assumption}
\newtheorem{example}{Example}
\newtheorem{remark}{Remark}
\newtheorem{algorithm}{Algorithm}

\newcommand{\R}{\mathbb R}
\newcommand{\E}{\mathbb E}
\newcommand{\Pp}{\mathbb P}
\newcommand{\diag}{\operatorname{diag}}
\newcommand{\supp}{\operatorname{supp}}
\newcommand{\dd}{\mathrm d}
\newcommand{\calF}{\mathcal F}
\newcommand{\calK}{\mathcal K}
\newcommand{\calP}{\mathcal P}
\newcommand{\bnorm}[1]{\left\lVert #1\right\rVert}

\newcommand{\veeop}{\mathbin{\vee}}
\newcommand{\ind}{\mathbf 1}
\newcommand{\tr}{\operatorname{tr}}
\endlocaldefs

\begin{document}

\begin{frontmatter}

\title{Ultra-high-dimensional spot support screening in continuous-time regression}
\runtitle{Spot support screening}

\begin{aug}

\author[A]{\fnms{Haibin}~\snm{Zhu}
\ead[label=e1]{haibinzhu@jnu.edu.cn}}

\address[A]{Department of Statistics and Data Science, School of Economics, Jinan University
\printead[presep={,\ }]{e1}}

\end{aug}

\begin{abstract}
We study variable screening for high-frequency continuous-time regression,
targeting the support of the spot regression coefficient. We establish uniform
entrywise bounds for local estimates of the marginal correlation vector and
covariate correlation matrix under coordinatewise truncation and finite-variation
jumps. Under a local averaged $\alpha$-H\"older condition, the rate is
$\{(\log p)/n\}^{\alpha/(2\alpha+1)}$, up to logarithmic factors. This yields
sure screening for the marginal support. We also derive a list-size minimax lower
bound that matches the localization and multiplicity order when the retained list size $d$ satisfies
$\log(p/d)\asymp\log p$. For regression-support recovery under local dependence,
we propose partial-residual spot iterative screening with pathwise Lepski
adaptation. Under sparse eigenvalues and standardized beta-min, the procedure
has the sure screening property and requires only correlation columns associated
with the current active set. Simulation studies and a high-frequency factor
application illustrate its performance.
\end{abstract}

\begin{keyword}[class=MSC]
\kwd{Primary 62G08, 62J05. Secondary 62G20, 60G44}
\end{keyword}

\begin{keyword}
\kwd{continuous-time regression}
\kwd{high-frequency data}
\kwd{variable screening}
\kwd{spot correlation}
\kwd{Lepski adaptation}
\end{keyword}
\end{frontmatter}

% ======================================================================
\section{Introduction}
\label{sec:introduction}

High-frequency observations make it possible to study regression relationships at a fixed point in time. The local nature of the problem, however, creates a distinct dimensionality constraint. A spot estimator uses only the increments in a shrinking neighborhood of the target time, so the relevant comparison is between the ambient dimension $p_n$ and the number of increments available in a local window, rather than the full-sample size $n$. This issue arises naturally in intraday panels containing large collections of assets, factors, and portfolio returns. Recent empirical work uses large panels of intraday factor and portfolio returns, named `factor zoo', to study return predictability and high-frequency asset-pricing relations \cite{AletiBollerslevSiggaard2025,AletiBollerslev2025}. Our goal is different. We study which coordinates of a continuous-time regression are active at a fixed time and seek a screening procedure that retains the spot regression support before a lower-dimensional model is fitted.

A large literature provides the statistical foundation for local estimation from high-frequency data. Realized covariation gives estimators of covariance, regression, and correlation for discretely observed semimartingales \cite{BarndorffNielsenShephard2004}, while local-constancy arguments provide a general approach to high-frequency inference \cite{MyklandZhang2009}. Further methods for uncertainty assessment under irregular and noisy high-frequency observations are developed in \cite{MyklandZhang2017}. Spot covariance and related volatility functionals have been studied under increasingly general high-frequency settings \cite{BandiReno2018,BibingerHautschMalecReiss2014,BibingerEtAl2019, JacodRosenbaum2013}. For large panels, local and high-dimensional factor methods have been developed to accommodate time-varying factor structure \cite{Kong2017,KongLinLiuLiu2023,KongLiu2018,KongLiuZhou2019}, while high-dimensional volatility and covariance structure has been studied using regularization and spectral methods \cite{AitSahaliaXiu2019,BuLiLintonWang2026,Chen2024PCA,KimKongLiWang2018,Kong2018}. Continuous-time regression and time-varying factor loadings are studied in \cite{AitSahaliaKalninaXiu2020,LiTodorovTauchen2017,LiTodorovTauchen2017Jump}, with recent work on spot-regression inference \cite{BollerslevLiRen2024,JacodLiLiao2021}, high-dimensional coefficient estimation and testing \cite{ChenFengMyklandZhang2026,ChenMyklandZhang2024,KimOhShin2026,ShinKim2025,ShinKimNoisy2024}, and penalized model selection \cite{KolokolovYu2026}. These works primarily concern coefficient estimation, inference, or model selection, whereas we study dimension reduction for the regression support at a fixed time when the ambient dimension may be much larger than the number of increments available in a local estimation window. 

Variable screening provides a natural route to dimension reduction in ultrahigh-dimensional regression. Sure independence screening ranks marginal utilities and aims to retain all active variables with probability tending to one \cite{FanLv2008}. Subsequent work developed generalized, nonparametric, and robust screening criteria while retaining sure-screening guarantees \cite{FanSong2010,FanFengSong2011,LiPengZhangZhu2012,ChangTangWu2013}. Forward and conditional procedures have also been proposed to reduce the limitations of purely marginal ranking in correlated designs \cite{BarutFanVerhasselt2016,Wang2009}, while high-frequency information has also been used for portfolio-oriented asset screening \cite{WangChenLianChen2022}. The spot problem considered here combines these screening issues with features that are specific to high-frequency data. The screening scores must be estimated from a local window whose length controls both sampling variation and localization bias. The local smoothness of the covariance process is unknown. Jumps must be removed while maintaining simultaneous control over a growing number of coordinates. These features make the effective screening problem local in time even though the full path is observed at high frequency.

A second difficulty concerns the distinction between marginal relevance and regression relevance. After standardization, the spot marginal correlation vector satisfies
\[
r_\tau=R_\tau\theta_\tau,
\]
where $R_\tau$ is the spot covariate correlation matrix and $\theta_\tau$ is the standardized spot regression coefficient. Thus marginal correlation directly identifies the support of $r_\tau$, but it need not identify the support of $\theta_\tau$. Local dependence can suppress the marginal score of an active coordinate through cancellation, and it can make an inactive coordinate marginally prominent through correlation with active variables. This distinction is central to our analysis. We study marginal support screening as a well-defined statistical problem in its own right, and we use partial residualization to recover the regression support when marginal signals are distorted by local dependence.

Our first contribution is a uniform theory for the local correlation quantities used by screening. Under coordinatewise truncation and a dimension-uniform finite-variation jump condition, we establish a simultaneous entrywise bound for the estimated marginal correlation vector and covariate correlation matrix over a grid of local windows. Under a local averaged $\alpha$-H\"older condition, the leading rate is
\[
\left\{\frac{\log p_n}{n}\right\}^{\alpha/(2\alpha+1)}
\]
up to the stated confidence and grid factors. An active-set Lepski rule \cite{Lepski1991} adapts the local window to the unknown regularity. Applied to the marginal correlation vector, this result yields the sure screening property for the marginal support. We also derive a minimax lower bound for procedures that retain at most $d_n$ variables. When $\log(p_n/d_n)\asymp\log p_n$, the lower bound matches the localization and multiplicity order of the upper result.

Our second contribution concerns the regression support. We propose partial-residual spot iterative screening, abbreviated PR-SISIS. At each update, the procedure removes the linear contribution of the current active set and ranks the remaining variables by local partial-residual covariance scores. The window is selected again using the marginal vector together with the correlation columns required by the current active set. Under a sparse-eigenvalue condition and standardized beta-min, a uniform perturbation bound preserves the population residual signal along the screening iterations and leads to the sure screening property for the spot regression support. The same structure also keeps the implementation sparse. With model cap $q_n$, the procedure computes at most $q_n$ correlation columns, involving $O(p_nq_n)$ distinct covariate pairs rather than a dense $p_n\times p_n$ matrix.

Simulation studies examine the effects of signal strength, dimension, dependence, retained-list size, and changing local correlations. The empirical illustration uses a high-frequency factor panel to compare marginal and partial-residual screening at a fixed intraday time. The two procedures produce different retained sets in this dependent panel, which illustrates the distinction between marginal and conditional local relevance. Additional numerical results and all proofs are provided in the Supplementary Material \cite{ZhuSupplement2026}. The proposed screening methods are available in the \textsf{R} package \texttt{hfsis} at \url{https://github.com/HBZhuLab/hfsis}. Section~\ref{sec:methods} introduces the model and screening procedures. Section~\ref{sec:theory} presents the theoretical results. Sections~\ref{sec:simulation} and \ref{sec:empirical} report the simulation and empirical results. Section~\ref{sec:conclusions} concludes.

Throughout, $p=p_n$ denotes the ambient dimension, which may depend on $n$, and $\bar p_n=p_n+1$. For a vector $v\in\R^{p_n}$ and an index set $A\subseteq\{1,\ldots,p_n\}$, $v_A$ denotes the subvector indexed by $A$. For a matrix $M$, $M_{AB}$ denotes the submatrix with row indices in $A$ and column indices in $B$, and $M_{AA}$ is a principal submatrix. We write
\[
\supp(v)=\{j:v_j\neq0\},\qquad
\bnorm{v}_q=\Big(\sum_j|v_j|^q\Big)^{1/q},
\]
with the usual modification for $q=\infty$, and
\[
\bnorm{M}_{\max}=\max_{j,k}|M_{jk}|,\qquad
\bnorm{M}_{\mathrm{op}}=\sup_{\bnorm{v}_2=1}\bnorm{Mv}_2.
\]
For a symmetric matrix $M$, $\lambda_{\min}(M)$ denotes its smallest eigenvalue, $M\succeq0$ means that $M$ is positive semidefinite, and $I_d$ denotes the $d\times d$ identity matrix. The notation $a_n\lesssim b_n$ means $a_n\le Cb_n$ for a constant $C$ independent of $n$, and $a_n\asymp b_n$ means $a_n\lesssim b_n$ and $b_n\lesssim a_n$. All limits are taken as $n\to\infty$. The symbols $C,c,c_1,c_2,\ldots$ denote positive constants whose values may change from line to line.

\section{Model and screening procedures}
\label{sec:methods}

This section introduces the continuous-time regression, the two support targets, and the screening procedures. Section~\ref{subsec:model-targets} derives the population relation that separates marginal signal strength from regression relevance. Section~\ref{subsec:procedures} then constructs the common local correlation input and the two screening rules studied below.

\subsection{Model, spot targets, and identification}
\label{subsec:model-targets}

We observe a real-valued response process $Y$ and an $\R^{p_n}$-valued covariate process
\[
X=(X_1,\ldots,X_{p_n})^\top
\]
on $[0,1]$ at the regular times $\{t_i^n\}_{i=0,\ldots,n}$, with $t_i^n=i\Delta_n$ and $\Delta_n=n^{-1}$. For any c\`adl\`ag process $H$, define
\[
\Delta_i^nH=H_{t_i^n}-H_{t_{i-1}^n}.
\]
For a semimartingale $H$, $H^c$ denotes its continuous local martingale part. For two continuous local martingales $M$ and $N$, $\langle M,N\rangle$ denotes their predictable quadratic covariation. The superscript $(n)$ on the processes is suppressed when no confusion can arise. The object of interest is local at a fixed interior point $\tau\in(0,1)$.

Let $(\Omega,\calF,(\calF_t)_{t\in[0,1]},\Pp)$ satisfy the usual conditions. The regression model is
\begin{equation}
  \dd Y_t=\beta_{t-}^\top\dd X_t+\dd\varepsilon_t,
  \label{eq:regression-model}
\end{equation}
where $\beta$ is adapted, c\`adl\`ag, sparse, and locally bounded on the estimation neighborhood. The left limit $\beta_{t-}$ is the predictable integrand, and at every jump time
\begin{equation*}
  \Delta Y_t=\beta_{t-}^\top\Delta X_t+\Delta\varepsilon_t.
\end{equation*}
The residual process $\varepsilon$ is a real-valued It\^o semimartingale, and the $(p_n+1)$-dimensional process $(X^\top,\varepsilon)^\top$ is a joint It\^o semimartingale. We impose the strong orthogonality condition
\begin{equation*}
  \langle X^c,\varepsilon^c\rangle_t=0,
  \qquad t\in[0,1],
\end{equation*}
where the left-hand side is understood componentwise.

Collect the observed processes in
\begin{equation*}
  Z_t=(X_t^\top,Y_t)^\top\in\R^{\bar p_n},
  \qquad \bar p_n=p_n+1.
\end{equation*}
Possibly after passing to a very good filtered extension of the original space, take a common Grigelionis representation of the joint It\^o semimartingale $(X^\top,\varepsilon)^\top$ on a Polish mark space $(\mathcal E,\mathscr E)$, driven by a $d_{W,n}$-dimensional Brownian motion $W$ and a Poisson random measure $\mu$ on $[0,1]\times\mathcal E$ with compensator $\nu(\dd t,\dd z)=\dd t\,\lambda(\dd z)$, where $\lambda$ is a $\sigma$-finite measure. For a predictable function $f$, write
\[
f\star\mu_t=\int_0^t\int_{\mathcal E}f(s,z)\,\mu(\dd s,\dd z),
\qquad
f\star(\mu-\nu)_t=\int_0^t\int_{\mathcal E}f(s,z)\,(\mu-\nu)(\dd s,\dd z).
\]
%Using the truncation function $h(u)=u\ind_{\{\bnorm{u}_\infty\le1\}}$
The induced representation of $Z$ is
\begin{equation*}
\begin{aligned}
  Z_t={}&Z_0+\int_0^t b_s\,\dd s+\int_0^t\sigma_s\,\dd W_s\\
  &+(\delta\ind_{\{\bnorm{\delta}_\infty\le1\}})\star(\mu-\nu)_t
  +(\delta\ind_{\{\bnorm{\delta}_\infty>1\}})\star\mu_t.
\end{aligned}
\end{equation*}
where $b_t\in\R^{\bar p_n}$, $\sigma_t\in\R^{\bar p_n\times d_{W,n}}$, and $\delta(t,z)\in\R^{\bar p_n}$ are the induced drift, diffusion, and jump coefficients of $Z$ under the displayed truncation convention, respectively. See \cite{JacodProtter2012} for more details.

The continuous covariance density of $Z$ is
\begin{equation*}
  \Sigma_t=\sigma_t\sigma_t^\top.
\end{equation*}
We assume that $\Sigma$ admits a c\`adl\`ag version on a right neighborhood of $\tau$ and that, almost surely, all diagonal entries of this version are strictly positive throughout that neighborhood. We fix this version throughout.
Let $\delta^X(t,z)\in\R^{p_n}$ and $\delta^\varepsilon(t,z)\in\R$ denote the jump coefficients of $X$ and $\varepsilon$, respectively, in their joint representation. The jump coefficient of the response and the augmented jump coefficient are
\begin{equation*}
  \delta^Y(t,z)=\beta_{t-}^\top\delta^X(t,z)+\delta^\varepsilon(t,z),
  \qquad
  \delta(t,z)=\bigl((\delta^X(t,z))^\top,\delta^Y(t,z)\bigr)^\top.
\end{equation*}

Let $C_t$, $c_t$, and $c_{YY,t}$ denote the corresponding blocks of this fixed c\`adl\`ag version. Define $c_{\varepsilon\varepsilon,t}=c_{YY,t}-\beta_t^\top C_t\beta_t$, which is a c\`adl\`ag version of $\dd\langle\varepsilon^c\rangle_t/\dd t$.
Then, we can denote
\begin{equation*}
  C_t=\frac{\dd\langle X^c\rangle_t}{\dd t},
  \qquad
  c_t=\frac{\dd\langle X^c,Y^c\rangle_t}{\dd t}=C_t\beta_t,
\end{equation*}
and
\begin{equation*}
  c_{YY,t}=\frac{\dd\langle Y^c\rangle_t}{\dd t}
  =\beta_t^\top C_t\beta_t+c_{\varepsilon\varepsilon,t}.
\end{equation*}
The process $\beta_-$ is used in the stochastic integral, whereas $\beta$ is used in the c\`adl\`ag density relation because $\beta_- = \beta$ outside a $\dd t\otimes\dd\Pp$ null set. Strong orthogonality gives the displayed density identities $\dd t\otimes\dd\Pp$-almost everywhere, and since both sides have c\`adl\`ag versions, the identities extend indistinguishably to every time point. Thus the forward window beginning at $\tau$ targets the right-continuous coefficient $\beta_\tau$.

Define
\[
  D_t=\diag(C_{11,t},\ldots,C_{p_n p_n,t}),
  \qquad
  R_t=D_t^{-1/2}C_tD_t^{-1/2}.
\]
The marginal spot correlation vector and the standardized regression coefficient are
\[
  r_t=\frac{D_t^{-1/2}c_t}{\sqrt{c_{YY,t}}},
  \qquad
  \theta_t=\frac{D_t^{1/2}\beta_t}{\sqrt{c_{YY,t}}},
\]
so that
\begin{equation}
  r_t=R_t\theta_t.
  \label{eq:spot-identification}
\end{equation}
The augmented spot correlation matrix is
\begin{equation*}
  \mathcal R_t=
  \begin{pmatrix}
    R_t&r_t\\
    r_t^\top&1
  \end{pmatrix}\succeq0,
\end{equation*}
which is positive semidefinite because it is the instantaneous correlation matrix of the martingale parts of $(X^\top,Y)^\top$.

The identity $r_t=R_t\theta_t$ leads to two different support targets:
\begin{equation*}
  S_\tau^\beta=\supp(\beta_\tau)=\supp(\theta_\tau),
  \qquad
  S_\tau^m=\supp(r_\tau).
\end{equation*}
The first is the regression support and is our primary target. The second is the set visible to marginal correlation screening. They coincide, for example, when $R_\tau=I_{p_n}$ and may be quite different under strong local dependence.

To see their difference, the following population example shows how local dependence can simultaneously suppress the marginal signal of an active variable and inflate that of an inactive surrogate.

\begin{example}\label{ex:population}
Let the first six coordinates have three independent $2\times2$ correlation blocks with correlations $0.90$, $0.85$, and $0.80$, respectively. Take
\[
  \beta_\tau=(0.50,-0.52,0.40,-0.35,0.30,0)^\top
\]
and set the spot residual variance to one. After standardizing $Y$, active coordinate 4 has absolute marginal score $0.009$, whereas inactive coordinate 6 has score $0.220$. Residualizing each coordinate against its block partner gives active scores between $0.087$ and $0.102$ and an inactive score of zero for coordinate 6. Thus marginal screening targets $S_\tau^m$, while partial-residual screening can recover signals in $S_\tau^\beta$ that are hidden by local dependence.
\end{example}

\subsection{Local estimation and screening procedures}
\label{subsec:procedures}

The spot quantities in \eqref{eq:spot-identification} are not observed. They must be estimated over a window containing $k$ increments and spanning $h=k/n$ units of calendar time. Under local $\alpha$-H\"older regularity, the leading entrywise error has the form
\begin{equation*}
  \underbrace{\sqrt{\frac{\log p}{k}}}_{\text{sampling variation}}
  +
  \underbrace{\left(\frac{k}{n}\right)^\alpha}_{\text{localization bias}},
\end{equation*}
where $\alpha$ controls the smoothness of local covariance paths of $\Sigma_t$. The first term decreases with the window size and the second increases. Their balance gives
\[
  k_n^\star\asymp
  n^{2\alpha/(2\alpha+1)}(\log p)^{1/(2\alpha+1)},
  \qquad
  \varepsilon_n\asymp
  \left\{\frac{\log p}{n}\right\}^{\alpha/(2\alpha+1)}.
\]
The formal analysis below includes the Bernstein linear term, the bandwidth grid, and the confidence level. The marginal procedure uses only the estimated correlation vector. At each partial-residual update, the bandwidth comparison augments that vector by the correlation columns of the current active set, so the comparison object changes with the active set.

For an integer $k$, define the first increment whose left endpoint is not before $\tau$ by
\[
  j_n(\tau)=\min\{i\in\{1,\ldots,n\}:t_{i-1}^n\ge\tau\},
  \qquad
  \tau_n=t_{j_n(\tau)-1}^n,
\]
and let
\begin{equation*}
  I_{n,k}(\tau)
  =\{j_n(\tau),j_n(\tau)+1,\ldots,j_n(\tau)+k-1\}.
\end{equation*}
The window contains exactly $k$ increments. The forward window targets the right-continuous coefficient. A symmetric window targets the same quantity only when the continuous covariance density is continuous at $\tau$. The one-sided construction is retained to allow a right-continuous target with a different left limit.

Because $\tau\in(0,1)$ and $k_{\max}/n\to0$, we have $\tau_n+k_{\max}\Delta_n\le1$ for all sufficiently large $n$.

Fix constants $0<\alpha_0<\infty$ and $0<\underline s\le\overline s<\infty$, and let $s_{a,n}$, $a=1,\ldots,\bar p_n$, be possibly random coordinate scales. We assume that there are events $\mathcal H_n$ such that
\begin{equation}
  \Pp(\mathcal H_n)\to1,
  \qquad
  \underline s\le\min_{a\le\bar p_n}s_{a,n}\le\max_{a\le\bar p_n}s_{a,n}\le\overline s
  \quad\text{on }\mathcal H_n.
  \label{eq:scale-bounds}
\end{equation}
No independence between the scales and the observed increments is required.
Let $r_0\in[0,1)$ be the jump-activity index in Assumption~\ref{ass:jump}, and choose $\varpi$ satisfying
\begin{equation}
  \frac{1}{2(2-r_0)}<\varpi<\frac12.
  \label{eq:threshold-range}
\end{equation}
The parameter $r_0$ controls small-jump activity in the usual high-frequency sense; see, for example, \cite{AitSahaliaJacod2009,ManciniReno2011}. The range in \eqref{eq:threshold-range} is the standard finite-variation truncation regime. The uniform high-dimensional requirements used below are imposed separately in \eqref{eq:grid-compatibility}.

Define the thresholds for truncation by $\vartheta_{a,n}=\alpha_0s_{a,n}\vartheta_n$ for $a=1,\dots, \bar p_n$, with the truncation scales $\vartheta_n=\Delta_n^\varpi$. Set
\[
  \underline\vartheta_n=\min_{1\le a\le\bar p_n}\vartheta_{a,n},
  \qquad
  \overline\vartheta_n=\max_{1\le a\le\bar p_n}\vartheta_{a,n}.
\]
For the proof, also define the deterministic envelopes
\[
  v_{-,n}=\alpha_0\underline s\,\Delta_n^\varpi,
  \qquad
  v_{+,n}=\alpha_0\overline s\,\Delta_n^\varpi.
\]
On $\mathcal H_n$, since the scales $s_{a,n}$ are uniformly bounded above and away from zero,
\[
  v_{-,n}\le\underline\vartheta_n\le\overline\vartheta_n\le v_{+,n},
  \qquad
  v_{-,n}\asymp\vartheta_n\asymp v_{+,n}.
\]
For each coordinate, define the truncated increment
\begin{equation*}
  \widetilde\Delta_i^n Z_a
  =\Delta_i^nZ_a
   \ind_{\{|\Delta_i^nZ_a|\le \vartheta_{a,n}\}},
  \qquad
  \widetilde\Delta_i^nZ
  =(\widetilde\Delta_i^nZ_1,\ldots,
  \widetilde\Delta_i^nZ_{\bar p_n})^\top,
\end{equation*}
Coordinatewise truncation accommodates heterogeneous scales without a vector norm that grows with dimension. The theory allows random scales satisfying \eqref{eq:scale-bounds}. In the numerical study, we use a full-sample bipower scale and threshold exponent 0.47. Section~\ref{sec:simulation} states the rule, and the Supplement gives further implementation details.

Denote the local truncated realized covariance matrix as
\begin{equation*}
  \widehat\Sigma^{\mathrm{tr}}(k)
  =\frac{1}{k\Delta_n}
    \sum_{i\in I_{n,k}(\tau)}
    \widetilde\Delta_i^nZ
    (\widetilde\Delta_i^nZ)^\top.
\end{equation*}
This applies the standard two-coordinate truncation to every covariance entry without using a Euclidean vector norm that grows with $p_n$. Because truncation is applied to the vector before the outer product is formed, $\widehat\Sigma^{\mathrm{tr}}(k)$ is positive semidefinite for every $k$.

Let $v_n\downarrow0$ and set
\[
  \widehat V_a(k)=\widehat\Sigma_{aa}^{\mathrm{tr}}(k)\vee v_n,
  \qquad a=1,\ldots,\bar p_n,
\]
and write
\[
  \widehat V(k)=\diag\{\widehat V_1(k),\ldots,\widehat V_{\bar p_n}(k)\}.
\]
\begin{remark}
The sequence \(v_n>0\) is a deterministic variance floor used only to keep the correlation normalization well defined when a local diagonal covariance estimate is zero or numerically too small. Any sequence satisfying \(v_n\downarrow0\) is admissible. Under the conditions of Theorem~1 below, the covariance bound established in its proof implies that, on the same event and for all sufficiently large $n$,
\[
\min_{a\leq\bar p_n}
\widehat\Sigma_{aa}^{\mathrm{tr}}(k)
\geq c_-/2
\]
uniformly over the candidate windows. Hence, for \(v_n<c_-/2\), the floor is inactive on that event and does not affect the convergence rate.
\end{remark}

The estimated augmented correlation matrix is
\begin{equation}
  \widehat{\mathcal R}(k)
  =\widehat V(k)^{-1/2}\widehat\Sigma^{\mathrm{tr}}(k)\widehat V(k)^{-1/2}
  =
  \begin{pmatrix}
    \widehat R(k)&\widehat r(k)\\
    \widehat r(k)^\top&\widehat R_{YY}(k)
  \end{pmatrix}.
  \label{eq:estimated-augmented-correlation}
\end{equation}
Correlation normalization is essential. Raw covariance ranking favors volatile covariates, while local slope ranking favors low-volatility covariates. When the variance floor is inactive, the components of $\widehat r(k)$ are invariant to positive coordinatewise rescaling of the observed processes.

We choose the window from a geometric grid $\calK_n=\{k_{n,1}<\cdots<k_{n,M_n}\}$ with $k_{n,1}=k_{\min}\to\infty$, $k_{n,M_n}=k_{\max}$, and $k_{\max}/n\to0$. For fixed constants $1<\rho_-\le\rho_+<\infty$, assume
\[
  \rho_-\le\frac{k_{n,m+1}}{k_{n,m}}\le\rho_+,
  \qquad m=1,\ldots,M_n-1.
\]
This gives $O(\log n)$ candidate windows and places an oracle window within a fixed factor of a grid point. A dyadic grid is the special case with both ratios equal to two.

The Lepski comparison requires a stochastic radius that controls all
entries of the estimated correlation vector and matrix simultaneously over
the candidate windows. Let \(x=x_n\geq0\) be a deterministic confidence
index and define
\begin{equation*}
  \Lambda_n(x)
  =
  2\log(e\bar p_n)
  +
  \log\{2|\calK_n|\}
  +
  x,
  \qquad
  a_{n,k}(x)
  =
  \sqrt{\frac{\Lambda_n(x)}{k}}
  +
  \frac{\Lambda_n(x)}{k}.
\end{equation*}
The term \(2\log(e\bar p_n)\) accounts for the simultaneous control of the \(O(\bar p_n^2)\) covariance entries, while \(\log\{2|\calK_n|\}\) accounts for the candidate windows. The confidence index \(x\) determines the exponential tail probability. The quantity \(a_{n,k}(x)\) is the fixed-window stochastic radius, where the square-root component is the variance term in a Bernstein bound for products of increments, and the second component controls the corresponding large-deviation remainder. In particular, \(a_{n,k}(x)\) decreases as the number \(k\) of increments in the window increases.

The feasible truncated estimator must also be uniformly close to the
infeasible estimator constructed from the jump-free process. Define the
jump and truncation remainder by
\begin{equation*}
  \rho_n^{\mathrm{jump}}
  =
  |\calK_n|^{1/2}\vartheta_n^{\,1-r_0/2}
  +
  |\calK_n|\vartheta_n^{\,2-r_0}.
\end{equation*}
The first term arises from products between the continuous increment and the residual jump contribution, while the second collects the pure-jump and bad-increment terms. 

We impose the following jump-truncation compatibility conditions
\begin{equation}
  \ell_n(x):=  \log\{e\bar p_n k_{\max}|\calK_n|\}+x= o(n^{1-2\varpi}),~\text{and}~\sup_{k\in\calK_n}\frac{\rho_n^{\mathrm{jump}}} {a_{n,k}(x)+n^{-1/2}}=o(1).
  \label{eq:grid-compatibility}
\end{equation}
The first condition ensures that, after taking the maximum over coordinates and local increments, the continuous martingale increments remain below the truncation thresholds with probability tending to one. Indeed, the relevant exponential scale is \(\vartheta_n^2/\Delta_n\asymp n^{1-2\varpi}\). The second condition requires the jump and truncation remainder to be negligible relative to the continuous fixed-window error, uniformly over the candidate grid. Thus, under the stated compatibility conditions, finite-variation jumps do not affect the first-order window comparison.

In particular, up to the displayed grid and confidence factors, the first condition requires $\log p_n=o(n^{1-2\varpi})$. For an active set $A\subseteq\{1,\ldots,p_n\}$, define
\[
  \widehat T_A(k)
  =\bigl(\widehat r(k),\widehat R_{\cdot A}(k)\bigr),
\]
where the matrix component is empty when $A=\varnothing$. For two candidate windows $k,\ell\in\calK_n$, let
\[
  d_{k\ell}(A)
  =\bnorm{\widehat r(k)-\widehat r(\ell)}_\infty
  \veeop
  \bnorm{\widehat R_{\cdot A}(k)-\widehat R_{\cdot A}(\ell)}_{\max},
\]
with the second term defined as zero for $A=\varnothing$. The active-set Lepski selector is
\begin{equation}
  \widehat k(A)
  =\max\left\{
    k\in\calK_n:
    d_{k\ell}(A)
    \leq C_{\mathrm L}\{a_{n,k}(x)+a_{n,\ell}(x)\}
    \text{ for all }\ell\in\calK_n,\ \ell\leq k
  \right\}.
  \label{eq:lepski-selector}
\end{equation}
The smallest grid point is always admissible, so the selector is well defined. When $A=\varnothing$, \eqref{eq:lepski-selector} is a marginal-only comparison. Once variables enter the active set, their correlation columns enter the subsequent comparisons. Correlation entries outside the sequence of selected active sets are neither computed nor used to choose the implemented bandwidth.

For a list size $1\le d\le p_n$, the marginal set is
\begin{equation*}
  \widehat k_{\mathrm m}=\widehat k(\varnothing),
  \qquad
  \widehat S_m(d)
  =
  \left\{
    j:|\widehat r_j(\widehat k_{\mathrm m})|
    \text{ is among the largest $d$ scores}
  \right\}.
\end{equation*}
Ties are resolved by increasing coordinate index, a fixed deterministic rule.

This rule directly estimates the marginal support. To recover the regression support when active variables are hidden by local dependence, we next remove the linear contribution of the current model.

To target $S_\tau^\beta$, recall that for index sets $A,B\subseteq\{1,\ldots,p_n\}$, $R_{AB,t}$ denotes the submatrix of $R_t$ with rows in $A$ and columns in $B$, and $r_{A,t}$ denotes the subvector of $r_t$ indexed by $A$. For $j\notin A$, $R_{jA,t}$ is the $j$th row of $R_t$ restricted to the columns in $A$. The population partial-residual covariance score is
\begin{equation*}
  \zeta_{j\mid A}
  =r_{j,\tau}-R_{jA,\tau}R_{AA,\tau}^{-1}r_{A,\tau},
\end{equation*}
and its sample analogue is
\begin{equation*}
  \widehat\zeta_{j\mid A}(k)
  =\widehat r_j(k)
  -\widehat R_{jA}(k)
   \widehat R_{AA}(k)^{-1}
   \widehat r_A(k).
\end{equation*}
When $A=\varnothing$, set
\[
  \zeta_{j\mid\varnothing}=r_{j,\tau},
  \qquad
  \widehat\zeta_{j\mid\varnothing}(k)=\widehat r_j(k).
\]
The score is the covariance between the residual of standardized $X_j$ after projection on $X_A$ and the similarly residualized response. We use the covariance numerator rather than the fully normalized partial correlation because it is sufficient for sure screening and avoids requiring a uniform lower bound on every residual variance.

\begin{algorithm}[Pathwise joint-Lepski PR-SISIS]
\label{alg:pr-sisis}
Fix a model-size cap $1\le q_n\le p_n$ and a block size $m_n\ge1$, and set $L_n=\lceil q_n/m_n\rceil$ and $A^{(0)}=\varnothing$. For $s=0,\ldots,L_n-1$:
\begin{enumerate}[label=(\roman*)]
  \item Select $\widehat k_s=\widehat k(A^{(s)})$.
  \item If $A^{(s)}\ne\varnothing$ and $\widehat R_{A^{(s)}A^{(s)}}(\widehat k_s)$ is singular, stop and return $A^{(s)}$. Otherwise, compute $\widehat\zeta_{j\mid A^{(s)}}(\widehat k_s)$ for all $j\notin A^{(s)}$.
  \item Let $\widetilde m_s=\min\{m_n,q_n-|A^{(s)}|\}$ and add the $\widetilde m_s$ largest absolute scores, resolving ties by increasing coordinate index:
  \[
    A^{(s+1)}
    =A^{(s)}\cup
    \left\{
      j:|\widehat\zeta_{j\mid A^{(s)}}(\widehat k_s)|
      \text{ is among the largest $\widetilde m_s$}
    \right\}.
  \]
  \item Stop before the next iteration if $|A^{(s+1)}|=q_n$.
\end{enumerate}
If a singular active block is encountered, the algorithm returns the current
active set. If no such block is encountered, let $A^{\mathrm{fin}}$ denote the
active set when the model-size cap is reached or after the last update, and set
$\widehat S_{\mathrm{PR}}=A^{\mathrm{fin}}$. For the complete ordering used in
the numerical summaries, concatenate the selected blocks in update order and
order variables within each block by decreasing absolute partial-residual score
at that update, using increasing coordinate index to break ties. Then compute
$\widehat k_{\mathrm{fin}}=\widehat k(A^{\mathrm{fin}})$ and append the remaining
coordinates in decreasing order of
$|\widehat\zeta_{j\mid A^{\mathrm{fin}}}(\widehat k_{\mathrm{fin}})|$, again
using increasing coordinate index to break ties.
\end{algorithm}

PR-SISIS retains variables that are marginally visible or become visible after a small number of local projections. A subsequent penalized local regression can then be run on the reduced model.

\begin{remark}
With model cap $q_n$, at most $q_n$ correlation columns are computed and cached. The method therefore involves $O(p_nq_n)$ distinct covariate pairs rather than a dense $p_n\times p_n$ matrix. For fixed $q_n$ and a fixed window grid, a direct implementation is linear in $p_n$. Detailed arithmetic and memory bounds are given in the Supplement.
\end{remark}

\section{Theoretical results}
\label{sec:theory}

The theoretical analysis proceeds in three steps. We first establish a single uniform event controlling the estimated marginal correlation vector and covariate correlation matrix over all candidate windows, and use it to derive sure screening for the marginal support. We then show that the lower bound matches the upper localization and multiplicity order when $\log(p_n/d_n)\asymp\log p_n$. Finally, we use the same joint entrywise event in the partial-residual analysis and establish sure screening for the regression support.

Unless stated otherwise, all conditions below involving random population quantities evaluated at $\tau$ are understood to hold almost surely.

\subsection{Adaptive spot correlations and marginal screening}
\label{subsec:marginal-theory}

Let
\[
  \Sigma_t
  =\frac{\dd\langle Z^c\rangle_t}{\dd t}
  =
  \begin{pmatrix}
    C_t&c_t\\
    c_t^\top&c_{YY,t}
  \end{pmatrix}.
\]
To obtain uniform truncation bounds over the growing coordinate panel, we control the local drift, volatility, and finite-variation jump activity uniformly across coordinates. 
\begin{assumption}
\label{ass:jump}
Let $I_n^{\max}=[\tau_n,\tau_n+k_{\max}\Delta_n]$, and 
\begin{equation*}
  b'_t=b_t-\int_{\{\bnorm{\delta(t,z)}_\infty\le1\}}
  \delta(t,z)\lambda(\dd z).
\end{equation*}
There exist a constant $K<\infty$, an activity index $r_0\in[0,1)$, and a nonnegative measurable envelope $\Gamma:\mathcal E\to[0,\infty)$, independent of $n$, time, and the coordinate index, such that almost surely on $I_n^{\max}$,
\begin{equation*}
  \max_{a\le\bar p_n}\{|b_{a,t}|+|b'_{a,t}|\}
  +\max_{a\le\bar p_n}\Sigma_{aa,t}\le K,
\end{equation*}
and
\begin{equation*}
  \sup_{a\le\bar p_n}
  \left(|\delta_a(t,z)|^{r_0}\ind_{\{\delta_a(t,z)\ne0\}}\wedge1\right)
  \le\Gamma(z),
  \qquad
  \int_{\mathcal E}\Gamma(z)\lambda(\dd z)\le K.
\end{equation*}
For $r_0=0$, the expression $|u|^{r_0}\ind_{\{u\ne0\}}$ is interpreted as $\ind_{\{u\ne0\}}$.
\end{assumption}

This dimension-uniform finite-variation condition is in the spirit of the truncation framework of \cite{JacodRosenbaum2013}. For the proof, define the jump-free continuous semimartingale
\begin{equation*}
  Z'_t=Z_0+\int_0^t b'_s\,\dd s+Z_t^c.
\end{equation*}

The next condition separates localization bias from stochastic error. It controls local covariance averages over every candidate window and keeps the corresponding marginal variances uniformly nondegenerate.
\begin{assumption}
\label{ass:local-characteristics}
There exist constants $\alpha\in(0,1]$, $0<L_\Sigma<\infty$, and $0<c_-<c_+<\infty$, and events $\mathcal G_n$ with $\Pp(\mathcal G_n)\to1$, such that on $\mathcal G_n$, for every $k\in\calK_n$,
  \begin{equation}
    \max_{1\le a,b\le\bar p_n}
    \left|
      \frac{1}{k\Delta_n}
      \int_{\tau_n}^{\tau_n+k\Delta_n}\Sigma_{ab,s}\,\dd s
      -\Sigma_{ab,\tau}
    \right|
    \le L_\Sigma(k\Delta_n)^\alpha,
    \label{eq:average-holder}
  \end{equation}
and
\begin{equation}
  0<c_-\le\min_{a\le\bar p_n}\Sigma_{aa,t}
  \le\max_{a\le\bar p_n}\Sigma_{aa,t}\le c_+<\infty
  \qquad\text{on }I_n^{\max}.
  \label{eq:variance-bounds}
\end{equation}
\end{assumption}

After adjusting $c_-$ and $c_+$ by fixed factors, \eqref{eq:variance-bounds} also holds at $t=\tau$. Assumption~\ref{ass:local-characteristics} controls the local average of the covariance process. For diffusion-type covariance factors, any fixed $\alpha<1/2$ is natural under suitable local bounds, whereas smoother paths allow larger $\alpha$. A factor-type sufficient condition is given in Section~2 of the Supplement.

Define the infeasible jump-free covariance estimator and its conditional target by
\begin{equation*}
  \widehat\Sigma'(k)=\frac{1}{k\Delta_n}\sum_{i\in I_{n,k}(\tau)}\Delta_i^nZ'(\Delta_i^nZ')^\top,
  \qquad
  \overline\Sigma(k)=\frac{1}{k\Delta_n}\int_{\tau_n}^{\tau_n+k\Delta_n}\Sigma_s\,\dd s.
\end{equation*}
The truncation reduction established in the proof of Theorem~\ref{thm:joint-adaptation} in the Supplementary Material \cite{ZhuSupplement2026} states that, under Assumption~\ref{ass:jump}, \eqref{eq:threshold-range}, and \eqref{eq:grid-compatibility},
\begin{equation*}
  \sup_{k\in\calK_n}
  \frac{\bnorm{\widehat\Sigma^{\mathrm{tr}}(k)-\widehat\Sigma'(k)}_{\max}}
  {a_{n,k}(x)+n^{-1/2}}
  =o_{\Pp}(1).
\end{equation*}
All subsequent stochastic bounds are therefore proved for the jump-free process, since truncation transfers them to the feasible estimator. The proof of Theorem~\ref{thm:joint-adaptation} in the Supplementary Material follows the decomposition
\begin{equation*}
\begin{aligned}
  \widehat\Sigma^{\mathrm{tr}}(k)-\Sigma_\tau
  ={}&\{\widehat\Sigma^{\mathrm{tr}}(k)-\widehat\Sigma'(k)\}
  +\{\widehat\Sigma'(k)-\overline\Sigma(k)\}\\
  &+\{\overline\Sigma(k)-\Sigma_\tau\}.
\end{aligned}
\end{equation*}
The terms are, in order, jump and truncation elimination, sampling error for the continuous semimartingale, and localization bias.

On the event $\|\widehat\Sigma^{\mathrm{tr}}(k)-\Sigma_\tau\|_{\max}<c_-/2$ and for $v_n<c_-/2$, the variance floor is inactive. The deterministic correlation map then gives
\begin{equation*}
  \bnorm{\widehat{\mathcal R}(k)-\mathcal R_\tau}_{\max}
  \le C\bnorm{\widehat\Sigma^{\mathrm{tr}}(k)-\Sigma_\tau}_{\max}.
\end{equation*}

We define
\begin{equation*}
\begin{aligned}
  \delta_{n,k}(x)
  &=a_{n,k}(x)
  +\left(\frac{k}{n}\right)^\alpha
  +n^{-1/2},\\
  \varepsilon_n(x)
  &=\inf_{k\in\calK_n}
   \left\{
     a_{n,k}(x)+\left(\frac{k}{n}\right)^\alpha
   \right\}
   +n^{-1/2}.
\end{aligned}
\end{equation*}
The preceding assumptions permit a common decomposition into truncation, sampling, and localization errors. The following theorem gives a uniform fixed-window bound for both correlation objects needed by the screening procedures.

\begin{theorem}\label{thm:joint-adaptation}
For any deterministic sequence $x=x_n\ge0$ satisfying \eqref{eq:grid-compatibility}, suppose the scale condition \eqref{eq:scale-bounds}, Assumptions~\ref{ass:jump} and \ref{ass:local-characteristics}, and \eqref{eq:threshold-range} hold, $v_n<c_-/2$ for all sufficiently large $n$, and
\begin{equation}
  a_{n,k_{\min}}(x)
  +\left(\frac{k_{\max}}{n}\right)^\alpha
  +n^{-1/2}
  \le c_0
  \label{eq:grid-condition}
\end{equation}
for a sufficiently small constant $c_0$. Then there exist constants $C,c>0$ such that, with probability at least $1-Ce^{-cx}-o(1)$,
\begin{equation*}
  \bnorm{\widehat r(k)-r_\tau}_\infty
  \veeop
  \bnorm{\widehat R(k)-R_\tau}_{\max}
  \le C\delta_{n,k}(x)
  \quad\text{for every }k\in\calK_n.
\end{equation*}
\end{theorem}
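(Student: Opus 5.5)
We follow the three-term decomposition displayed before the theorem and bound each term uniformly over $k\in\calK_n$ and all $\bar p_n^2$ entries. Then we pass from covariance to correlation through the deterministic Lipschitz property of the correlation map. Throughout we work on $\mathcal H_n\cap\mathcal G_n$, which has probability tending to one. This event contributes the $o(1)$ in the probability bound.

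\emph{Step 1 (truncation).} We treat the jump--truncation remainder by the reduction quoted before the theorem. Split each increment as $\Delta_i^nZ_a=\Delta_i^nZ'_a+\Delta_i^nJ_a$.
\begin{itemize}
\item For the continuous part, the first half of \eqref{eq:grid-compatibility} is used. A Gaussian-type maximal inequality for $\Delta_i^nZ'_a$ (bounded drift and $\Sigma_{aa}\le K$) shows that $\max_{a,i}|\Delta_i^nZ'_a|\le v_{-,n}/2$ with probability $1-o(1)$, because the exponent $\vartheta_n^2/\Delta_n\asymp n^{1-2\varpi}$ dominates $\ell_n(x)$.
\item For the jump part, Assumption~\ref{ass:jump} with the envelope $\Gamma$ gives the standard finite-variation bounds: $\E\bigl(|\Delta_i^nJ_a|\wedge\vartheta_n\bigr)^2\lesssim\Delta_n\vartheta_n^{2-r_0}$ and cross terms of order $\Delta_n^{1/2}\cdot\Delta_n^{1/2}\vartheta_n^{1-r_0/2}$. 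These bounds hold uniformly in $a$, because the envelope does not depend on the coordinate.
\end{itemize}
Summing over a window, normalizing by $k\Delta_n$, and applying a union/maximal argument over the grid (this is where the $|\calK_n|$ factors in $\rho_n^{\mathrm{jump}}$ come from) gives
\[
\sup_k\frac{\|\widehat\Sigma^{\mathrm{tr}}(k)-\widehat\Sigma'(k)\|_{\max}}{a_{n,k}+n^{-1/2}}=o_\Pp(1).
\]
This follows from the second half of \eqref{eq:grid-compatibility}.

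\emph{Step 2 (sampling error).} This is the main step. For fixed $(a,b,k)$, write $\Delta_i^nZ'_a\Delta_i^nZ'_b-\int_{t_{i-1}}^{t_i}\Sigma_{ab,s}\,ds$ as a martingale difference plus a drift remainder.
\begin{itemize}
\item Itô's formula gives $\Delta_iZ^c_a\Delta_iZ^c_b-\int\Sigma_{ab}=\int(Z^c_{a,s}-Z^c_{a,t_{i-1}})\,dZ^c_{b,s}+(a\leftrightarrow b)$. Conditionally on $\calF_{t_{i-1}}$, this is sub-exponential with scale $\lesssim\Delta_n$, since the quadratic variations are bounded by $K$.
\item Drift cross terms are $O(\Delta_n^{3/2})$ per increment, so after normalization they contribute $O(n^{-1/2})$. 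This is the origin of the $n^{-1/2}$ term.
\end{itemize}
Apply a Bernstein inequality for sums of martingale differences with conditional sub-exponential (Orlicz-$\psi_1$) control, using $k$ terms normalized by $k\Delta_n$. This gives the tail bound
\[
\Pp\Bigl(|\widehat\Sigma'_{ab}(k)-\overline\Sigma_{ab}(k)|>C\bigl(\sqrt{u/k}+u/k\bigr)+Cn^{-1/2}\Bigr)\le 2e^{-u}.
\]
Take $u=\Lambda_n(x)$ and union-bound over $\bar p_n^2|\calK_n|$ triples. Since $e^{-\Lambda_n(x)}\bar p_n^2|\calK_n|\le e^{-x}/2$, the total failure probability is $\le Ce^{-cx}$.

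The obstacle is obtaining a conditional Bernstein bound with constants uniform in $(a,b)$ while only almost-sure bounds on $\Sigma$ hold on the window. We would handle this by localizing: stop the processes when the bounds fail, which is harmless on $\mathcal G_n$. We would then bound the conditional moment generating function of the stochastic integrals via exponential martingale inequalities (for example, BDG moment growth $\E|\cdot|^m\le (Cm)^m\Delta_n^m$).

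\emph{Step 3 (bias and correlation map).} The localization term $\|\overline\Sigma(k)-\Sigma_\tau\|_{\max}\le L_\Sigma(k/n)^\alpha$ is exactly \eqref{eq:average-holder}. Here we note that $\tau_n-\tau\le\Delta_n$, which is absorbed using right-continuity and the form of the assumption. Combining Steps 1–3 gives $\|\widehat\Sigma^{\mathrm{tr}}(k)-\Sigma_\tau\|_{\max}\le C\delta_{n,k}(x)$ for all $k$.

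By \eqref{eq:grid-condition} with $c_0$ small, and since $\delta_{n,k}$ is maximized at $k_{\min}$ or $k_{\max}$, this error is below $c_-/2$. Hence the diagonal entries are at least $c_-/2$, the floor $v_n$ is inactive, and the map $\Sigma\mapsto V^{-1/2}\Sigma V^{-1/2}$ is Lipschitz in the max norm on $\{\Sigma_{aa}\in[c_-/2,2c_+]\}$. Concretely, $|\hat\Sigma_{ab}/\sqrt{\hat\Sigma_{aa}\hat\Sigma_{bb}}-\Sigma_{ab}/\sqrt{\Sigma_{aa}\Sigma_{bb}}|\le C\max(|\Delta_{ab}|,|\Delta_{aa}|,|\Delta_{bb}|)$. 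Reading off the blocks $\widehat r(k)$ and $\widehat R(k)$ then yields the claim.
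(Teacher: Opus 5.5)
\paragraph{Verdict: gap in Step 1.}
Your Steps 2 and 3 follow the paper's proof closely:
\begin{itemize}
\item integration by parts for the product of continuous increments;
\item a moment-based martingale Bernstein bound, with a union over the $\bar p_n^2|\calK_n|$ triples at level $\Lambda_n(x)$;
\item the averaged H\"older bias, which is already stated from $\tau_n$ with target $\Sigma_\tau$, so nothing needs to be absorbed;
\item a Lipschitz correlation map once all diagonals exceed $c_-/2$.
\end{itemize}
The gap is in Step 1, in passing from entrywise control to max-norm control of the truncation remainder. You control the jump and truncation terms only through moment bounds such as $\sup_a\E(|\Delta_i^nJ_a|\wedge\vartheta_n)^2\lesssim\Delta_n\vartheta_n^{2-r_0}$, and these lead only to Markov-type tails. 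Uniformity in $a$ controls $\sup_{a,b}\E|\cdot|$, not $\E\sup_{a,b}|\cdot|$. Turning these bounds into a bound on $\max_{a,b}$ by Markov plus a union over entries therefore costs a factor $\bar p_n^2$. Condition \eqref{eq:grid-compatibility} does not pay for this factor: $\rho_n^{\mathrm{jump}}$ carries only $|\calK_n|$ factors and no $p_n$ factor, while $\log p_n$ may grow polynomially in $n$. Your ``union/maximal argument over the grid'' accounts for the windows, but nothing in the proposal accounts for the coordinates.

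\paragraph{The missing idea.}
You need a pathwise, coordinate-free domination of all jump effects; this is what the coordinate independence of $\Gamma$ is really for.
\begin{itemize}
\item The paper sets $\gamma(t,z)=\sup_a(|\delta_a(t,z)|\wedge1)$, so that $\gamma^{r_0}\ind_{\{\gamma\ne0\}}\le\Gamma$.
\item From the single measure $\mu$, with $v=v_{-,n}$, it builds three quantities, none of which depends on $a$: a large-jump count $N_i(v)$ over marks with $\gamma>v/8$; a small-jump variation $S_i(v)=\int_{I_i^n}\int_{\mathcal E}\gamma\ind_{\{\gamma\le v/8\}}\,\dd\mu$; and a bad-increment indicator $B_i(v)=\ind_{\{N_i(v)>0\}\cup\{S_i(v)>v/4\}}$.
\item On the event $\max_{a,i}|\Delta_i^nZ'_a|\le v/4$ and on a good increment, no coordinate is truncated, and $|\Delta_i^nJ_a|\le S_i(v)$ for all $a$ simultaneously. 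On a bad increment, both products are $O(v_{+,n}^2)$.
\item Hence $D_{i,ab}=\widetilde\Delta_i^nZ_a\widetilde\Delta_i^nZ_b-\Delta_i^nZ'_a\Delta_i^nZ'_b$ satisfies $|D_{i,ab}|\le C(|\Delta_i^nZ'_a|+|\Delta_i^nZ'_b|)S_i(v)+CS_i(v)^2+Cv_{+,n}^2B_i(v)$. The coordinate index now enters only through the continuous increments.
\item After summing over $i$, Cauchy--Schwarz isolates $\max_a(k\Delta_n)^{-1}\sum_i(\Delta_i^nZ'_a)^2$. This is $O(1)$ with exponential tails by Bernstein, so the union over $a$ is affordable.
\item The remaining coordinate-free sums of $S_i(v)^2$ and $B_i(v)$ are handled by $\E_{i-1}^nS_i(v)^2\lesssim\Delta_nv^{2-r_0}+\Delta_n^2v^{2-2r_0}$ and $\E_{i-1}^nB_i(v)\lesssim\Delta_nv^{-r_0}$, with Markov and a union over $\calK_n$ only. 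This produces exactly $\rho_n^{\mathrm{jump}}$.
\end{itemize}
With this replacement for your per-coordinate moment bounds, the rest of your plan goes through.

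\paragraph{A milder version of the same issue.}
Your drift cross terms are not $O(\Delta_n^{3/2})$ pathwise. The paper obtains the $n^{-1/2}$ term uniformly in $b$ by Cauchy--Schwarz against the same exponentially controlled quadratic averages.
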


The fixed-window event can be transferred to every window chosen along a data-dependent active-set path. The corollary records this simultaneous adaptive form, which will be the common input to the two screening arguments.
\begin{corollary}
\label{cor:pathwise-lepski}
Under the conditions of Theorem~\ref{thm:joint-adaptation}, suppose that $C_{\mathrm L}$ in \eqref{eq:lepski-selector} is larger than a fixed constant determined by the fixed-window bound and grid-comparison factors, and that the grid contains a point within a fixed factor of the bias--variance balance. Then, on the same event as Theorem~\ref{thm:joint-adaptation}, simultaneously for every $A\subseteq\{1,\ldots,p_n\}$ with $|A|\le q_n$,
\begin{equation*}
  \bnorm{\widehat r\{\widehat k(A)\}-r_\tau}_\infty
  \veeop
  \bnorm{\widehat R_{\cdot A}\{\widehat k(A)\}-R_{\cdot A,\tau}}_{\max}
  \le C\varepsilon_n(x).
\end{equation*}
\end{corollary}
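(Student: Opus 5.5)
We prove the corollary by the standard Lepski oracle-comparison argument, carried out deterministically on the event $\mathcal E_n$ of Theorem~\ref{thm:joint-adaptation}. On that event, for every $k\in\calK_n$,
\[
\bnorm{\widehat r(k)-r_\tau}_\infty\veeop\bnorm{\widehat R(k)-R_\tau}_{\max}\le C_1\{a_{n,k}(x)+b_k\},
\qquad b_k=(k/n)^\alpha+n^{-1/2}.
\]
Since the bound holds for the full matrix $\widehat R(k)$, it holds for every column block $\widehat R_{\cdot A}(k)$ at once. The subsequent argument uses nothing else that depends on $A$, so simultaneity over all $A$ with $|A|\le q_n$ (indeed over all $A$) comes for free and no union bound over active sets is needed.

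Define the oracle grid point
\[
k^\ast=\max\{k\in\calK_n: b_k\le a_{n,k}(x)\}.
\]
This set is nonempty when the grid starts below the balance point, which is the assumed grid-balance hypothesis. The map $k\mapsto a_{n,k}$ is decreasing and $k\mapsto b_k$ is increasing, and consecutive grid ratios lie in $[\rho_-,\rho_+]$. Hence $a_{n,k^\ast}+b_{k^\ast}\le C_2\varepsilon_n(x)$, with $C_2$ depending only on $\rho_+$ and $\alpha$. The key comparison is as follows: at the next grid point $k^\ast_+$ the variance term $a$ changes by at most a factor $\rho_+$, and the bias term $b$ exceeds $a$ there, so the infimum defining $\varepsilon_n(x)$ is comparable to $a_{n,k^\ast}$.

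Next we show $\widehat k(A)\ge k^\ast$. Take any $\ell\le k^\ast$ in $\calK_n$. By the triangle inequality,
\[
d_{k^\ast\ell}(A)\le C_1(a_{n,k^\ast}+b_{k^\ast})+C_1(a_{n,\ell}+b_\ell)\le 2C_1(a_{n,k^\ast}+a_{n,\ell}),
\]
using $b_\ell\le b_{k^\ast}\le a_{n,k^\ast}$. Hence $k^\ast$ is admissible whenever $C_{\mathrm L}\ge 2C_1$, and the maximum in \eqref{eq:lepski-selector} is at least $k^\ast$. Now set $\widehat k=\widehat k(A)\ge k^\ast$. Admissibility of $\widehat k$, applied with $\ell=k^\ast$, gives $d_{\widehat k k^\ast}(A)\le C_{\mathrm L}(a_{n,\widehat k}+a_{n,k^\ast})\le 2C_{\mathrm L}a_{n,k^\ast}$, since $a$ is decreasing. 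Combining this with the fixed-window bound at $k^\ast$,
\[
\bnorm{\widehat r(\widehat k)-r_\tau}_\infty\veeop\bnorm{\widehat R_{\cdot A}(\widehat k)-R_{\cdot A,\tau}}_{\max}
\le d_{\widehat kk^\ast}(A)+C_1(a_{n,k^\ast}+b_{k^\ast})\le (2C_{\mathrm L}+2C_1)a_{n,k^\ast}\le C\varepsilon_n(x).
\]
Here both coordinates of the $\veeop$ are handled together, because $d_{k\ell}(A)$ is defined as the maximum of the two discrepancies.

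The only step needing care is the oracle comparability $a_{n,k^\ast}\lesssim\varepsilon_n(x)$. This requires showing that the infimum over the grid of $a_{n,k}+(k/n)^\alpha$ is attained within a bounded factor of $k^\ast$. If $k^\ast=k_{\max}$, we use $a_{n,k_{\max}}\le a_{n,k}$ for all $k$ directly. Otherwise the case split is: for $k\le k^\ast$, $a_{n,k}\ge a_{n,k^\ast}$; for $k\ge k^\ast_+$, $b_k\ge b_{k^\ast_+}>a_{n,k^\ast_+}\ge \rho_+^{-1}a_{n,k^\ast}$, where the last step uses $a_{n,k}\propto$ at most $k^{-1/2}$ plus $k^{-1}$ terms. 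The additive $n^{-1/2}$ is present in both quantities and causes no difficulty.
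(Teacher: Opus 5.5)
Your proof is correct in its main line and shares the paper's skeleton. Everything is deterministic on the event of Theorem~\ref{thm:joint-adaptation}, and simultaneity over $A$ comes from restricting the full entrywise bound. An oracle window is shown admissible (your $C_{\mathrm L}\ge 2C_1$ plays the role of the paper's $C_{\mathrm L}\ge 2C_0(1+\kappa_0)$), and the final bound passes through that window by the triangle inequality.

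Where you differ is in comparing the oracle radius with $\varepsilon_n(x)$. The paper fixes the explicit balance $k_n^\star=n^{2\alpha/(2\alpha+1)}\Lambda_n(x)^{1/(2\alpha+1)}$ and the rate $\rho_n=\{\Lambda_n(x)/n\}^{\alpha/(2\alpha+1)}$. It then takes a grid point $k_0$ within a fixed factor of $k_n^\star$, chooses a constant $\kappa_0$, and defines $k^\circ=\max\{k:\beta_k\le\kappa_0 s_k\}\ge k_0$. It bounds $s_{k^\circ}\le s_{k_0}\le C\rho_n$ and separately shows $\inf_k(s_k+\beta_k)\ge c(\rho_n+n^{-1/2})$ by comparing each $k$ with $k_n^\star$. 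You instead obtain $a_{n,k^\ast}(x)\le\rho_+\varepsilon_n(x)$ directly from the monotonicity of $a_{n,k}(x)$ and $b_k$ and the grid-ratio bound. That is a genuine oracle inequality and never uses the explicit rate or the location of $k_n^\star$. In exchange, the paper's route also yields the identification $\varepsilon_n(x)\asymp\rho_n+n^{-1/2}$ stated after the corollary.

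One step needs repair. Nonemptiness of $\{k\in\mathcal K_n:b_k\le a_{n,k}(x)\}$ does not follow from the stated grid hypothesis. A grid point within a fixed factor of the balance can have bias exceeding variance by a constant factor. For instance, $k_{\min}=2k_n^\star$ gives $b_{k_{\min}}\ge 2^\alpha\rho_n$ while $a_{n,k_{\min}}(x)=2^{-1/2}\rho_n+\rho_n^2/2$. Since $b_k$ increases and $a_{n,k}(x)$ decreases, your set is then empty; this is exactly why the paper introduces $\kappa_0$.

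Within your framework the fix is one line. If $b_{k_{\min}}>a_{n,k_{\min}}(x)$, take $k^\ast=k_{\min}$, which is always admissible. Because $b_k$ is increasing,
$a_{n,k_{\min}}(x)<b_{k_{\min}}\le\inf_{k}\{a_{n,k}(x)+b_k\}=\varepsilon_n(x)$.
Your same chain then gives the bound $(2C_{\mathrm L}+2C_1)\varepsilon_n(x)$. With this case added, your argument does not need the grid-balance hypothesis at all.
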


Since the same event holds for all $A$, the result is simultaneous over data-dependent active sets. The selector and its stochastic radii do not use $\alpha$. For each fixed $\alpha$ satisfying Assumption~\ref{ass:local-characteristics}, the rate follows when the grid covers the corresponding bias--variance balance.

When $\Lambda_n(x)=o(n)$ and the grid is sufficiently rich, we have
\begin{equation*}
  \varepsilon_n(x)
  \asymp
  \left\{\frac{\Lambda_n(x)}{n}\right\}^{\alpha/(2\alpha+1)},
  \qquad
  k_n^\star
  \asymp
  n^{2\alpha/(2\alpha+1)}\Lambda_n(x)^{1/(2\alpha+1)},
\end{equation*}
up to the displayed $n^{-1/2}$ term. Under the grid conditions in \eqref{eq:grid-compatibility}, finite-variation jumps do not change this first-order boundary.

To visualize the relation among all variables growing with $n$, i.e., $p_n$, $k_n$, $q_n$, and $d_n$, and their effects on the theory, let $\log p_n\asymp n^\gamma$, $k_n\asymp n^\kappa$, $q_n\asymp n^\xi$, and $d_n=p_n^\delta$, with $0\le\delta<1$. Figure~\ref{fig:parameter-theory} provides a compact exponent-based summary of the main theoretical tradeoffs. The formulas are valid for every $\alpha\in(0,1]$ allowed by Assumption~\ref{ass:local-characteristics}. The curves use the boundary value $\alpha=1/2$ as an illustrative benchmark. Generic diffusion-driven covariance paths correspond to any fixed $\alpha<1/2$ under the stated local regularity.

\begin{figure}[!ht]
\centering
\begin{tikzpicture}
\def\alphavis{0.5}
\begin{groupplot}[group style={group size=3 by 1,horizontal sep=0.85cm},width=0.265\textwidth,height=0.215\textwidth,scale only axis,axis lines=left,tick label style={font=\scriptsize},label style={font=\scriptsize},title style={font=\scriptsize},every axis plot/.append style={line width=0.9pt},clip=false]
\nextgroupplot[title={(a) Dimension--localization},xlabel={$\gamma$},ylabel={$\kappa$},xmin=0,xmax=1,ymin=0,ymax=1,xtick={0,0.5,1},ytick={0,0.5,1}]
\addplot[densely dashed,black,domain=0:1] {x};
\addplot[black,thick,domain=0:1] {(2*\alphavis+x)/(2*\alphavis+1)};
\node[font=\tiny,anchor=north west,align=left] at (axis description cs:0.03,0.97) {$\kappa^\star(\gamma)=\dfrac{2\alpha+\gamma}{2\alpha+1}$};
\node[font=\tiny] at (axis cs:0.18,0.79) {bias};
\node[font=\tiny] at (axis cs:0.46,0.60) {sampling};
\node[font=\tiny] at (axis cs:0.68,0.18) {sampling failure};
\nextgroupplot[title={(b) PR stability},xlabel={$\gamma$},ylabel={$\xi$},xmin=0,xmax=1,ymin=0,ymax=0.30,xtick={0,0.5,1},ytick={0,0.15,0.30},yticklabel style={/pgf/number format/fixed,/pgf/number format/precision=2,font=\scriptsize}]
\addplot[black,thick,domain=0:1] {\alphavis*(1-x)/(2*\alphavis+1)};
\node[font=\tiny,anchor=north west,align=left] at (axis description cs:0.03,0.97) {$\xi=\dfrac{\alpha(1-\gamma)}{2\alpha+1}$};
\node[font=\tiny] at (axis cs:0.62,0.15) {unstable};
\node[font=\tiny] at (axis cs:0.42,0.06) {stable};
\nextgroupplot[title={(c) Retained-list effect},xlabel={$\delta$},ylabel={rel.\ factor},ylabel style={font=\scriptsize},xmin=0,xmax=1,ymin=0,ymax=1.05,xtick={0,0.5,1},ytick={0,0.5,1}]
\addplot[black,thick,domain=0:1,samples=200,variable=\t] ({1-\t^4},{\t});
\node[font=\tiny,align=center] at (axis cs:0.50,0.31) {$(1-\delta)^{\alpha/(2\alpha+1)}$};
\end{groupplot}
\end{tikzpicture}
\caption{Illustrative exponent regions under $\log p_n\asymp n^\gamma$, $k_n\asymp n^\kappa$, $q_n\asymp n^\xi$, and $d_n=p_n^\delta$. Panel (a) shows the dimension--localization balance. Panel (b) shows the partial-residual stability boundary. Panel (c) shows the retained-list effect. The curves use $\alpha=1/2$.}
\label{fig:parameter-theory}
\end{figure}

In Figure~\ref{fig:parameter-theory}, Panel (a) separates insufficient local sample size, sampling-dominated windows, and bias-dominated windows. The dashed line is $\kappa=\gamma$, and the solid line is $\kappa^\star(\gamma)=(2\alpha+\gamma)/(2\alpha+1)$. The admissible windows are further restricted by the jump-truncation conditions in \eqref{eq:grid-compatibility}. In particular, they require $\gamma<1-2\varpi$ and, when $\kappa>\gamma$, $\kappa-\gamma<\varpi(2-r_0)$ up to logarithmic factors. At the oracle balance, the latter becomes $2\alpha(1-\gamma)/(2\alpha+1)<\varpi(2-r_0)$. Panel (b) shows the boundary $\xi=\alpha(1-\gamma)/(2\alpha+1)$ for the PR-SISIS model cap. This small-model regime also controls the number of active correlation columns required computationally, and Theorem~\ref{thm:pr-sisis} additionally requires $q_n\le k_{\min}$. Panel (c) shows the relative lower-bound factor $(1-\delta)^{\alpha/(2\alpha+1)}$ for $0\le\delta<1$. Its endpoint at $\delta=1$ indicates limiting behavior only. Retaining a larger list lowers the minimax multiplicity boundary at the cost of weaker dimension reduction. As $\alpha$ increases, the boundaries in Panels (a) and (b) move upward, reflecting that smoother covariance paths allow larger localization windows and larger models, subject to the same jump-truncation restrictions.

We first apply Corollary \ref{cor:pathwise-lepski} at $A=\varnothing$, which gives the adaptive estimator used by marginal screening.

Let
\[
  a_m=\min_{j\in S_\tau^m}|r_{j,\tau}|,
  \qquad
  B_m=\bnorm{r_\tau}_2^2,
\]
with $a_m=\infty$ if $S_\tau^m$ is empty. The first screening consequence concerns the marginal support. It combines the adaptive error bound with an energy argument that limits how many coordinates can outrank the weakest active marginal score.

\begin{theorem}\label{thm:marginal-screening}
Suppose the conditions of Corollary~\ref{cor:pathwise-lepski} hold, and let $1\le d_n\le p_n$ be an integer. If $a_m\ge C\varepsilon_n(x)$ and $d_n\ge\frac{4B_m}{a_m^2}$, we have
\begin{equation}
  \Pp\{S_\tau^m\subseteq\widehat S_m(d_n)\}
  \ge1-Ce^{-cx}-o(1).
  \label{eq:marginal-sure}
\end{equation}
\end{theorem}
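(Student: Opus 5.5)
The plan is to work entirely on the event $\mathcal E_n$ of Theorem~\ref{thm:joint-adaptation}, which has probability at least $1-Ce^{-cx}-o(1)$. By Corollary~\ref{cor:pathwise-lepski} with $A=\varnothing$, on this event
\[
\bnorm{\widehat r(\widehat k_{\mathrm m})-r_\tau}_\infty\le C_1\varepsilon_n(x)=:\eta_n .
\]
Take the constant $C$ in the signal condition $a_m\ge C\varepsilon_n(x)$ to be at least $2C_1$ (slightly more if a strict margin is convenient), so that $\eta_n\le a_m/2$. We then show deterministically that, on $\mathcal E_n$, every $j\in S_\tau^m$ is among the $d_n$ largest values of $|\widehat r_j(\widehat k_{\mathrm m})|$. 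If $S_\tau^m$ is empty, there is nothing to prove.

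For the lower bound on active scores, fix $j\in S_\tau^m$. The triangle inequality gives
\[
|\widehat r_j(\widehat k_{\mathrm m})|\ge a_m-\eta_n\ge a_m/2 .
\]
For the counting step, consider any index $l$ that is not ranked below $j$, meaning $|\widehat r_l|>|\widehat r_j|$, or equality with $l<j$ under the tie rule. Any such $l$ satisfies $|\widehat r_l|\ge a_m/2$, and hence
\[
|r_{l,\tau}|\ge a_m/2-\eta_n .
\]
This is where the energy argument enters. Each such $l$ contributes at least $(a_m/2-\eta_n)^2$ to $B_m=\bnorm{r_\tau}_2^2$, so the number of them is at most $B_m/(a_m/2-\eta_n)^2$. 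The margins must be arranged so that this count, together with $j$ itself, is at most $d_n$ whenever $d_n\ge 4B_m/a_m^2$.

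The main obstacle is that constant bookkeeping. With $\eta_n\le a_m/2$ alone, the bound $a_m/2-\eta_n$ could be zero. The cleaner route is to compare against $j$'s own population value. Any $l$ that outranks $j$ satisfies
\[
|r_{l,\tau}|\ge|\widehat r_l|-\eta_n\ge|\widehat r_j|-\eta_n\ge a_m-2\eta_n .
\]
Choosing $C$ so that $2\eta_n\le(1-1/\sqrt2)\,a_m$ gives $|r_{l,\tau}|^2\ge a_m^2/2$. The same bound holds for $j$ itself, since $|r_{j,\tau}|\ge a_m$. Hence the set of indices ranked at or above $j$ has cardinality at most
\[
B_m/(a_m^2/2)=2B_m/a_m^2\le d_n .
\]
Therefore $j$ lies in $\widehat S_m(d_n)$. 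Since this holds for every $j\in S_\tau^m$ simultaneously on $\mathcal E_n$, we obtain $S_\tau^m\subseteq\widehat S_m(d_n)$ on that event, which gives \eqref{eq:marginal-sure}.
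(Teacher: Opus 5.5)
Your proof is correct and follows the paper's argument. On the Corollary~\ref{cor:pathwise-lepski} event at $A=\varnothing$, any coordinate ranked at or above an active $j$ has estimated score at least $|\widehat r_j(\widehat k_{\mathrm m})|$, hence a population score bounded below, and the energy $B_m$ caps how many such coordinates there can be. The only difference is the constants: the paper uses error radius $a_m/4$, giving population threshold $a_m/2$ and count $4B_m/a_m^2$. Your tighter radius gives threshold $a_m/\sqrt2$ and count $2B_m/a_m^2$, at the cost of a larger constant in $a_m\ge C\varepsilon_n(x)$.
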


The condition for $a_m$ implies that the weakest marginal signal must dominate the adaptive spot-correlation error. The condition for $d_n$ has a simple counting interpretation. At most $4B_m/a_m^2$ coordinates can have population marginal scores exceeding $a_m/2$, so a list of that size cannot exclude a truly marginally active variable on the concentration event.

A simple sufficient regime is obtained by writing $s_m=|S_\tau^m|$. If $\max_{j\in S_\tau^m}|r_{j,\tau}|\le C_ma_m$, then $B_m\le C_m^2s_ma_m^2$, and hence $d_n\ge4C_m^2s_m$ is sufficient for the list-size condition. In particular, if $x=x_n\to\infty$ while the stated conditions remain satisfied, the probability in \eqref{eq:marginal-sure} tends to one.

The theorem concerns $S_\tau^m$. It screens the beta support under the additional marginal-signal condition
\[
  \min_{j\in S_\tau^\beta}|r_{j,\tau}|
  \ge C\varepsilon_n(x).
\]
Example~\ref{ex:population} shows why this condition cannot be omitted. An active regression coordinate may have a marginal score below the screening boundary because of local cancellation. Regression-support recovery without this condition is treated in Subsection~\ref{subsec:beta-theory}.

\subsection{List-size minimax lower bound}

Theorem~\ref{thm:marginal-screening} gives an upper signal requirement for the marginal procedure. To determine whether weaker localized signals could be detected by another procedure, we next construct Gaussian alternatives that retain the localization and multiplicity costs after the list-size constraint is imposed.

For a bounded function \(f:\R\to\R\), define its \(\alpha\)-H\"older seminorm by
\[
  [f]_\alpha
  =
  \sup_{\substack{s,t\in\R\\s\ne t}}
  \frac{|f(t)-f(s)|}{|t-s|^\alpha}.
\]
Let \(\psi:\R\to\R\) be an \(\alpha\)-H\"older function satisfying
\begin{equation*}
  \supp(\psi)\subset[-1,1],
  \qquad
  \psi(0)=1,
  \qquad
  \bnorm{\psi}_\infty\le1,
  \qquad
  [\psi]_\alpha\le1.
\end{equation*}
Such a function exists for every \(\alpha\in(0,1]\), e.g., one may take \(\psi(u)=\max\{1-|u|,0\}\). For \(0<a\le1/2\), set
\[
  h_a
  =
  2\left(\frac{a}{L_\Sigma}\right)^{1/\alpha},
  \qquad
  f_{j,a}(t)
  =
  a\psi\left(\frac{t-\tau}{h_a}\right).
\]

For \(j\in\{1,\ldots,p_n\}\), let \(e_j\in\R^{p_n}\) denote the \(j\)th standard basis vector, and let \(P_{j,a}^{(n)}\) denote the law of the discrete observations
\[
  \mathcal O_n
  =
  \{(X_{t_i^n},Y_{t_i^n}):i=0,\ldots,n\}
\]
under the model
\begin{equation*}
  \dd X_t=\dd B_t^X,
  \qquad
  \dd Y_t
  =
  f_{j,a}(t)\,\dd X_{j,t}
  +
  \sqrt{1-f_{j,a}(t)^2}\,\dd W_t^\varepsilon,
\end{equation*}
with common deterministic initial values \(X_0=Y_0=0\), where \(B^X\) is a \(p_n\)-dimensional standard Brownian motion and \(W^\varepsilon\) is an independent scalar standard Brownian motion. This is a special case of \eqref{eq:regression-model} with 
$$\beta_t=f_{j,a}(t)e_j, \quad \text{and} \quad\varepsilon_t=\int_0^t\sqrt{1-f_{j,a}(s)^2}\,\dd W_s^\varepsilon.$$
Under \(P_{j,a}^{(n)}\), we can derive $C_t=I_{p_n}$, $c_t=f_{j,a}(t)e_j$, and $c_{YY,t}=1$, and hence
\begin{equation*}
  \Sigma_t
  =
  \begin{pmatrix}
    I_{p_n}&f_{j,a}(t)e_j\\
    f_{j,a}(t)e_j^\top&1
  \end{pmatrix},
  \qquad
  R_t=I_{p_n},
  \qquad
  r_t=f_{j,a}(t)e_j.
\end{equation*}
In particular, we have $r_{j,\tau}=a$ and $r_{\ell,\tau}=0$ for every $\ell\ne j$, so that
\[
  S_\tau^m=S_\tau^\beta=\{j\}.
\]

The choice of \(h_a\) ensures that these alternatives belong to the same local regularity class as the upper bound. Indeed, they have
\[
  [f_{j,a}]_\alpha\le a h_a^{-\alpha}[\psi]_\alpha \le\frac{L_\Sigma}{2^\alpha}.
\]
For \(h=k\Delta_n\), the definition of \(\tau_n\) gives $0\le\tau_n-\tau<\Delta_n\le h$. Consequently, we have
\[
\begin{aligned}
  \frac1h\int_{\tau_n}^{\tau_n+h} |f_{j,a}(s)-f_{j,a}(\tau)|\,\dd s\le\frac{[f_{j,a}]_\alpha}{h}\int_{\tau_n}^{\tau_n+h}|s-\tau|^\alpha\,\dd s \le[f_{j,a}]_\alpha(2h)^\alpha\le L_\Sigma h^\alpha.
\end{aligned}
\]
Only the \((j,Y)\) and \((Y,j)\) entries of \(\Sigma_t\) vary with time. The alternatives therefore satisfy Assumption~\ref{ass:local-characteristics} with \(\mathcal G_n=\Omega\) and any fixed constants \(0<c_-<1<c_+<\infty\). They also satisfy Assumption~\ref{ass:jump} trivially because their drift and jump coefficients are zero. Define
\[
  \calP_{\alpha,n}(a)
  =
  \{P_{j,a}^{(n)}:j=1,\ldots,p_n\}.
\]
The next theorem formalizes the resulting list-size minimax lower bound. It shows that no retained list of size $d_n$ can uniformly include the active coordinate below the stated localized signal scale.

\begin{theorem}\label{thm:list-lower-bound}
Let \((a_n)\) be a deterministic positive sequence and let \(1\le d_n<p_n/4\). The infimum below is over all measurable maps
\[
  \widehat S
  =
  \widehat S(\mathcal O_n)
  \subseteq
  \{1,\ldots,p_n\}
\]
satisfying \(|\widehat S|\le d_n\) almost surely. There exist constants \(c_0,c_1>0\), depending only on \(\alpha\), \(L_\Sigma\), and \(\psi\), such that, if
\begin{equation*}
  a_n
  \le
  c_1
  \left\{
    \frac{\log(p_n/d_n)}{n}
  \right\}^{\alpha/(2\alpha+1)},
\end{equation*}
and
\[
  a_n\le\frac12,
  \qquad
  [\tau-h_{a_n},\tau+h_{a_n}]
  \subset(0,1),
  \qquad
  h_{a_n}
  =
  2\left(\frac{a_n}{L_\Sigma}\right)^{1/\alpha},
\]
then, we have
\begin{equation*}
  \inf_{\widehat S:\,|\widehat S|\le d_n\ \mathrm{a.s.}}\sup_{P\in\calP_{\alpha,n}(a_n)}P\{S_\tau^m\nsubseteq\widehat S\} \ge c_0.
\end{equation*}
\end{theorem}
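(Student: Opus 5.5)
The plan is a Fano-type argument adapted to list-valued decisions, combined with a chi-square or Kullback--Leibler computation for Gaussian observations. First, reduce to a Bayes risk: put the uniform prior on $j\in\{1,\ldots,p_n\}$, so that the supremum is at least $p_n^{-1}\sum_j P_{j,a_n}^{(n)}\{j\notin\widehat S\}$. Compare each $P_j:=P_{j,a_n}^{(n)}$ to the null law $P_0$ in which $Y$ is an independent Brownian motion, i.e.\ $f\equiv0$. Under $P_0$ we have $\sum_j P_0\{j\in\widehat S\}=\E_0|\widehat S|\le d_n$. Hence the average of $P_0\{j\in\widehat S\}$ is at most $d_n/p_n<1/4$. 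It then suffices to show that $P_j$ and $P_0$ are close on average in a sense strong enough to transfer this.

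The cleanest route is the list-size version of Fano's inequality, in the form of \emph{Birgé's lemma} or the generalized Fano bound of Duchi and Wainwright. Consider the partition of $\{1,\ldots,p_n\}$ and the events $E_j=\{j\in\widehat S\}$; then $\sum_j\ind_{E_j}\le d_n$. The bound reads
\[
\frac1{p_n}\sum_j P_j(E_j)\le \frac{\frac1{p_n}\sum_j \mathrm{KL}(P_j\|P_0)+\log 2}{\log(p_n/d_n)}.
\]
This follows from the data-processing inequality applied to the Bernoulli variable $\ind_{E_J}$ with $J$ uniform, together with $P_0(E_J)\le d_n/p_n$. Using the mutual-information form with mixture reference, or simply $P_0$, gives the $\log(p_n/d_n)$ denominator. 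So we need $\mathrm{KL}(P_j\|P_0)\le c\log(p_n/d_n)$ with $c$ small. Since $d_n<p_n/4$, $\log(p_n/d_n)\ge\log4$, so the $\log2$ term is absorbed, and the average success probability stays below a constant strictly less than one.

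Next compute the KL divergence. The observations are Gaussian, and increments are independent across $i$. Under $P_j$, the increment $(\Delta_i^nX_j,\Delta_i^nY)$ is bivariate Gaussian with unit-scaled variances $\Delta_n$ and covariance $\int_{t_{i-1}}^{t_i}f_{j,a}(s)\,\dd s$. Both marginals are identical to the null, and other coordinates are unaffected. Writing $\rho_i=\Delta_n^{-1}\int_{I_i}f$, the per-increment divergence is $-\tfrac12\log(1-\rho_i^2)\le\rho_i^2$ for $|\rho_i|\le1/2$. Summing over increments gives
\[
\mathrm{KL}(P_j\|P_0)\le\sum_i\rho_i^2\le n\int_0^1 f_{j,a}^2\le n a^2\cdot 2h_a.
\]
This equals $4n a^{2}(a/L_\Sigma)^{1/\alpha}=C n a^{(2\alpha+1)/\alpha}$, using Jensen for the middle inequality. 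The assumption $[\tau-h_a,\tau+h_a]\subset(0,1)$ ensures that the bump lies within the observation window; it is not essential for the upper bound but keeps the model well-defined. Under $a_n\le c_1\{\log(p_n/d_n)/n\}^{\alpha/(2\alpha+1)}$, we get $\mathrm{KL}\le C c_1^{(2\alpha+1)/\alpha}\log(p_n/d_n)$, which is small once $c_1$ is chosen small.

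The main obstacle is getting the correct list-size Fano inequality with denominator $\log(p_n/d_n)$ rather than $\log p_n$. I would prove it directly from convexity of KL and data processing. Let $J$ be uniform and $\bar P=p_n^{-1}\sum_jP_j$. Then the average of $\mathrm{kl}(P_j(E_j)\,\|\,P_0(E_j))$ is at most the average KL. Joint convexity of binary kl gives $\mathrm{kl}(\bar q\,\|\,\bar q_0)$ with $\bar q_0\le d_n/p_n$. Finally, the bound $\mathrm{kl}(\bar q\|\bar q_0)\ge\bar q\log(1/\bar q_0)-\log2$ yields $\bar q\le(\overline{\mathrm{KL}}+\log2)/\log(p_n/d_n)$. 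Choosing $c_1$ so that this is at most, say, $1/2+\log2/\log4=1$ is too weak. I would therefore instead take $\overline{\mathrm{KL}}\le\tfrac18\log(p_n/d_n)$ and refine the $\log 2$ term using $\log(p_n/d_n)\ge\log4$ together with the sharper inequality $\mathrm{kl}(q\|q_0)\ge q\log(q/q_0)+(1-q)\log(1-q)$. This yields $\bar q\le1-c_0$, so the failure probability is at least $c_0$.
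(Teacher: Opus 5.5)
Your proposal is correct and follows the paper's proof: a list-size Fano bound with the null $P_0^{(n)}$ as reference, the per-increment Gaussian divergence $-\frac12\log(1-\rho_i^2)\le\rho_i^2$, Jensen, and the bump scaling $n\int f_{j,a}^2\lesssim L_\Sigma^{-1/\alpha}na^{(2\alpha+1)/\alpha}$. The one difference is in the list-Fano step: you derive it by data processing on the events $\{j\in\widehat S\}$ plus joint convexity of the binary divergence with $\E_0|\widehat S|\le d_n$, whereas the paper uses the entropy chain rule and bounds $I(J;V)$ by the average divergence to $P_0^{(n)}$; both give the same bound $(\overline{\mathrm{KL}}+\log 2)/\log(p_n/d_n)$ on the average success probability. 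The refinement you propose at the end is unnecessary, since $\overline{\mathrm{KL}}\le\frac18\log(p_n/d_n)$ with the crude bound already gives average success probability at most $\frac18+\frac{\log 2}{\log 4}=\frac58$ (the paper takes the divergence ratio at most $1/4$ and gets $c_0=1/4$).
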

\begin{remark}
The lower bound matches the upper localization and multiplicity order whenever $\log(p_n/d_n)\asymp\log p_n$. Since the laws in \(\calP_{\alpha,n}(a_n)\) are jump-free special cases of the general semimartingale model, the same lower bound applies to the larger model class that allows jumps. The alternatives use $R_t=I_{p_n}$, so this result isolates the localization and multiplicity cost of marginal screening. Correlation and cancellation introduce an additional identification problem, which is considered next.
\end{remark}

\subsection{Regression-support screening}
\label{subsec:beta-theory}

Let $S=S_\tau^\beta$, $s_n=|S|$, and
\[
  a_\beta=\min_{j\in S}|\theta_{j,\tau}|,
\]
with $a_\beta=\infty$ when $S=\varnothing$. Define
\[
B_{\mathrm{pa},n}
=
\begin{cases}
\displaystyle
\sup_{\substack{A\subseteq\{1,\ldots,p_n\}\\|A|\le q_n}}
\sum_{j\notin A}\zeta_{j\mid A}^2,
&
\begin{array}{l}
R_{AA,\tau}\text{ is nonsingular for every }A\text{ such that}\\[-1mm]
1\le |A|\le q_n,
\end{array}\\[3ex]
\infty,
&\text{otherwise}.
\end{cases}
\]
For $A=\varnothing$, recall that $\zeta_{j\mid\varnothing}=r_{j,\tau}$.

Regression-support screening requires population residual scores to remain identifiable along small active sets. The following condition gives this geometry together with deterministic envelopes for sparsity, beta-min, and residual-score energy.
\begin{assumption}\label{ass:sparse-geometry}
For the pre-specified model cap $q_n$, there exist deterministic sequences $\bar s_n$, $\underline a_{\beta,n}>0$, and $\bar B_{\mathrm{pa},n}<\infty$, and a constant $\phi_->0$, such that $\Pp(G_n^\beta)\to1$, where
\begin{equation*}
\begin{aligned}
G_n^\beta=\biggl\{s_n\le\bar s_n,\quad
a_\beta\ge\underline a_{\beta,n},\quad
B_{\mathrm{pa},n}\le\bar B_{\mathrm{pa},n},\quad\inf_{\substack{U\subseteq\{1,\ldots,p_n\}\\1\le|U|\le q_n+\bar s_n}}
\lambda_{\min}(R_{UU,\tau})\ge\phi_-\biggr\}.
\end{aligned}
\end{equation*}
\end{assumption}

The sparse eigenvalue condition has two roles. It makes every population projection used along the active-set path well defined, and it converts standardized beta-min into a residual covariance signal whenever an active variable remains unselected. On $G_n^\beta$, for a current set $A$ that does not yet contain $S$, let $B=S\setminus A$. From $r_\tau=R_\tau\theta_\tau$,
\begin{equation*}
  \zeta_{B\mid A}=\left(R_{BB,\tau}-R_{BA,\tau}R_{AA,\tau}^{-1}R_{AB,\tau}\right)\theta_{B,\tau}.
\end{equation*}
The matrix in parentheses is a Schur complement of $R_{A\cup S,A\cup S,\tau}$. Assumption~\ref{ass:sparse-geometry} therefore implies
\begin{equation}
  \max_{j\in S\setminus A}|\zeta_{j\mid A}|
  \ge\phi_-a_\beta,
  \qquad
  a_\beta=\min_{j\in S}|\theta_{j,\tau}|.
  \label{eq:conditional-signal}
\end{equation}
We use the convention $a_\beta=\infty$ when $S=\varnothing$. Thus sparse eigenvalues and beta-min imply a residual signal without a separate partial-faithfulness condition. The next proposition isolates the deterministic stability step needed to transfer this population signal to one estimated active set. Corollary~\ref{cor:pathwise-lepski} then gives simultaneous control over all active sets encountered by the procedure.

\begin{proposition}\label{prop:partial-stability}
Fix a realization in $G_n^\beta$. Let
$A\subseteq\{1,\ldots,p_n\}$ and $|A|\le q$, where $1\le q\le q_n$.
Let $\widehat r\in\R^{p_n}$, and let $\widehat R_{\cdot A}$ be the
requested active columns. Suppose these requested blocks are restrictions of a
conceptual augmented correlation matrix
\begin{equation*}
  \widehat{\mathcal R}=
  \begin{pmatrix}
    \widehat R&\widehat r\\
    \widehat r^\top&\widehat R_{YY}
  \end{pmatrix}.
\end{equation*}
Suppose that
\[
  \mathcal R_\tau\succeq0,
  \qquad
  \widehat{\mathcal R}\succeq0,
  \qquad
  \max_{1\le a\le\bar p_n}\widehat{\mathcal R}_{aa}\le1,
\]
and that the sparse-eigenvalue inequality in $G_n^\beta$ holds. Assume further that
\[
  \bnorm{\widehat r-r_\tau}_\infty\le\eta_r,
  \qquad
  \bnorm{\widehat R_{\cdot A}-R_{\cdot A,\tau}}_{\max}\le\eta_R,
  \qquad
  q\eta_R\le\frac{\phi_-}{2}.
\]
When $A=\varnothing$, the requested-column condition is void and we use
$\widehat\zeta_{j\mid\varnothing}=\widehat r_j$ and
$\zeta_{j\mid\varnothing}=r_{j,\tau}$. If $A\ne\varnothing$, then
$\widehat R_{AA}$ is invertible. In all cases,
\begin{equation*}
  \sup_{j\notin A}
  |\widehat\zeta_{j\mid A}-\zeta_{j\mid A}|
  \le
  C_{\mathrm{pr}}(\phi_-)\left(\sqrt q\,\eta_r+q\eta_R\right),
\end{equation*}
where $C_{\mathrm{pr}}(\phi_-)$ is a finite constant depending only on $\phi_-$.
\end{proposition}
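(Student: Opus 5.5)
Write $R_{AA}=R_{AA,\tau}$, $r_A=r_{A,\tau}$, $\widehat R_{AA}$, $\widehat r_A$, and decompose the score difference as
\[
\widehat\zeta_{j\mid A}-\zeta_{j\mid A}
=(\widehat r_j-r_{j,\tau})
-\bigl(\widehat R_{jA}\widehat R_{AA}^{-1}\widehat r_A-R_{jA}R_{AA}^{-1}r_A\bigr).
\]
The case $A=\varnothing$ is immediate, giving the bound $\eta_r$. For $A\ne\varnothing$ the plan has three parts.

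\textbf{Invertibility of $\widehat R_{AA}$.} Since $|A|\le q\le q_n$, the sparse-eigenvalue inequality in $G_n^\beta$ gives $\lambda_{\min}(R_{AA})\ge\phi_-$. Using $\|M\|_{\mathrm{op}}\le |A|\,\|M\|_{\max}$, we get
\[
\|\widehat R_{AA}-R_{AA}\|_{\mathrm{op}}\le q\eta_R\le\phi_-/2.
\]
By Weyl's inequality, $\lambda_{\min}(\widehat R_{AA})\ge\phi_-/2$. Hence $\widehat R_{AA}$ is invertible and $\|\widehat R_{AA}^{-1}\|_{\mathrm{op}}\le 2/\phi_-$.

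\textbf{Uniform $\ell_2$ bounds from positive semidefiniteness.} Both $\mathcal R_\tau$ and $\widehat{\mathcal R}$ are positive semidefinite with diagonal entries at most one. I will extract uniform $\ell_2$ bounds on the projection coefficients without any growing $\sqrt q$ factors. For the $(A\cup\{Y\})$ principal block of $\widehat{\mathcal R}$, the Schur complement is nonnegative, so
\[
\widehat r_A^\top\widehat R_{AA}^{-1}\widehat r_A\le\widehat R_{YY}\le1.
\]
Therefore $\|\widehat R_{AA}^{-1/2}\widehat r_A\|_2\le1$ and $\|\widehat b\|_2\le(2/\phi_-)^{1/2}$, where $\widehat b=\widehat R_{AA}^{-1}\widehat r_A$. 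The same argument with the $(A\cup\{j\})$ block gives $\|\widehat R_{AA}^{-1/2}\widehat R_{Aj}\|_2\le1$, and hence $\|\widehat w_j\|_2\le(2/\phi_-)^{1/2}$, where $\widehat w_j=\widehat R_{AA}^{-1}\widehat R_{Aj}$. For the population versions we use $\mathcal R_\tau\succeq0$ with unit diagonal, which gives $\|b\|_2,\|w_j\|_2\le\phi_-^{-1/2}$.

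\textbf{Telescoping the bilinear term.} Write
\[
\widehat R_{jA}\widehat R_{AA}^{-1}\widehat r_A-R_{jA}R_{AA}^{-1}r_A
=(\widehat R_{jA}-R_{jA})\widehat b
+w_j^\top(\widehat r_A-r_A)
+w_j^\top(R_{AA}-\widehat R_{AA})\widehat b,
\]
using the identity $R_{jA}\widehat R_{AA}^{-1}\widehat r_A-R_{jA}R_{AA}^{-1}r_A=w_j^\top(\widehat r_A-r_A)+w_j^\top(R_{AA}-\widehat R_{AA})\widehat b$. Apply Cauchy--Schwarz to each term, using $\|\widehat R_{jA}-R_{jA}\|_2\le\sqrt q\,\eta_R$, $\|\widehat r_A-r_A\|_2\le\sqrt q\,\eta_r$, and $\|\widehat R_{AA}-R_{AA}\|_{\mathrm{op}}\le q\eta_R$. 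This yields
\[
|\widehat\zeta_{j\mid A}-\zeta_{j\mid A}|
\le \eta_r+C(\phi_-)\bigl(\sqrt q\,\eta_R+\sqrt q\,\eta_r+q\eta_R\bigr).
\]
Since $\sqrt q\le q$, this gives the stated bound $C_{\mathrm{pr}}(\phi_-)(\sqrt q\,\eta_r+q\eta_R)$ uniformly in $j\notin A$.

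\textbf{Main obstacle.} The crux is the second part: bounding $\|\widehat b\|_2$ and $\|\widehat w_j\|_2$ by constants rather than by $\sqrt q$ times entrywise bounds. A naive entrywise bound would cost an extra $\sqrt q$ and destroy the stated order. The positive-semidefiniteness of the full conceptual augmented matrix $\widehat{\mathcal R}$, together with its diagonal entries being at most one, is exactly what makes the Schur complement argument available. This is why the proposition assumes $\widehat{\mathcal R}\succeq0$ even though only the requested columns are ever computed.
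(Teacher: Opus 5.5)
Your proposal is correct and follows essentially the paper's route. The shared steps are Weyl's inequality with $\|\widehat R_{AA}-R_{AA}\|_{\mathrm{op}}\le q\eta_R$ for invertibility, the Schur-complement bounds $\|R_{AA}^{-1}R_{Aj}\|_2\le\phi_-^{-1/2}$ and $\|\widehat R_{AA}^{-1}\widehat r_A\|_2\le(2/\phi_-)^{1/2}$ obtained from positive semidefiniteness and the unit-bounded diagonals, and the same three-term telescoping of the projection term. The only difference is cosmetic: you close with $\ell_2$ Cauchy--Schwarz and the operator-norm bound, while the paper passes to $\ell_1$ norms and uses max-norm entrywise bounds, and both give the order $\sqrt q\,\eta_r+q\eta_R$ (your bound on $\widehat w_j$ is never actually used).
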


The estimator in \eqref{eq:estimated-augmented-correlation} satisfies the stated positive-semidefinite and diagonal conditions. For fixed $A$, the bound uses only $\widehat r$, $\widehat R_{\cdot A}$, and $\widehat R_{AA}$.

Define
\begin{equation*}
  \Delta_{n,q}(x)
  =C_\Delta(\phi_-)(\sqrt q+q)\varepsilon_n(x).
\end{equation*}
Here $C_\Delta(\phi_-)$ is a sufficiently large finite constant depending only on $\phi_-$ and the constants in Theorem~\ref{thm:joint-adaptation}.
The quantity $B_{\mathrm{pa},n}$ plays the same role for residual scores that $B_m$ plays for marginal scores: it limits how many coordinates can have a large population score at any iteration. Combining the conditional signal bound with uniform stability reduces the iterative argument to a counting step. Thus, a retained block large enough to contain all coordinates above half the active residual signal must add at least one missing active variable. The following theorem applies this step along the entire PR-SISIS path and turns repeated support gains into sure screening.

\begin{theorem}\label{thm:pr-sisis}
Suppose the conditions of Corollary~\ref{cor:pathwise-lepski} and Assumption~\ref{ass:sparse-geometry} hold. Run Algorithm~\ref{alg:pr-sisis} with deterministic $m_n$ and $q_n$, and assume, for a sufficiently small constant $c_{\mathrm{stab}}>0$,
\begin{equation}
\begin{gathered}
  m_n\bar s_n\le q_n,\qquad q_n\le k_{\min},\qquad
  m_n\ge\left\lceil
  \frac{4\bar B_{\mathrm{pa},n}}
       {(\phi_-\underline a_{\beta,n})^2}
  \right\rceil,\\
  q_n\varepsilon_n(x)\le c_{\mathrm{stab}}\phi_-,\qquad
  \phi_-\underline a_{\beta,n}\ge4\Delta_{n,q_n}(x).
\end{gathered}
\label{eq:block-size}
\end{equation}
Then
\begin{equation}
  \Pp\{S_\tau^\beta\subseteq\widehat S_{\mathrm{PR}}\}\ge1-Ce^{-cx}-o(1).
  \label{eq:pr-sure}
\end{equation}
On the same event, every sample correlation submatrix inverted by the algorithm is nonsingular. The $o(1)$ term includes $\Pp\{(G_n^\beta)^c\}$.
\end{theorem}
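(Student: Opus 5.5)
We work on the intersection $\mathcal E_n$ of the event of Theorem~\ref{thm:joint-adaptation} (hence of Corollary~\ref{cor:pathwise-lepski}), the event $G_n^\beta$ of Assumption~\ref{ass:sparse-geometry}, and the scale event $\mathcal H_n$. This intersection has probability at least $1-Ce^{-cx}-o(1)$, and the $o(1)$ term absorbs $\Pp\{(G_n^\beta)^c\}$. Everything that follows is deterministic on $\mathcal E_n$. Corollary~\ref{cor:pathwise-lepski} holds simultaneously for all $A$ with $|A|\le q_n$, so it covers every data-dependent set $A^{(s)}$ along the path. For each such set it gives
\[
\bnorm{\widehat r(\widehat k_s)-r_\tau}_\infty\vee\bnorm{\widehat R_{\cdot A^{(s)}}(\widehat k_s)-R_{\cdot A^{(s)},\tau}}_{\max}\le C\varepsilon_n(x).
\]
The estimator \eqref{eq:estimated-augmented-correlation} is positive semidefinite. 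Its diagonal is at most one because $\widehat V_a\ge\widehat\Sigma^{\mathrm{tr}}_{aa}$. Take $\eta_r=\eta_R=C\varepsilon_n(x)$ and $q=q_n$. Choosing $c_{\mathrm{stab}}\le 1/(2C)$ in \eqref{eq:block-size} gives $q\eta_R\le\phi_-/2$. Proposition~\ref{prop:partial-stability} then shows that $\widehat R_{A^{(s)}A^{(s)}}(\widehat k_s)$ is invertible, so the algorithm never stops at a singular block. This is the nonsingularity claim. The proposition also gives
\[
\sup_{j\notin A^{(s)}}|\widehat\zeta_{j\mid A^{(s)}}-\zeta_{j\mid A^{(s)}}|\le C_{\mathrm{pr}}(\phi_-)C(\sqrt{q_n}+q_n)\varepsilon_n(x)\le\Delta_{n,q_n}(x),
\]
once $C_\Delta(\phi_-)\ge C_{\mathrm{pr}}C$. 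The role of the condition $q_n\le k_{\min}$ is not clear to me. I expect it either enters the corollary or guarantees $|A|\le q_n$ remains in range, and I do not use it further here.

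The key step is a per-iteration gain. Suppose $S\not\subseteq A^{(s)}$ with $|A^{(s)}|<q_n$. By \eqref{eq:conditional-signal} there is $j^\ast\in S\setminus A^{(s)}$ with $|\zeta_{j^\ast\mid A^{(s)}}|\ge\phi_-a_\beta\ge\phi_-\underline a_{\beta,n}=:\gamma$. The stability bound and $\gamma\ge4\Delta_{n,q_n}$ give $|\widehat\zeta_{j^\ast\mid A^{(s)}}|\ge\gamma-\Delta\ge 3\gamma/4$. Any $j\notin A^{(s)}$ that ranks at least as high must satisfy $|\widehat\zeta_j|\ge3\gamma/4$, and therefore $|\zeta_{j\mid A^{(s)}}|\ge\gamma/2$. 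By the energy bound $\sum_{j\notin A}\zeta_{j\mid A}^2\le B_{\mathrm{pa},n}\le\bar B_{\mathrm{pa},n}$, at most $4\bar B_{\mathrm{pa},n}/\gamma^2\le m_n$ coordinates can do so. So whenever $\widetilde m_s=m_n$, the top-$\widetilde m_s$ block contains either $j^\ast$ or another element of $S\setminus A^{(s)}$. Either way $|S\cap A^{(s+1)}|\ge|S\cap A^{(s)}|+1$. Ties are harmless: every coordinate tied with $j^\ast$ is also counted in the set of size at most $m_n$.

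The last step is to rule out truncation by the cap, that is, a final step with $\widetilde m_s<m_n$ or the cap reached before $S$ is covered. Each full iteration adds at least one element of $S$. After at most $\bar s_n$ full blocks, using $m_n\bar s_n$ slots, $S$ is contained in the active set. Since $m_n\bar s_n\le q_n$, the cap is not reached while $S\not\subseteq A^{(s)}$. More precisely, if $S\not\subseteq A^{(s)}$ then at most $s\le|S\cap A^{(s)}|$ blocks have been added, so
\[
|A^{(s)}|\le m_n(s_n-1)\le q_n-m_n,
\]
and hence $\widetilde m_s=m_n$. Also $L_n\ge\bar s_n\ge s_n$, so enough iterations are run. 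After $S\subseteq A^{(s)}$, later iterations only enlarge the set, and therefore $S\subseteq\widehat S_{\mathrm{PR}}$.

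I expect the main obstacle to be the bookkeeping that makes the stability bound uniform over random, path-dependent active sets. This is handled by the simultaneity in Corollary~\ref{cor:pathwise-lepski} together with the fact that Proposition~\ref{prop:partial-stability} is deterministic. The cap-counting argument and the tie handling in the counting step are the other points that need care.
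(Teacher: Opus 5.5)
Your proposal is correct and follows the paper's proof essentially step for step: the simultaneous event of Corollary~1 intersected with $G_n^\beta$, then Proposition~4 with $\eta_r=\eta_R=C\varepsilon_n(x)$ and $q=q_n$ (which gives both nonsingularity and the uniform bound $\Delta_{n,q_n}(x)\le\phi_-a_\beta/4$), then the residual signal \eqref{eq:conditional-signal}, the energy-counting step, and the induction using $m_n\bar s_n\le q_n$ and $L_n\ge\bar s_n$. The paper's proof also never invokes $q_n\le k_{\min}$, so you are not missing anything by leaving it unused.
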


The relation $m_n\bar s_n\le q_n$ ensures that the first $\bar s_n$ updates have full block size, while $L_n=\lceil q_n/m_n\rceil$ is determined by the cap.

One sufficient energy regime is as follows. If $\|R_\tau\|_{\mathrm{op}}\le \phi_+$ and $\max_{j\in S}|\theta_{j,\tau}|\le C_\theta a_\beta$, let $C=A^c$. For $A=\varnothing$, set $H_{C\mid\varnothing}=R_{CC,\tau}$. For nonempty $A$, define $H_{C\mid A}=R_{CC,\tau}-R_{CA,\tau}R_{AA,\tau}^{-1}R_{AC,\tau}$. In both cases, $0\preceq H_{C\mid A}\preceq R_{CC,\tau}$, so
\[
\|H_{C\mid A}\|_{\mathrm{op}}\le\|R_{CC,\tau}\|_{\mathrm{op}}\le\phi_+.
\]
Moreover, $\zeta_{C\mid A}=H_{C\mid A}\theta_C$, and $\theta_C$ is supported on $S\setminus A$. Hence
\[
  \sum_{j\notin A}\zeta_{j\mid A}^2\le\phi_+^2\|\theta_{S\setminus A}\|_2^2\le\phi_+^2C_\theta^2s_na_\beta^2.
\]
Consequently, $B_{\mathrm{pa},n}\le\phi_+^2C_\theta^2s_na_\beta^2$. Any deterministic envelope $\bar B_{\mathrm{pa},n}$ that dominates this quantity on $G_n^\beta$ gives a sufficient fixed-block condition in \eqref{eq:block-size}.

If the current model misses an active variable, \eqref{eq:conditional-signal} gives a population residual score of size $\phi_-a_\beta$, and Proposition~\ref{prop:partial-stability} preserves a fixed fraction of this signal at the iteration-specific window $\widehat k(A^{(s)})$. The energy bound in \eqref{eq:block-size} then forces each nonterminal update to add at least one missing active coordinate, so induction over at most $\bar s_n$ steps gives sure screening.

\begin{remark}
The conditions require the residual signal to dominate the estimation error and the intermediate model size to remain small. These conditions are sufficient rather than necessary and can be conservative, in particular because $\overline B_{pa,n}$ controls the residual-score energy uniformly over the entire active-set path. If $x=x_n\to\infty$ while the stated conditions remain satisfied, the probability in \eqref{eq:pr-sure} tends to one.
\end{remark}

% ======================================================================
\section{Simulations}
\label{sec:simulation}
% ======================================================================

\subsection{Simulation designs}

We conduct two sets of experiments. The first examines screening performance across signal strengths and dimensions. Each replication consists of one path on $[0,1]$, observed at $n=390$ one-minute increments, with target time $\tau=1/2$ and support $S=\{1,\ldots,5\}$. In the regular cancellation designs, denoted LC-R for $p=50$ and HC-R for $p=500$, the correlation matrix $R$ is block diagonal: its first three $2\times2$ blocks have correlations $0.60$, $0.55$, and $0.60$, while the remaining inactive coordinates form an AR(1) block with parameter $0.10$. The coefficient direction is
\[b=(0.70,-0.70,0.60,-0.60,0.50,0,\ldots,0)^\top.\]
We set $\beta_0=\lambda b$ and $c_\varepsilon=1$. For each prescribed standardized beta-min, $\lambda$ is chosen to satisfy
\[a_\beta=\frac{0.50\lambda}{\{\lambda^2b^\top Rb+c_\varepsilon\}^{1/2}},\qquad a_\beta\in\{0.15,0.20,0.25,0.30,0.35,0.40,0.45\}.\]
The coefficient direction and covariance matrix are held fixed across signal levels, and common random numbers are used within each dimension. The weak-dependence design LW has $p=50$, $R_{jk}=0.10^{|j-k|}$, $c_\varepsilon=0.50$, and active coefficients $(0.60,0.55,0.50,0.45,0.40)$, which give $a_\beta\approx0.283$. The design HC-S modifies HC-R by setting $R_{56}=R_{65}=0.80$, with coordinate 6 remaining inactive, and chooses $\lambda$ so that $a_\beta=0.40$.

All designs use the same stochastic volatility, leverage, intraday periodicity, time-varying sparse coefficients, and finite-activity jumps. Let $v_0=1$ and define
\[
s(t)=1+0.15\sin(2\pi t),
\qquad
g(t)=1+0.10\sin\{2\pi(t-\tau)\}.
\]
The continuous-time data-generating process is
\begin{align*}
\dd v_t
&=\frac{6}{252}(1-v_t)\dd t
+\frac{0.50}{\sqrt{252}}\sqrt{v_t}\dd U_t+\dd J_t^v,\\
\dd X_t
&=s(t)\sqrt{v_t}R_t^{1/2}\dd B_t+\dd J_t^X,\\
\dd Y_t
&=\{g(t)\beta_0\}^\top\dd X_t
+s(t)\sqrt{c_\varepsilon v_t}\dd W_t^\varepsilon+\dd J_t^\varepsilon.
\end{align*}
Leverage is introduced through $\dd\langle B_1,U\rangle_t=-0.30\dd t$, with all other Brownian covariations equal to zero. The jump components include common and coordinate-specific covariate jumps, residual jumps, and a volatility jump. Their distributions and intensities, together with the Euler implementation, are given in the Supplement.

At the target, $s(\tau)=g(\tau)=1$, so the design parameters $\beta_0$, $R_\tau$, and $c_\epsilon$ determine the standardized beta-min used to index the designs. Away from the target, $s(t)$ produces a common intraday scale and $g(t)$ varies the sparse coefficient path. The stochastic volatility factor adds pathwise scale variation, while the leverage and jump components retain the high-frequency features covered by the model. The reference screening designs take $R_t=R$ near the target. The dynamic-correlation experiment below changes only its post-target path.

We use the candidate grid $\mathcal K_n=\{32,40,50,63,79,99,124,155\}$ and set $C_{\mathrm L}=0.40$ and $x=0$. Coordinatewise truncation uses the full-day bipower scale with multiplier $4$ and exponent $0.47$, and the covariance diagonal is floored at $10^{-12}$ before standardization. For $d=10$, PR-SISIS uses $m_n=2$ and $q_n=10$. For the HC-R comparison with $d=20$, it uses $m_n=2$ and $q_n=20$. We use 400 replications for each cancellation design and 500 replications for LW. Ties are resolved by increasing coordinate index.

For a retained set $\widehat S(d)$, define
\[
\begin{aligned}
\mathrm{Sure}&=\ind_{\{S\subseteq\widehat S(d)\}},
&\qquad \mathrm{TP}&=|S\cap\widehat S(d)|,\\
\mathrm{Recovery}&=100\,\frac{\mathrm{TP}}{|S|},
&\qquad \mathrm{Missed}&=|S|-\mathrm{TP}.
\end{aligned}
\]
To measure the position of the last active coordinate in a complete ordering $\pi$, define
\[\mathrm{MMS}(\pi)=\max_{j\in S}\operatorname{rank}_\pi(j).\]
For M-SIS, $\pi$ is the complete marginal-score ordering. For PR-SISIS, the remaining coordinates are ordered after the final update by their absolute partial-residual scores. The resulting Extended MMS is reported as the average last-active rank.

\subsection{Signal strength and screening performance}

\FloatBarrier
\begin{figure}[!ht]
\centering
\includegraphics[width=\textwidth]{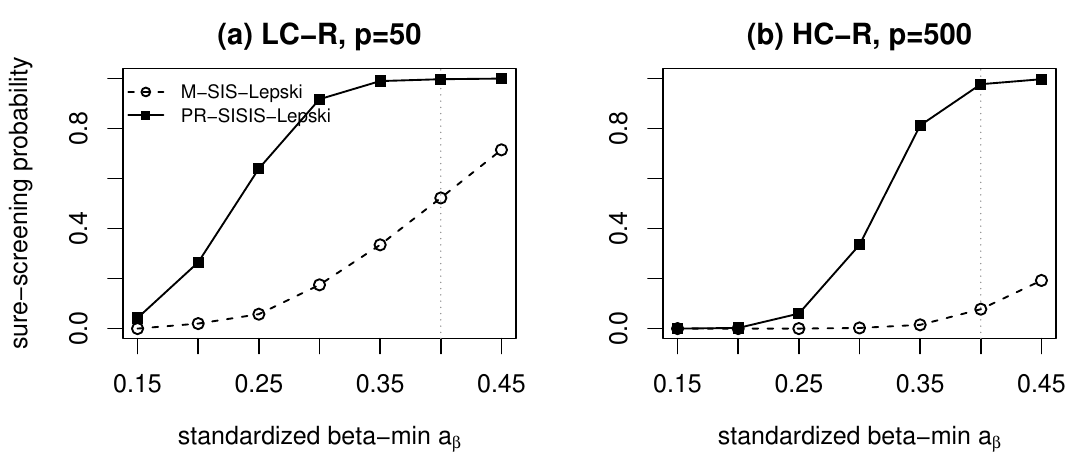}
\caption{Sure-screening probabilities across standardized signal levels. Panel (a) reports LC-R with $p=50$, and panel (b) reports HC-R with $p=500>n=390$. Both procedures retain $d=10$ coordinates. Solid squares denote PR-SISIS-Lepski, dashed circles denote M-SIS-Lepski, and the vertical line marks the reference value $a_\beta=0.40$.}
\label{fig:signal-strength}
\end{figure}

Figure~\ref{fig:signal-strength} reports the sure-screening probabilities across standardized signal levels. The transition shifts toward stronger signals as the dimension increases from $p=50$ to $p=500$. At $a_\beta=0.40$, PR-SISIS retains the full HC-R support in $97.8\%$ of the replications, compared with $7.8\%$ for M-SIS.

\begin{table}[!ht]
\centering
\caption{Screening performance in the reference configurations}
\label{tab:simulation-reference}
\scriptsize
\setlength{\tabcolsep}{3.2pt}
\renewcommand{\arraystretch}{1.10}
\begin{tabular*}{\textwidth}{@{\extracolsep{\fill}}lr*{8}{r}@{}}
\toprule
Config. & $p$
& \multicolumn{2}{c}{Sure (\%)}
& \multicolumn{2}{c}{Recovered (\%)}
& \multicolumn{2}{c}{Avg. missed}
& \multicolumn{2}{c}{Avg. last-active rank}\\
\cmidrule(lr){3-4}
\cmidrule(lr){5-6}
\cmidrule(lr){7-8}
\cmidrule(lr){9-10}
& & M-SIS & PR-SISIS
& M-SIS & PR-SISIS
& M-SIS & PR-SISIS
& M-SIS & PR-SISIS\\
\midrule
\multicolumn{10}{@{}l}{\textit{Panel A: Retained size $d=10$}}\\
\addlinespace[2pt]
LW   & 50  & 99.4 & 100.0 & 99.9 & 100.0 & 0.006 & 0.000 & 5.2  & 5.0\\
LC-R & 50  & 52.3 & 99.8  & 89.4 & 100.0 & 0.530 & 0.003 & 13.0 & 5.4\\
HC-R & 500 & 7.8  & 97.8  & 69.9 & 99.1  & 1.505 & 0.045 & 76.9 & 6.5\\
HC-S & 500 & 5.8  & 94.8  & 68.7 & 98.4  & 1.565 & 0.083 & 77.8 & 8.3\\
\midrule
\multicolumn{10}{@{}l}{\textit{Panel B: Retained size $d=20$}}\\
\addlinespace[2pt]
HC-R & 500
& 25.8& 99.3& 80.2& 99.7& 0.990& 0.015& 76.9& 6.8\\
\bottomrule
\end{tabular*}

\smallskip
\begin{minipage}{\textwidth}
\footnotesize
\raggedright
\textit{Note.} For LC-R, HC-R, and HC-S, entries are Monte Carlo averages at $a_\beta=0.40$. Under the stated fixed-coefficient LW design, $a_\beta\approx0.283$. Sure and Recovered are percentages.
\end{minipage}
\end{table}

Table~\ref{tab:simulation-reference} summarizes the finite-sample performance. The two procedures perform similarly under weak dependence, whereas PR-SISIS is substantially more accurate in the cancellation designs. In HC-R with $d=10$, the sure-screening probability is 97.8\% for PR-SISIS and 7.8\% for M-SIS, while the average last-active rank is 6.5 versus 76.9. The HC-S design adds a strongly correlated inactive surrogate, yet PR-SISIS still attains a 94.8\% sure-screening probability. The next experiment examines the pathwise selector under changing correlations. Additional sensitivity and robustness checks are reported in the Supplementary.

\FloatBarrier
\subsection{Window selection under changing correlations}

The first set of simulations studies screening accuracy when the local correlation structure is stable near the target. We next examine the window selector when correlations begin to change after the target time. The experiment uses $p=100$, $m_n=2$, and $q_n=10$. W-Fast, W-Medium, and W-Slow share the same correlation matrix and coefficient vector at the target, so they begin with the same population screening problem. They differ only in the speed of the post-target correlation change. In each design, the first three block correlations move away from their target values over the respective intervals
\[
(32/n,50/n),\qquad (63/n,99/n),\qquad (124/n,155/n).
\]
The transition function and the complete design are given in the Supplement. This construction isolates whether the pathwise comparison shortens its window as correlation columns begin to reflect a departure from the target structure.

\FloatBarrier
\begin{figure}[!ht]
\centering
\includegraphics[width=0.82\textwidth]{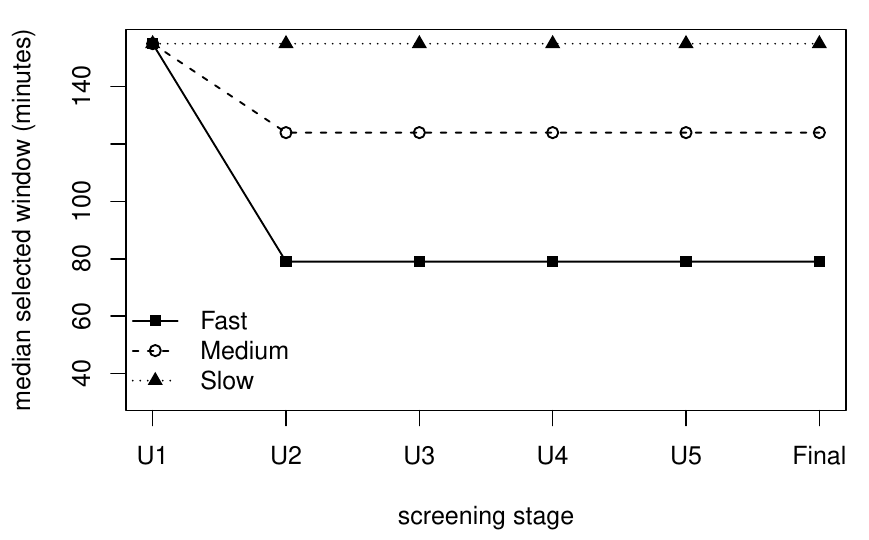}
\caption{Median selected windows across PR-SISIS screening stages under three patterns of post-target correlation change. The first update uses the marginal comparison. Later updates include the correlation columns associated with the current active set.}
\label{fig:dynamic-correlation-windows}
\end{figure}

Figure~\ref{fig:dynamic-correlation-windows} shows that the first update typically uses the largest candidate. This update compares the marginal vector alone. Once variables enter the active set, the later comparisons also include their correlation columns and respond to the changing local dependence. The final median windows are 79, 124, and 155 minutes in W-Fast, W-Medium, and W-Slow, respectively. The selected windows therefore shorten earlier when the local correlations change earlier. At the same time, the PR-SISIS sure-screening probabilities remain $99.0\%$, $100.0\%$, and $100.0\%$. The complete screening summary is reported in the Supplement.

\FloatBarrier
\section{Empirical illustration}
\label{sec:empirical}

\subsection{Data and implementation}

This high-frequency factor-zoo panel has been used to study intraday market-return predictability \cite{AletiBollerslevSiggaard2025}. We instead illustrate fixed-time local screening. The response is the Fama--French market portfolio, and the remaining 271 one-minute factor and industry portfolio returns are candidate covariates. After excluding the two shortened sessions, we retain 251 complete regular U.S. trading days in 2020. Within each retained day, we remove the adjusted overnight observation. The target is 10:00 New York time.

We analyze each day as a separate high-frequency path. This preserves the fixed-time interpretation of the spot target and allows the selected window to respond to that day's local covariance path. The forward construction begins with the first return after 10:00 and uses no observations from before the target in the local covariance estimate. We use the simulation window grid
\[
\{32,40,50,63,79,99,124,155\},
\]
with $C_L=0.40$, $x=0$, and the same truncation and variance-floor rules. Both methods retain 10 factors. PR-SISIS uses $m_n=2$ and $q_n=10$. Selection frequency is the fraction of retained days on which a factor enters the top-10 list. Because the population spot support is unknown, these frequencies describe recurrence rather than screening accuracy. Further data-processing and numerical details are reported in the Supplement.

\subsection{Results}

Both methods complete on all 251 retained days. The marginal and first PR comparisons select the 155-minute window on 245 days, or $97.6\%$ of the sample. After correlation columns enter the PR-SISIS comparison, the final selector uses 155 minutes on 235 days and a shorter window on 16 days. These counts correspond to $93.6\%$ and $6.4\%$ of the retained days. Thus the marginal comparison is usually stable over the longest candidate, while a small but nonnegligible group of daily active-column comparisons favors more localization.

\FloatBarrier
\begin{table}[!ht]
\centering
\caption{Most frequently retained factors in the 2020 intraday screening illustration}
\label{tab:empirical-top-factors}
\scriptsize
\setlength{\tabcolsep}{4pt}
\begin{tabular}{rllrr}
\toprule
Rank & Factor & Cluster & PR-SISIS & M-SIS\\
\midrule
1 & Business services & Industry & 0.932 & 0.992 \\
2 & Electronic equipment & Industry & 0.590 & 0.944 \\
3 & Pharmaceutical products & Industry & 0.514 & 0.092 \\
4 & Retail & Industry & 0.375 & 0.765 \\
5 & Wholesale & Industry & 0.267 & 0.713 \\
6 & Insurance & Industry & 0.255 & 0.195 \\
7 & Utilities & Industry & 0.251 & 0.012 \\
8 & Trading & Industry & 0.227 & 0.629 \\
9 & Banking & Industry & 0.203 & 0.251 \\
10 & Book leverage & Leverage & 0.191 & 0.000\\
\bottomrule
\end{tabular}
\smallskip
\begin{minipage}{0.94\textwidth}
\footnotesize
\textit{Note.} Frequencies are the fractions of data-quality-retained full trading days on which the factor appears in the corresponding top-10 retained set. The response is the one-minute Fama--French market portfolio return at a 10:00 New York target, and the candidate covariates are the remaining factor and industry portfolio returns.
\end{minipage}
\end{table}

The M-SIS and PR-SISIS lists overlap in 2.80 factors on average, with median overlap 3. Table~\ref{tab:empirical-top-factors} reports the ten most frequently retained PR-SISIS variables and their M-SIS frequencies. Business services is retained by PR-SISIS on $93.2\%$ of the days. Pharmaceutical products and Utilities are much more recurrent under PR-SISIS, while Electronic equipment and Retail are more recurrent under M-SIS. These differences illustrate how partial residualization changes the retained ranking in a strongly dependent panel.

\FloatBarrier
\begin{figure}[!ht]
\centering
\includegraphics[width=0.75\textwidth]{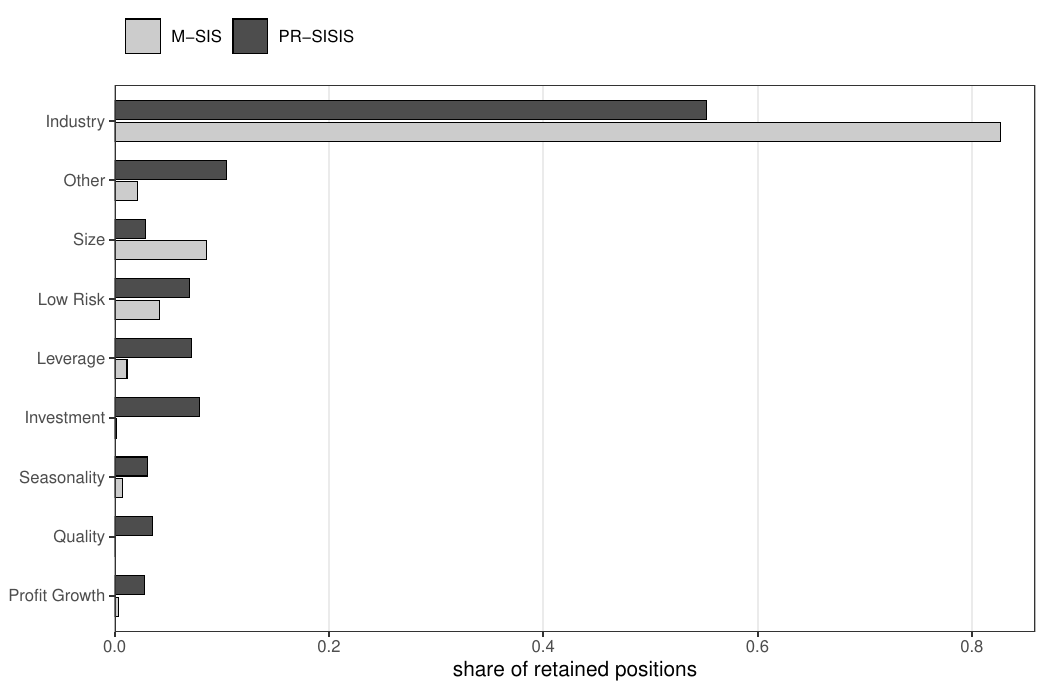}
\caption{Cluster composition of the retained factor lists. The bars report shares of all top-10 positions across the 251 retained days. The eight largest clusters by combined share are shown separately.}
\label{fig:empirical-clusters}
\end{figure}

Figure~\ref{fig:empirical-clusters} summarizes the cluster composition of the retained sets. Industry portfolios account for $0.826$ of all retained M-SIS positions and $0.552$ of all retained PR-SISIS positions. Because both procedures retain ten variables per day, this difference is a change in composition rather than list size. PR-SISIS allocates more retained positions to several nonindustry characteristic groups, including size, low-risk, leverage, investment, seasonality, quality, and profitability-related factors. The cluster view therefore complements the individual-factor frequencies in Table~\ref{tab:empirical-top-factors} and shows that partial residualization changes the retained panel beyond a few isolated names.

\FloatBarrier
\section{Conclusions}
\label{sec:conclusions}

We developed screening methods for continuous-time regression observed at high frequency. Uniform entrywise concentration over a candidate window grid yields adaptive estimation of the marginal correlation vector and the correlation columns used along the PR-SISIS path. The resulting rate is $\{(\log p)/n\}^{\alpha/(2\alpha+1)}$, up to confidence and grid logarithms. It supports sure screening for the marginal procedure, a list-size lower bound, and sure screening for the partial-residual procedure under standardized beta-min and sparse-eigenvalue conditions.

PR-SISIS addresses the cancellation and surrogate effects that can defeat marginal screening by updating both residual scores and the selected window after each active-set expansion. The simulations show substantial gains in the dependent cancellation designs, and the empirical illustration shows that partial residualization changes the recurrent factor rankings in a high-frequency panel. Extensions to asynchronous observations, alternative local weighting schemes, and post-screening estimation remain for future work.

% ======================================================================
% References
% ======================================================================
\begin{supplement}
\stitle{Supplement to ``Ultra-high-dimensional spot support screening in continuous-time regression''}
\sdescription{This supplement contains proofs, implementation details, additional simulations, and empirical results.}
\end{supplement}

\begin{funding}
This work was supported by the National Natural Science Foundation of China (NSFC, 12501361).
\end{funding}

% ======================================================================
% Supplementary Material (appended as a second, independently numbered part)
% ======================================================================
\clearpage

% Supplement-only macros (kept separate from the main-paper notation).
\providecommand{\Prb}{\mathbb{P}}
\providecommand{\1}{\mathbf{1}}
\providecommand{\KL}{\operatorname{KL}}
\providecommand{\op}{\mathrm{op}}
\providecommand{\maxn}{\mathrm{max}}
\providecommand{\cK}{\mathcal{K}_n}
\providecommand{\deltan}{\Delta_n}

% Match the standalone Supplementary numbering.
\setcounter{page}{1}
\setcounter{section}{0}
\setcounter{subsection}{0}
\setcounter{equation}{0}
\setcounter{figure}{0}
\setcounter{table}{0}
\setcounter{theorem}{0}
\setcounter{emailref}{0}
\setcounter{addressref}{0}
\setcounter{thanks}{0}
\makeatletter
\global\expandafter\let\csname author@1@ead@marks\endcsname\relax
\global\let\@thanks\@empty
\makeatother
\numberwithin{equation}{section}
\renewcommand{\theequation}{S.\arabic{section}.\arabic{equation}}
\renewcommand{\thetable}{S.\arabic{table}}
\renewcommand{\thefigure}{S.\arabic{figure}}
\renewcommand{\thetheorem}{S.\arabic{theorem}}
\renewcommand{\theproposition}{S.\arabic{theorem}}

\graphicspath{{./}{../Simulation/Result/}{../../Simulation/Result/}}

\makeatletter
\let\supp@saved@safe@footnotetext\safe@footnotetext
\long\def\safe@footnotetext#1{}
\makeatother
\begin{frontmatter}

\title{Supplement to ``Ultra-high-dimensional spot support screening in continuous-time regression''}
\runtitle{Supplement to spot support screening}

\begin{aug}

\author[SA]{\fnms{Haibin}~\snm{Zhu}
\ead[label=se1]{haibinzhu@jnu.edu.cn}}

\address[SA]{Department of Statistics and Data Science,
School of Economics, Jinan University
\printead[presep={,\ }]{se1}}

\end{aug}

\begin{abstract}
This supplement contains proofs of the main results, a factor-type sufficient
condition for Assumption~2, computational and simulation details, additional
numerical checks, and empirical details. All notation, assumptions, and result
numbers refer to the main paper. Throughout the supplement, the symbols
$C,c,c_1,c_2,\ldots$ denote positive constants whose values may change from
line to line.
\end{abstract}

\end{frontmatter}
\makeatletter\let\safe@footnotetext\supp@saved@safe@footnotetext\makeatother

\section{Proof of Theorem 1 and Corollary 1}

\paragraph*{Identification}
Strong orthogonality and $\beta_- = \beta$ outside a $dt\otimes d\Prb$ null set imply
\[
 d\langle X^c,Y^c\rangle_t=C_t\beta_t\,dt,
 \qquad
 d\langle Y^c\rangle_t
 =\{\beta_t^\top C_t\beta_t+c_{\varepsilon\varepsilon,t}\}\,dt.
\]
Consequently,
\[
 c_t=C_t\beta_t,
 \qquad
 r_t=R_t\theta_t,
 \qquad
 \supp(\theta_t)=\supp(\beta_t).
\]
If
\[
 V_t=\diag(C_{11,t},\ldots,C_{p_np_n,t},c_{YY,t}),
\]
then the augmented correlation matrix satisfies
\[
 \mathcal R_t=V_t^{-1/2}\Sigma_tV_t^{-1/2}\succeq0.
\]
All constants below depend only on the fixed bounds in Assumptions~1 and~2 and are independent of the coordinate indices, $n$, and the Brownian dimension.

\paragraph*{Step 1: Continuous-martingale concentration}
Set
\[
 A_t=\int_0^t b'_s\,ds,
 \qquad
 Z'_t=Z_0+A_t+Z_t^c,
 \qquad
 I_i^n=((i-1)\deltan,i\deltan],
\]
and write
\[
 \E_{i-1}^n(\cdot)=\E(\cdot\mid\mathcal F_{(i-1)\deltan}),
 \qquad
 I_{n,k}=I_{n,k}(\tau).
\]
Applying the conditional Burkholder--Davis--Gundy inequality to the continuous martingale increment over $I_i^n$, as in Jacod and Protter (2012) \cite[Eq.~(2.1.34), p.~40]{SuppJacodProtter2012}, and using Assumption~1, for every integer $q\ge2$ we obtain
\begin{equation}
 \E_{i-1}^n\left[
 \sup_{s\in I_i^n}
 \left|Z^c_{a,s}-Z^c_{a,(i-1)\deltan}\right|^q
 \right]
 \le (Cq)^{q/2}\deltan^{q/2}.
 \label{eq:bdg-increment}
\end{equation}

We also record the conditional maximal inequality used below. Let $M$ be a scalar continuous local martingale, let $T$ be a stopping time, and suppose
\[
 \langle M\rangle_{T+u}-\langle M\rangle_T\le Ku,
 \qquad 0\le u\le h.
\]
Put $N_u=M_{T+u}-M_T$. For $\lambda>0$, the exponential process
\[
 \exp\left\{
 \lambda N_u-
 \frac{\lambda^2}{2}
 \bigl(\langle M\rangle_{T+u}-\langle M\rangle_T\bigr)
 \right\}
\]
is a nonnegative supermartingale conditionally on $\mathcal F_T$. Optional stopping at the first crossing of $y$ gives
\[
 \Prb\left(\sup_{0\le u\le h}N_u\ge y\mid\mathcal F_T\right)
 \le \exp\left(-\lambda y+\frac{\lambda^2Kh}{2}\right).
\]
Optimizing at $\lambda=y/(Kh)$ and applying the same argument to $-M$ yields
\begin{equation}
 \Prb\left(
 \sup_{0\le u\le h}|M_{T+u}-M_T|>y
 \mid\mathcal F_T
 \right)
 \le 2\exp\left(-\frac{y^2}{2Kh}\right).
 \label{eq:max-exp}
\end{equation}

For $1\le a,b\le\bar p_n$, define
\begin{equation}
 Q^n_{i,ab}
 =\Delta_i^n Z_a^c\Delta_i^n Z_b^c
 -\int_{I_i^n}\Sigma_{ab,s}\,ds.
 \label{eq:Q-def}
\end{equation}
Integration by parts gives
\[
 Q^n_{i,ab}
 =\int_{I_i^n}
 (Z^c_{a,s}-Z^c_{a,(i-1)\deltan})\,dZ^c_{b,s}
 +\int_{I_i^n}
 (Z^c_{b,s}-Z^c_{b,(i-1)\deltan})\,dZ^c_{a,s}.
\]
Both integrals are martingale increments. Conditional BDG, Assumption~1, and \eqref{eq:bdg-increment} imply
\[
 \E_{i-1}^n|Q^n_{i,ab}|^q
 \le (Cq)^q\deltan^q,
 \qquad q\ge2.
\]
After increasing a fixed constant, Stirling's inequality gives constants $V_0,K_0<\infty$ such that
\begin{equation}
 \E_{i-1}^n|Q^n_{i,ab}|^q
 \le \frac{q!}{2}V_0^2(K_0\deltan)^{q-2}\deltan^2,
 \qquad q\ge2.
 \label{eq:bern-moments}
\end{equation}

For later use, suppose $(\xi_i,\mathcal F_i)_{i=1}^N$ are martingale differences satisfying
\[
 \E_{i-1}|\xi_i|^q
 \le \frac{q!}{2}v_i^2c_0^{q-2},
 \qquad q=2,3,\ldots.
\]
Expanding the conditional moment generating function gives, for $0<\lambda<c_0^{-1}$,
\[
 \E_{i-1}e^{\lambda\xi_i}
 \le
 \exp\left\{
 \frac{\lambda^2v_i^2}{2(1-\lambda c_0)}
 \right\}.
\]
Writing $V_N=\sum_{i=1}^N v_i^2$, iterating conditional expectations and applying Chernoff's inequality yields
\begin{equation}
 \Prb\left(
 \left|\sum_{i=1}^N\xi_i\right|
 \ge \sqrt{2V_Nt}+2c_0t
 \right)
 \le 2e^{-t}.
 \label{eq:bernstein}
\end{equation}

\paragraph*{Step 2: Uniform truncation reduction}
Work first on the event $H_n$ in condition (3) of the main paper. Put
\[
 \gamma(t,z)=\sup_{a\le\bar p_n}(|\delta_a(t,z)|\wedge1).
\]
Assumption~1 implies
\[
 \gamma(t,z)^{r_0}\1_{\{\gamma(t,z)\ne0\}}\le\Gamma(z),
 \qquad
 \int_E\Gamma(z)\lambda(dz)\le K.
\]
When $r_0=0$, the power with the nonzero indicator is interpreted as that indicator. Let
\[
 v=v_{-,n}.
\]
On $H_n$,
\[
 v\le\underline\vartheta_n\le\overline\vartheta_n\le v_{+,n},
 \qquad
 v\asymp\vartheta_n\asymp v_{+,n},
\]
with deterministic constants.

Assumption~1 gives $|\Delta_i^nA_a|\le K\deltan$ and
\[
 \frac{K\deltan}{v}\lesssim\deltan^{1-\varpi}\to0.
\]
For all sufficiently large $n$, $K\deltan\le v/8$. Since
\[
 \Delta_i^nZ'_a=\Delta_i^nA_a+\Delta_i^nZ_a^c,
 \qquad |\Delta_i^nA_a|\le K\deltan,
\]
the event $\{|\Delta_i^nZ'_a|>v/4\}$ implies
\[
 \sup_{s\in I_i^n}|Z^c_{a,s}-Z^c_{a,(i-1)\deltan}|>v/8.
\]
Applying \eqref{eq:max-exp} with $h=\deltan$ and $y=v/8$ therefore gives
\[
 \Prb\left(|\Delta_i^nZ'_a|>v/4\mid\mathcal F_{(i-1)\deltan}\right)
 \le2\exp(-cv^2/\deltan).
\]
A union bound over the $\bar p_n$ coordinates and the at most $k_{\max}$ increments yields
\begin{equation}
 \Prb\left(
 \max_{a\le\bar p_n}
 \max_{i\in I_{n,k_{\max}}}
 |\Delta_i^nZ'_a|>v/4
 \right)
 \le 2\bar p_nk_{\max}\exp(-cv^2/\deltan).
 \label{eq:continuous-small-pre}
\end{equation}
Because $v^2/\deltan\asymp n^{1-2\varpi}$ and the first condition in (6) gives
$\log(\bar p_nk_{\max})=o(n^{1-2\varpi})$, the right-hand side tends to zero. Hence
\begin{equation}
 \Prb\left(
 \max_{a\le\bar p_n}
 \max_{i\in I_{n,k_{\max}}}
 |\Delta_i^nZ'_a|>v/4
 \right)=o(1).
 \label{eq:continuous-small}
\end{equation}
Let $C_n$ denote the event on which the maximum in \eqref{eq:continuous-small} is at most $v/4$.

Within $I_i^n$, define
\[
 N_i(v)=\int_{I_i^n}\int_E\1_{\{\gamma(t,z)>v/8\}}\,\mu(dt,dz),
\]
\[
 S_i(v)=\int_{I_i^n}\int_E
 \gamma(t,z)\1_{\{\gamma(t,z)\le v/8\}}\,\mu(dt,dz),
\]
and define the bad-increment indicator
\[
 B_i(v)=\1_{\{N_i(v)>0\}\cup\{S_i(v)>v/4\}}.
\]
For every $q\ge r_0$,
\begin{equation}
 \gamma(t,z)^q\1_{\{\gamma(t,z)\le v/8\}}
 \le (v/8)^{q-r_0}\Gamma(z).
 \label{eq:gamma-power}
\end{equation}
For the large-jump count,
\[
 \1_{\{\gamma>v/8\}}
 \le (8/v)^{r_0}\gamma^{r_0}\1_{\{\gamma\ne0\}}
 \le (8/v)^{r_0}\Gamma(z).
\]
The compensator identity then gives
\begin{align}
 \E_{i-1}^nN_i(v)
 &=\E_{i-1}^n\int_{I_i^n}\int_E
 \1_{\{\gamma(t,z)>v/8\}}\,\nu(dt,dz)\\
 &\le C\deltan v^{-r_0}.
 \label{eq:N-bound}
\end{align}
Similarly, \eqref{eq:gamma-power} with $q=1$ gives
\[
 \gamma(t,z)\1_{\{\gamma(t,z)\le v/8\}}
 \le(v/8)^{1-r_0}\Gamma(z),
\]
and therefore
\begin{align}
 \E_{i-1}^nS_i(v)
 &=\E_{i-1}^n\int_{I_i^n}\int_E
 \gamma(t,z)\1_{\{\gamma(t,z)\le v/8\}}\,\nu(dt,dz)\\
 &\le C\deltan v^{1-r_0}.
 \label{eq:S-mean}
\end{align}
Let
\[
 M_i(v)=\int_{I_i^n}\int_E
 \gamma(t,z)\1_{\{\gamma(t,z)\le v/8\}}(\mu-\nu)(dt,dz)
\]
and
\[
 A_i(v)=\int_{I_i^n}\int_E
 \gamma(t,z)\1_{\{\gamma(t,z)\le v/8\}}\nu(dt,dz).
\]
Then $S_i(v)=M_i(v)+A_i(v)$. The conditional isometry for compensated Poisson integrals gives
\begin{align*}
 \E_{i-1}^nM_i(v)^2
 &=\E_{i-1}^n\int_{I_i^n}\int_E
 \gamma(t,z)^2\1_{\{\gamma(t,z)\le v/8\}}\,\nu(dt,dz)\\
 &\le C\deltan v^{2-r_0},
\end{align*}
where \eqref{eq:gamma-power} is used with $q=2$. The same bound with $q=1$ gives the pathwise estimate
\[
 0\le A_i(v)\le C\deltan v^{1-r_0}.
\]
Consequently,
\[
\E_{i-1}^nS_i(v)^2
\le
2\E_{i-1}^nM_i(v)^2
+
2\E_{i-1}^nA_i(v)^2.
\]
Since
\[
0\le A_i(v)\le C\deltan v^{1-r_0}
\]
pathwise, it follows that
\[
\E_{i-1}^nS_i(v)^2
\le
C\deltan v^{2-r_0}
+
C\deltan^2v^{2-2r_0}.
\]
Thus
\begin{equation}
 \E_{i-1}^nS_i(v)^2
 \le C\deltan v^{2-r_0}+C\deltan^2v^{2-2r_0}.
 \label{eq:S-second}
\end{equation}
Since
\[
 \1_{\{N_i(v)>0\}}\le N_i(v),
 \qquad
 \1_{\{S_i(v)>v/4\}}\le \frac{4S_i(v)}{v},
\]
we have
\[
 B_i(v)\le N_i(v)+\frac{4S_i(v)}{v}.
\]
Taking conditional expectations and using \eqref{eq:N-bound} and \eqref{eq:S-mean} gives
\begin{equation}
 \E_{i-1}^nB_i(v)\le C\deltan v^{-r_0}.
 \label{eq:B-bound}
\end{equation}

On $C_n\cap\{B_i(v)=0\}$, there is no jump mark with $\gamma>v/8$, and the total variation of the remaining jumps is at most $v/4$. Thus
\[
 |\Delta_i^n Z_a|
 \le |\Delta_i^nZ'_a|+|\Delta_i^nJ_a^Z|
 \le v/2
 \le\vartheta_{a,n},
 \qquad a=1,\ldots,\bar p_n.
\]
No coordinate is truncated on such an increment. Define
\[
 D^n_{i,ab}
 =\widetilde\Delta_i^nZ_a\widetilde\Delta_i^nZ_b
 -\Delta_i^nZ'_a\Delta_i^nZ'_b.
\]
On a good increment, $B_i(v)=0$, no coordinate is truncated, and
\[
 \Delta_i^nZ_a=\Delta_i^nZ'_a+\Delta_i^nJ_a^Z.
\]
Since $|\Delta_i^nJ_a^Z|\le S_i(v)$ for every coordinate,
\begin{align*}
 |D^n_{i,ab}|
 &\le |\Delta_i^nZ'_a|S_i(v)
 +|\Delta_i^nZ'_b|S_i(v)+S_i(v)^2.
\end{align*}
On a bad increment, the truncated product is bounded by
$\vartheta_{a,n}\vartheta_{b,n}\le v_{+,n}^2$, while on $C_n$ the jump-free product is bounded by $v^2/16\le Cv_{+,n}^2$. Hence, for every fixed pair $(a,b)$,
\begin{equation}
 |D^n_{i,ab}|
 \le C\{|\Delta_i^nZ'_a|+|\Delta_i^nZ'_b|\}S_i(v)
 +CS_i(v)^2+Cv_{+,n}^2B_i(v).
 \label{eq:D-bound}
\end{equation}
This coordinate-specific form is useful when the maximum over $(a,b)$ is taken after summing over $i$. Indeed, for every $k\in\cK$, Cauchy--Schwarz gives
\begin{align*}
 &\max_{a,b}\frac1{k\deltan}\sum_{i\in I_{n,k}}
 \{|\Delta_i^nZ'_a|+|\Delta_i^nZ'_b|\}S_i(v)\\
 &\qquad\le 2\left\{
 \max_a\frac1{k\deltan}\sum_{i\in I_{n,k}}(\Delta_i^nZ'_a)^2
 \right\}^{1/2}
 \left\{
 \frac1{k\deltan}\sum_{i\in I_{n,k}}S_i(v)^2
 \right\}^{1/2}.
\end{align*}
Therefore
\begin{align}
 \bnorm{\widehat\Sigma^{\mathrm{tr}}(k)-\widehat\Sigma'(k)}_{\max}
 &\le C\left\{
 \max_a\frac{1}{k\deltan}\sum_{i\in I_{n,k}}(\Delta_i^nZ'_a)^2
 \right\}^{1/2}
 \left\{
 \frac{1}{k\deltan}\sum_{i\in I_{n,k}}S_i(v)^2
 \right\}^{1/2}
 \notag\\
 &\quad +\frac{C}{k\deltan}\sum_{i\in I_{n,k}}S_i(v)^2
 +\frac{Cv_{+,n}^2}{k\deltan}\sum_{i\in I_{n,k}}B_i(v).
 \label{eq:truncation-decomp}
\end{align}

We now control the first factor. Since $\Delta_i^nZ'_a=\Delta_i^nZ_a^c+\Delta_i^nA_a$,
\[
 (\Delta_i^nZ'_a)^2\le2(\Delta_i^nZ_a^c)^2+2K^2\deltan^2.
\]
For the continuous martingale part, \eqref{eq:Q-def} with $a=b$ gives
\[
 \frac1{k\deltan}\sum_{i\in I_{n,k}}(\Delta_i^nZ_a^c)^2
 =\frac1{k\deltan}\int_{\tau_n}^{\tau_n+k\deltan}\Sigma_{aa,s}\,ds
 +\frac1{k\deltan}\sum_{i\in I_{n,k}}Q^n_{i,aa}.
\]
The first term is bounded by $K$. In \eqref{eq:bernstein}, take
\[
 v_i^2=V_0^2\deltan^2,
 \qquad c_0=K_0\deltan,
 \qquad V_N=kV_0^2\deltan^2.
\]
After dividing the Bernstein bound by $k\deltan$, we obtain
\[
 \left|\frac1{k\deltan}\sum_{i\in I_{n,k}}Q^n_{i,aa}\right|
 \le C\left\{\sqrt{\frac{t}{k}}+\frac{t}{k}\right\}
\]
except on an event of probability at most $2e^{-t}$. Set $t=\Lambda_n(x)$ and take a union bound over $a\le\bar p_n$ and $k\in\cK$. The resulting failure probability is at most $Ce^{-x}$. Condition (10) makes the displayed stochastic term uniformly bounded. Hence
\begin{equation}
 \max_{k\in\cK}\max_{a\le\bar p_n}
 \frac{1}{k\deltan}\sum_{i\in I_{n,k}}(\Delta_i^nZ'_a)^2
 \le C
 \label{eq:quadratic-average}
\end{equation}
on an event whose failure probability is at most $Ce^{-x}$. For each fixed $k$, \eqref{eq:S-second} gives
\[
 \E\left[\frac1{k\deltan}\sum_{i\in I_{n,k}}S_i(v)^2\right]
 \le C\{v^{2-r_0}+\deltan v^{2-2r_0}\}.
\]
For any $M>0$, Markov's inequality and a union bound over $k\in\cK$ give
\begin{align*}
 &\Prb\left(
 \max_{k\in\cK}\frac1{k\deltan}\sum_{i\in I_{n,k}}S_i(v)^2
 >M|\cK|\{v^{2-r_0}+\deltan v^{2-2r_0}\}
 \right)\\
 &\qquad\le \frac{C}{M}.
\end{align*}
Therefore
\begin{equation}
 \max_{k\in\cK}
 \frac{1}{k\deltan}\sum_{i\in I_{n,k}}S_i(v)^2
 =O_{\Prb}\left(
 |\cK|\{v^{2-r_0}+\deltan v^{2-2r_0}\}
 \right).
 \label{eq:S-sum}
\end{equation}
Similarly, for each fixed $k$, \eqref{eq:B-bound} yields
\[
 \E\left[
 \frac{v_{+,n}^2}{k\deltan}\sum_{i\in I_{n,k}}B_i(v)
 \right]
 \le Cv_{+,n}^2v^{-r_0}
 \asymp C\vartheta_n^{2-r_0}.
\]
Markov's inequality followed by a union bound over the candidate grid gives
\begin{equation}
 \max_{k\in\cK}
 \frac{v_{+,n}^2}{k\deltan}\sum_{i\in I_{n,k}}B_i(v)
 =O_{\Prb}(|\cK|\vartheta_n^{2-r_0}).
 \label{eq:B-sum}
\end{equation}
Since
\[
 \frac{\deltan v^{2-2r_0}}{v^{2-r_0}}
 =\deltan v^{-r_0}
 \asymp\deltan^{1-\varpi r_0}=o(1),
\]
we have
\[
 \{v^{2-r_0}+\deltan v^{2-2r_0}\}^{1/2}
 =O(v^{1-r_0/2}).
\]
The first term in \eqref{eq:truncation-decomp} is therefore
$O_{\Prb}(|\cK|^{1/2}\vartheta_n^{1-r_0/2})$, while the second and third terms are both
$O_{\Prb}(|\cK|\vartheta_n^{2-r_0})$. Combining \eqref{eq:truncation-decomp}--\eqref{eq:B-sum} gives
\[
 \sup_{k\in\cK}
 \bnorm{\widehat\Sigma^{\mathrm{tr}}(k)-\widehat\Sigma'(k)}_{\max}
 =O_{\Prb}\left(
 |\cK|^{1/2}\vartheta_n^{1-r_0/2}
 +|\cK|\vartheta_n^{2-r_0}
 \right).
\]
The second condition in (6) of the main paper therefore implies
\begin{equation}
 \sup_{k\in\cK}
 \frac{\bnorm{\widehat\Sigma^{\mathrm{tr}}(k)-\widehat\Sigma'(k)}_{\max}}
 {a_{n,k}(x)+n^{-1/2}}
 =o_{\Prb}(1).
 \label{eq:relative-truncation}
\end{equation}
This is the relative truncation bound required below.

\paragraph*{Step 3: Covariance and correlation bounds}
Recall
\[
 \Sigma(k)
 =\frac{1}{k\deltan}\int_{\tau_n}^{\tau_n+k\deltan}\Sigma_s\,ds.
\]
For every coordinate pair,
\[
 \widehat\Sigma'_{ab}(k)-\Sigma_{ab}(k)
 =\frac{1}{k\deltan}\sum_{i\in I_{n,k}}Q^n_{i,ab}+D^n_{ab}(k),
\]
where
\[
 D^n_{ab}(k)
 =\frac{1}{k\deltan}\sum_{i\in I_{n,k}}
 \left\{
 \Delta_i^nA_a\Delta_i^nZ_b^c
 +\Delta_i^nA_b\Delta_i^nZ_a^c
 +\Delta_i^nA_a\Delta_i^nA_b
 \right\}.
\]
Applying \eqref{eq:bernstein} to the martingale term and taking a union bound over coordinate pairs and candidate windows gives
\[
 \max_{a,b}
 \left|
 \frac{1}{k\deltan}\sum_{i\in I_{n,k}}Q^n_{i,ab}
 \right|
 \le Ca_{n,k}(x)
\]
simultaneously over $k\in\cK$, with failure probability at most $Ce^{-x}$. On the event \eqref{eq:quadratic-average}, the identity
$\Delta_i^nZ_b^c=\Delta_i^nZ'_b-\Delta_i^nA_b$ and $|\Delta_i^nA_b|\le K\deltan$ imply
\[
 \max_{k,b}\frac1{k\deltan}
 \sum_{i\in I_{n,k}}(\Delta_i^nZ_b^c)^2\le C.
\]
Since $|\Delta_i^nA_a|\le K\deltan$, Cauchy--Schwarz gives
\begin{align*}
 \frac1{k\deltan}\sum_{i\in I_{n,k}}
 |\Delta_i^nA_a\Delta_i^nZ_b^c|
 &\le \frac{K}{k}\sum_{i\in I_{n,k}}|\Delta_i^nZ_b^c|\\
 &\le \frac{K}{\sqrt{k}}
 \left\{\sum_{i\in I_{n,k}}(\Delta_i^nZ_b^c)^2\right\}^{1/2}\\
 &\le C\deltan^{1/2}.
\end{align*}
The same argument applies to the second cross term after interchanging $a$ and $b$. For the product of the two drift increments,
\[
 \frac{1}{k\deltan}
 \sum_{i\in I_{n,k}}
 |\Delta_i^nA_a\Delta_i^nA_b|
 \le \frac{kK^2\deltan^2}{k\deltan}
 =K^2\deltan.
\]
Consequently,
\[
 \bnorm{\widehat\Sigma'(k)-\Sigma(k)}_{\max}
 \le C\{a_{n,k}(x)+n^{-1/2}\}
\]
simultaneously over $k\in\cK$. On $G_n$, Assumption~2 adds the localization bias, giving
\[
 \bnorm{\widehat\Sigma'(k)-\Sigma_\tau}_{\max}
 \le C\left\{
 a_{n,k}(x)+(k/n)^\alpha+n^{-1/2}
 \right\}.
\]
Together with \eqref{eq:relative-truncation}, this yields
\begin{equation}
 \bnorm{\widehat\Sigma^{\mathrm{tr}}(k)-\Sigma_\tau}_{\max}
 \le C\delta_{n,k}(x),
 \qquad k\in\cK,
 \label{eq:covariance-final}
\end{equation}
on an event of probability at least $1-Ce^{-cx}-o(1)$.

The feasible covariance matrix is positive semidefinite because
\[
 u^\top\widehat\Sigma^{\mathrm{tr}}(k)u
 =\frac{1}{k\deltan}
 \sum_{i\in I_{n,k}}
 \{u^\top\widetilde\Delta_i^nZ\}^2\ge0.
\]
To pass from covariance to correlation, for a matrix $M$ with positive diagonal define
\[
 D(M)=\diag(M_{11},\ldots,M_{dd}),
 \qquad
 \mathcal C(M)=D(M)^{-1/2}MD(M)^{-1/2}.
\]
Each off-diagonal entry of $\mathcal C(M)$ has the form
\[
 g(x,y,z)=x(yz)^{-1/2},
 \qquad x=M_{ab},\quad y=M_{aa},\quad z=M_{bb}.
\]
Its derivatives are
\[
 \partial_xg=(yz)^{-1/2},\qquad
 \partial_yg=-\frac{x}{2y^{3/2}z^{1/2}},\qquad
 \partial_zg=-\frac{x}{2y^{1/2}z^{3/2}}.
\]
Suppose $\bnorm{\widetilde M-M}_{\max}\le c_-/2$ and $c_-\le M_{aa}\le c_+$ for every $a$. Along the segment $M_t=M+t(\widetilde M-M)$, the diagonal entries stay in a fixed interval bounded away from zero. Since the positive-semidefinite cone is convex, $M_t\succeq0$ whenever both endpoints are positive semidefinite. Thus
\[
 |(M_t)_{ab}|\le\{(M_t)_{aa}(M_t)_{bb}\}^{1/2}\le C(c_+).
\]
The three derivatives are therefore uniformly bounded by a constant depending only on $c_-$ and $c_+$. The mean-value theorem in the three variables $(x,y,z)$ gives
\begin{equation}
 \bnorm{\mathcal C(\widetilde M)-\mathcal C(M)}_{\max}
 \le C(c_-,c_+)\bnorm{\widetilde M-M}_{\max}
 \label{eq:corr-map}
\end{equation}
whenever $\bnorm{\widetilde M-M}_{\max}\le c_-/2$ and the diagonal entries of $M$ lie in $[c_-,c_+]$.
Assumption~2 gives $\Sigma_{aa,\tau}\ge c_-$ for every augmented coordinate. On the event \eqref{eq:covariance-final}, condition (10) implies, for all sufficiently large $n$,
\[
 \max_{k\in\cK}\bnorm{\widehat\Sigma^{\mathrm{tr}}(k)-\Sigma_\tau}_{\max}<c_-/2.
\]
Hence
\[
 \widehat\Sigma^{\mathrm{tr}}_{aa}(k)\ge c_-/2>v_n
\]
for every coordinate and candidate window, so the variance floor is inactive. Applying \eqref{eq:corr-map} to $M=\Sigma_\tau$ and $\widetilde M=\widehat\Sigma^{\mathrm{tr}}(k)$, and then taking the covariate block and response column, gives
\begin{equation}
 \bnorm{\widehat r(k)-r_\tau}_\infty
 \vee
 \bnorm{\widehat R(k)-R_\tau}_{\max}
 \le C\delta_{n,k}(x),
 \qquad k\in\cK.
 \label{eq:correlation-final}
\end{equation}
This proves Theorem~1.

\paragraph*{Step 4: Uniform active-set Lepski adaptation}
We use the bias--stochastic-error comparison underlying Lepski's method
\cite{SuppLepski1991}, while keeping the argument uniform over the active sets.
On the event in \eqref{eq:correlation-final}, fix any
$A\subseteq\{1,\ldots,p_n\}$ with $|A|\le q_n$ and let
\[
 T_{0,A}=(r_\tau,R_{\cdot A,\tau}),
 \qquad
 \widehat T_A(k)=(\widehat r(k),\widehat R_{\cdot A}(k)),
\]
with norm
\[
 \bnorm{(u,V)}_{\star,A}=\bnorm{u}_\infty\vee\bnorm{V}_{\max}.
\]
The matrix term is absent when $A=\varnothing$. Every active-column error is a restriction of the full entrywise error. Thus the following bounds hold simultaneously for all such $A$. Set
\[
 s_k=a_{n,k}(x),
 \qquad
 \beta_k=C\{(k/n)^\alpha+n^{-1/2}\}.
\]
Then
\begin{equation}
 \bnorm{\widehat T_A(k)-T_{0,A}}_{\star,A}
 \le C_0(s_k+\beta_k),
 \qquad k\in\cK.
 \label{eq:TA-bound}
\end{equation}
The sequence $s_k$ is nonincreasing and $\beta_k$ is nondecreasing. Put
\[
 u=\Lambda_n(x),
 \qquad
 k_n^\star=n^{2\alpha/(2\alpha+1)}u^{1/(2\alpha+1)},
 \qquad
 \rho_n=(u/n)^{\alpha/(2\alpha+1)}.
\]
The grid assumption gives $k_0\in\cK$ and fixed constants $0<c_-^\star\le c_+^\star<\infty$ such that
\[
 c_-^\star k_n^\star\le k_0\le c_+^\star k_n^\star.
\]
Direct calculation yields
\[
 \sqrt{u/k_n^\star}=\rho_n,
 \qquad
 (k_n^\star/n)^\alpha=\rho_n,
 \qquad
 u/k_n^\star=o(\rho_n).
\]
Also $n^{-1/2}\le\rho_n$. Hence
\[
 c\rho_n\le s_{k_0}\le C\rho_n,
 \qquad
 \beta_{k_0}\le C\rho_n.
\]
Choose a fixed $\kappa_0$ such that $\beta_{k_0}\le\kappa_0s_{k_0}$ and define
\[
 k^\circ=\max\{k\in\cK:\beta_k\le\kappa_0s_k\}.
\]
For every $\ell\le k^\circ$, the triangle inequality and \eqref{eq:TA-bound} give
\begin{align*}
 \bnorm{\widehat T_A(k^\circ)-\widehat T_A(\ell)}_{\star,A}
 &\le \bnorm{\widehat T_A(k^\circ)-T_{0,A}}_{\star,A}
 +\bnorm{\widehat T_A(\ell)-T_{0,A}}_{\star,A}\\
 &\le C_0\{s_{k^\circ}+\beta_{k^\circ}+s_\ell+\beta_\ell\}.
\end{align*}
Because $\ell\le k^\circ$, we have $s_\ell\ge s_{k^\circ}$ and
$\beta_\ell\le\beta_{k^\circ}\le\kappa_0s_{k^\circ}\le\kappa_0s_\ell$. Therefore
\[
 \bnorm{\widehat T_A(k^\circ)-\widehat T_A(\ell)}_{\star,A}
 \le C_0(1+\kappa_0)(s_{k^\circ}+s_\ell).
\]
Taking $C_L\ge2C_0(1+\kappa_0)$ makes $k^\circ$ admissible in the Lepski rule, so $\widehat k(A)\ge k^\circ$. The defining inequality for the accepted window $\widehat k(A)$ can therefore be applied with $\ell=k^\circ$. Hence
\begin{align*}
 \bnorm{\widehat T_A\{\widehat k(A)\}-T_{0,A}}_{\star,A}
 &\le \bnorm{\widehat T_A\{\widehat k(A)\}-\widehat T_A(k^\circ)}_{\star,A}
 +\bnorm{\widehat T_A(k^\circ)-T_{0,A}}_{\star,A}\\
 &\le C_L\{s_{\widehat k(A)}+s_{k^\circ}\}
 +C_0\{s_{k^\circ}+\beta_{k^\circ}\}\\
 &\le \{2C_L+C_0(1+\kappa_0)\}s_{k^\circ},
\end{align*}
where $s_{\widehat k(A)}\le s_{k^\circ}$ because $s_k$ is nonincreasing.
Because $k^\circ\ge k_0$, we have $s_{k^\circ}\le C\rho_n$. Conversely, fix $k\in\cK$. If $k\le k_n^\star$, then
\[
 s_k\ge\sqrt{u/k}\ge\sqrt{u/k_n^\star}=\rho_n.
\]
If $k\ge k_n^\star$, then
\[
 \beta_k\ge c(k/n)^\alpha\ge c(k_n^\star/n)^\alpha=c\rho_n.
\]
Thus $s_k+\beta_k\ge c\rho_n$ in either case. Also $\beta_k\ge Cn^{-1/2}$ for every $k$. Therefore
\[
 s_k+\beta_k\ge c\max\{\rho_n,n^{-1/2}\}
 \ge \frac c2(\rho_n+n^{-1/2}).
\]
At $k_0$, the previous bounds give $s_{k_0}+\beta_{k_0}\le C(\rho_n+n^{-1/2})$. Hence
\[
 \inf_{k\in\cK}(s_k+\beta_k)
 \asymp
 \left\{\frac{\Lambda_n(x)}{n}\right\}^{\alpha/(2\alpha+1)}
 +n^{-1/2}.
\]
This proves Corollary~1. Since every active set is controlled on the same event \eqref{eq:correlation-final}, the conclusion is simultaneous over data-dependent active sets. For diffusion-driven stochastic covariance, the generic interpretation uses any fixed $\alpha<1/2$. The endpoint $\alpha=1/2$ requires the stronger regularity stated in Assumption~2.

\section{A factor-type sufficient condition for Assumption 2}

The local regularity condition in Assumption~2 can be verified under a finite-dimensional factor representation of the augmented covariance process. The following proposition gives one sufficient condition.

\begin{proposition}\label{prop:suff-ass2}
Suppose that, on events $G_n^\Sigma$ with $\Prb(G_n^\Sigma)\to1$, the augmented covariance process admits the representation
\[
 \Sigma_t=A_{0,n}+\sum_{\nu=1}^{d_\Sigma}g_{\nu,t}A_{\nu,n},
 \qquad d_\Sigma\le d_0<\infty,
\]
where
\[
 \max_{\nu\le d_\Sigma}\bnorm{A_{\nu,n}}_{\max}\le C_A
\]
and, on $I_n^{\max}$,
\[
 |g_{\nu,t}-g_{\nu,\tau}|\le L_g|t-\tau|^\alpha
\]
for every $\nu\le d_\Sigma$. Suppose also that the diagonal entries of $\Sigma_t$ are uniformly bounded above and away from zero on $I_n^{\max}$. Then Assumption~2 holds with $G_n=G_n^\Sigma$ and with a constant $L_\Sigma$ that may be taken as
\[
 L_\Sigma=2^\alpha d_0C_AL_g.
\]
\end{proposition}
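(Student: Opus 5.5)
The plan is to verify the two parts of Assumption~2 separately on the event $G_n=G_n^\Sigma$. The variance bounds \eqref{eq:variance-bounds} are exactly the hypothesis that the diagonal entries of $\Sigma_t$ are uniformly bounded above and away from zero on $I_n^{\max}$. So the only work is the averaged H\"older bound \eqref{eq:average-holder}, and $\Pp(G_n^\Sigma)\to1$ comes for free.

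For \eqref{eq:average-holder}, fix $k\in\calK_n$, write $h=k\Delta_n$, and fix an entry $(a,b)$. The factor representation gives
\[
  \Sigma_{ab,s}-\Sigma_{ab,\tau}=\sum_{\nu=1}^{d_\Sigma}(g_{\nu,s}-g_{\nu,\tau})(A_{\nu,n})_{ab}.
\]
The constant matrix $A_{0,n}$ cancels, so no bound on it is needed. By the triangle inequality, the entry bound $C_A$, and the H\"older condition on $I_n^{\max}$ (which contains $[\tau_n,\tau_n+h]$), the deviation of the local average is at most
\[
  d_0C_AL_g\,\frac1h\int_{\tau_n}^{\tau_n+h}|s-\tau|^\alpha\,\dd s .
\]
This bound is uniform in $(a,b)$, so it is also a bound for the maximum over all entries.

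The remaining step is the elementary integral bound, done exactly as in the verification for the lower-bound alternatives. Since $0\le\tau_n-\tau<\Delta_n\le h$, every $s\in[\tau_n,\tau_n+h]$ satisfies $0\le s-\tau\le 2h$. Hence $|s-\tau|^\alpha\le(2h)^\alpha$ and the average is at most $2^\alpha h^\alpha$. This gives the constant $L_\Sigma=2^\alpha d_0C_AL_g$, uniformly over $k\in\calK_n$ on $G_n^\Sigma$.

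One subtlety needs care: the H\"older condition must be applied at points $s\ge\tau_n\ge\tau$ while comparing with $g_{\nu,\tau}$. I read the hypothesis as bounding $|g_{\nu,t}-g_{\nu,\tau}|$ for $t\in I_n^{\max}$, with $\tau$ itself used as the reference point even though $\tau$ may lie just below $\tau_n$. Under that reading no further argument is needed. Otherwise I would first compare $g_{\nu,s}$ with $g_{\nu,\tau_n}$ and then bound $g_{\nu,\tau_n}-g_{\nu,\tau}$ by $L_g\Delta_n^\alpha\le L_gh^\alpha$ using right-continuity. That route only changes the constant.
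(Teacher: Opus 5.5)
Your proposal is correct and matches the paper's proof essentially line by line. You cancel $A_{0,n}$, bound each entry's deviation by $d_0C_AL_g|t-\tau|^\alpha$ for $t\in I_n^{\max}$, use $0\le\tau_n-\tau<\Delta_n\le h$ to get $|t-\tau|\le 2h$ on the window, and read the variance bounds directly off the hypothesis. The paper adopts your primary reading of the H\"older hypothesis (comparison with $g_{\nu,\tau}$ for $t\in I_n^{\max}$), so the fallback is unnecessary. That is just as well, because right-continuity alone would not give the rate $L_g\Delta_n^\alpha$ for $g_{\nu,\tau_n}-g_{\nu,\tau}$.
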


This representation includes, for example, a common stochastic volatility scale multiplying a fixed correlation matrix, finite-factor covariance variation, deterministic intraday scales, and low-dimensional time-varying correlation blocks. Diffusion-driven factor paths can satisfy the required local bound for any fixed $\alpha<1/2$ on suitable high-probability localization events.

\begin{proof}
On $G_n^\Sigma$, for every coordinate pair $(a,b)$ and every $t\in I_n^{\max}$,
\[
 |\Sigma_{ab,t}-\Sigma_{ab,\tau}|
 \le\sum_{\nu=1}^{d_\Sigma}
 |g_{\nu,t}-g_{\nu,\tau}|\,|(A_{\nu,n})_{ab}|
 \le d_0C_AL_g|t-\tau|^\alpha.
\]
For $h=k\deltan$ with $k\in\cK$, the definition of $\tau_n$ gives
\[
 0\le\tau_n-\tau<\deltan\le h.
\]
Hence $|t-\tau|\le2h$ on $[\tau_n,\tau_n+h]$. For each coordinate pair,
\begin{align*}
 \left|
 \frac1h\int_{\tau_n}^{\tau_n+h}\Sigma_{ab,t}\,dt
 -\Sigma_{ab,\tau}
 \right|
 &\le\frac1h\int_{\tau_n}^{\tau_n+h}|\Sigma_{ab,t}-\Sigma_{ab,\tau}|\,dt\\
 &\le\frac{d_0C_AL_g}{h}\int_{\tau_n}^{\tau_n+h}|t-\tau|^\alpha\,dt\\
 &\le2^\alpha d_0C_AL_gh^\alpha.
\end{align*}
Taking the maximum over $(a,b)$ gives the required averaged H\"older bound.
The assumed diagonal bounds give the variance bounds in Assumption~2. This proves the claim.
\end{proof}

\section{Proof of Theorem 2}

If $S_\tau^m=\varnothing$, the conclusion is immediate. Suppose $S_\tau^m\ne\varnothing$ and let
\[
 E_m=\left\{
 \bnorm{\widehat r(\widehat k_m)-r_\tau}_\infty\le a_m/4
 \right\}.
\]
The signal condition and Corollary~1 with $A=\varnothing$ imply
\[
 \Prb(E_m)\ge1-Ce^{-cx}-o(1).
\]
On $E_m$, every $j\in S_\tau^m$ satisfies
\[
 |\widehat r_j(\widehat k_m)|
 \ge |r_{j,\tau}|-a_m/4
 \ge3a_m/4.
\]
If a coordinate $\ell$ is ranked no lower than $j$, then
\[
 |\widehat r_\ell(\widehat k_m)|\ge3a_m/4,
\]
and another use of $E_m$ gives
\[
 |r_{\ell,\tau}|\ge a_m/2.
\]
Let
\[
 H_m=\{\ell:|r_{\ell,\tau}|\ge a_m/2\}.
\]
Every coordinate ranked no lower than an active $j$ belongs to $H_m$. For every $\ell\in H_m$,
$|r_{\ell,\tau}|^2\ge a_m^2/4$, so
\[
 |H_m|\frac{a_m^2}{4}
 \le\sum_{\ell\in H_m}r_{\ell,\tau}^2
 \le\sum_{\ell=1}^{p_n}r_{\ell,\tau}^2
 =B_m.
\]
Hence $|H_m|\le4B_m/a_m^2\le d_n$. Since every coordinate ranked no lower than an active variable lies in $H_m$, every active marginal coordinate has rank at most $d_n$. The deterministic increasing-index rule resolves equal scores at the cutoff consistently, so it does not change this conclusion. Hence
\[
 S_\tau^m\subseteq\widehat S^m(d_n)
\]
on $E_m$, which proves Theorem~2.

\section{Proof of Theorem 3}

\paragraph*{Step 1: List-Fano inequality}
Let $H(\cdot)$ denote Shannon entropy and $I(J;V)$ mutual information. For probability measures $P\ll Q$, write
\[
 \KL(P,Q)=\int\log\left(\frac{dP}{dQ}\right)dP.
\]
Let $J$ be uniform on $\{1,\ldots,M\}$, let $V\mid J=j$ have law $P_j$, and let $\widehat L(V)$ be a measurable list satisfying $|\widehat L(V)|\le d<M$. Define
\[
 E=\1_{\{J\notin\widehat L(V)\}},
 \qquad
 P_e=\Prb(E=1).
\]
Since $E$ is determined by $(J,V)$, $H(E\mid J,V)=0$, and therefore
\[
 H(J\mid V)=H(E,J\mid V)=H(E\mid V)+H(J\mid E,V).
\]
We have $H(E\mid V)\le\log2$. On $E=0$, the index $J$ belongs to a list of size at most $d$, so
$H(J\mid E=0,V)\le\log d$. On $E=1$, the trivial bound is
$H(J\mid E=1,V)\le\log M$. Averaging over $E$ yields
\[
 H(J\mid V)
 \le\log2+(1-P_e)\log d+P_e\log M.
\]
Using $H(J)=\log M=I(J;V)+H(J\mid V)$ gives
\begin{equation}
 P_e
 \ge1-\frac{I(J;V)+\log2}{\log(M/d)}.
 \label{eq:list-fano}
\end{equation}
Let $\overline P=M^{-1}\sum_{j=1}^MP_j$. For any $Q$ dominating these laws,
\[
 \frac1M\sum_{j=1}^M\KL(P_j,Q)
 =\frac1M\sum_{j=1}^M\KL(P_j,\overline P)
 +\KL(\overline P,Q)
 =I(J;V)+\KL(\overline P,Q).
\]
Therefore
\begin{equation}
 I(J;V)\le\frac1M\sum_{j=1}^M\KL(P_j,Q).
 \label{eq:mutual-kl}
\end{equation}

\paragraph*{Step 2: Kullback divergence}
Let $P_0^{(n)}$ be the law under
\[
 dX_t=dB_t^X,
 \qquad
 dY_t=dW_t.
\]
Under the $j$th alternative, all coordinates except $(X_j,Y)$ have the same law as under the null and are independent of that pair. The full likelihood ratio therefore factors over the $n$ bivariate increments. On $I_i^n$,
\[
 \operatorname{Var}(\Delta_i^nX_j)=\deltan,
 \qquad
 \operatorname{Var}(\Delta_i^nY)=\deltan,
\]
and
\[
 \operatorname{Cov}(\Delta_i^nX_j,\Delta_i^nY)
 =\int_{I_i^n}f_{j,a}(s)\,ds.
\]
The standardized pair has correlation
\[
 \rho_i=\deltan^{-1}\int_{I_i^n}f_{j,a}(s)\,ds.
\]
After dividing both coordinates by $\sqrt{\deltan}$, the null covariance matrix is $I_2$ and the alternative covariance matrix is
\[
 \Sigma_i=\begin{pmatrix}1&\rho_i\\ \rho_i&1\end{pmatrix}.
\]
The same nonsingular rescaling is applied under both laws, so the Kullback divergence is unchanged. For centered Gaussian laws,
\begin{align*}
 \KL\{N(0,\Sigma_i),N(0,I_2)\}
 &=\frac12\{\tr(\Sigma_i)-2-\log\det(\Sigma_i)\}\\
 &=-\frac12\log(1-\rho_i^2).
\end{align*}
Since $\bnorm{f_{j,a}}_\infty\le1/2$, we have $\rho_i^2\le1/4$ and
\[
 -\frac12\log(1-\rho_i^2)\le\rho_i^2.
\]
Jensen's inequality gives
\[
 \rho_i^2
 \le\deltan^{-1}\int_{I_i^n}f_{j,a}(s)^2\,ds.
\]
Summing over the independent increments yields
\[
 \KL(P_{j,a}^{(n)},P_0^{(n)})
 \le n\int_0^1f_{j,a}(t)^2\,dt.
\]
When $[\tau-h_a,\tau+h_a]\subset(0,1)$, the substitution $u=(t-\tau)/h_a$ gives
\[
 \int_0^1f_{j,a}(t)^2\,dt
 =a^2h_a\int_{\R}\psi(u)^2\,du
 \le C_\psi L_\Sigma^{-1/\alpha}a^{(2\alpha+1)/\alpha}.
\]
Hence
\begin{equation}
 \KL(P_{j,a}^{(n)},P_0^{(n)})
 \le C_\psi nL_\Sigma^{-1/\alpha}a^{(2\alpha+1)/\alpha}.
 \label{eq:lower-kl}
\end{equation}

\paragraph*{Step 3: Completion of the minimax bound}
Let $\widehat S$ be any measurable list-valued procedure based on $\mathcal O_n$ with $|\widehat S|\le d_n$ almost surely. In \eqref{eq:list-fano} and \eqref{eq:mutual-kl}, take
\[
 M=p_n,
 \qquad
 d=d_n,
 \qquad
 V=\mathcal O_n,
 \qquad
 Q=P_0^{(n)}.
\]
Equation \eqref{eq:lower-kl} gives
\begin{equation}
 \frac1{p_n}\sum_{j=1}^{p_n}
 P_{j,a_n}^{(n)}\{j\notin\widehat S\}
 \ge
 1-
 \frac{C_\psi nL_\Sigma^{-1/\alpha}a_n^{(2\alpha+1)/\alpha}+\log2}
 {\log(p_n/d_n)}.
 \label{eq:lower-risk}
\end{equation}
By the signal assumption in Theorem~3,
\[
a_n
\le
c_1
\left\{
\frac{\log(p_n/d_n)}{n}
\right\}^{\alpha/(2\alpha+1)}.
\]
Raising both sides to the power $(2\alpha+1)/\alpha$ and multiplying by
$nL_\Sigma^{-1/\alpha}$ gives
\[
nL_\Sigma^{-1/\alpha}
a_n^{(2\alpha+1)/\alpha}
\le
L_\Sigma^{-1/\alpha}
c_1^{(2\alpha+1)/\alpha}
\log(p_n/d_n).
\]
Thus the first ratio on the right-hand side of \textup{(S.4.4)} is at most
\[
C_\psi
L_\Sigma^{-1/\alpha}
c_1^{(2\alpha+1)/\alpha}.
\]
Since the constant $c_1$ in Theorem~3 may depend on
$\alpha$, $L_\Sigma$, and $\psi$, it can be chosen sufficiently small so that
the last display is no larger than $1/4$.
Since $d_n<p_n/4$,
\[
 \frac{\log2}{\log(p_n/d_n)}\le\frac{\log2}{\log4}=\frac12.
\]
The right-hand side of \eqref{eq:lower-risk} is therefore bounded below by $1/4$ after reducing $c_1$ if necessary.

The remaining assumptions ensure $a_n\le1/2$ and $[\tau-h_{a_n},\tau+h_{a_n}]\subset(0,1)$. The alternatives are jump-free with bounded c\`adl\`ag diffusion coefficients. Their covariance matrices have identity covariate block and only the $(j,Y)$ and $(Y,j)$ entries vary with time. Since
\[
 [f_{j,a_n}]_\alpha
 \le a_n h_{a_n}^{-\alpha}[\psi]_\alpha
 \le L_\Sigma/2^\alpha,
\]
we have, for $h=k\deltan$,
\[
 \frac1h\int_{\tau_n}^{\tau_n+h}
 |f_{j,a_n}(t)-f_{j,a_n}(\tau)|\,dt
 \le [f_{j,a_n}]_\alpha(2h)^\alpha
 \le L_\Sigma h^\alpha.
\]
Thus the alternatives satisfy Assumption~2 with $G_n=\Omega$. They satisfy Assumption~1 because their drift and jump coefficients are zero. The average risk in \eqref{eq:lower-risk} is bounded above by the maximum risk over $\mathcal P_{\alpha,n}(a_n)$. Taking the infimum over all admissible lists proves Theorem~3.

\section{Proof of Proposition 4}

All population correlation quantities in this proof are evaluated at $\tau$, and the subscript $\tau$ is suppressed. The active set $A$ is fixed. If $A=\varnothing$, then
\[
 |\widehat\zeta_{j\mid\varnothing}-\zeta_{j\mid\varnothing}|
 =|\widehat r_j-r_j|\le\eta_r,
\]
which is covered by the stated bound after taking $C_{\mathrm{pr}}(\phi_-)\ge1$.

Now let $A\ne\varnothing$ and define
\[
 E_A=\widehat R_{AA}-R_{AA},
 \qquad
 e_A=\widehat r_A-r_A,
 \qquad
 u_A=R_{AA}^{-1}R_{Aj},
 \qquad
 \widehat v_A=\widehat R_{AA}^{-1}\widehat r_A.
\]
Using $\bnorm{M}_{\op}\le\sqrt{\bnorm{M}_1\bnorm{M}_\infty}$ and the bounds
$\bnorm{E_A}_1\le |A|\bnorm{E_A}_{\max}$ and
$\bnorm{E_A}_\infty\le |A|\bnorm{E_A}_{\max}$, we obtain
\[
 \bnorm{E_A}_{\op}
 \le |A|\bnorm{E_A}_{\max}.
\]
Because $E_A$ is a submatrix of $\widehat R_{\cdot A}-R_{\cdot A}$ and $|A|\le q$,
\[
 \bnorm{E_A}_{\op}\le q\eta_R.
\]
The sparse-eigenvalue condition and Weyl's inequality imply
\[
 \lambda_{\min}(\widehat R_{AA})
 \ge\phi_--q\eta_R
 \ge\phi_-/2,
\]
so $\widehat R_{AA}$ is invertible.

We next record two Schur-complement bounds. If
\[
 \begin{pmatrix}M&b\\b^\top&c\end{pmatrix}\succeq0,
 \qquad M\succ0,
\]
then evaluation of the quadratic form at $(-M^{-1}b,1)$ gives
\[
 b^\top M^{-1}b\le c.
\]
If also $M\succeq\phi I$, then
\[
 \bnorm{M^{-1}b}_2^2
 \le\phi^{-1}b^\top M^{-1}b
 \le c/\phi.
\]
Apply these inequalities first to
\[
 \begin{pmatrix}R_{AA}&R_{Aj}\\R_{jA}&1\end{pmatrix}
\]
and then to the estimated active-response block. The population and estimated augmented correlation matrices are positive semidefinite, and the estimated response diagonal is at most one. Therefore
\[
 \bnorm{u_A}_2\le\phi_-^{-1/2},
 \qquad
 \bnorm{\widehat v_A}_2\le(2/\phi_-)^{1/2}.
\]
Since $|A|\le q$,
\[
 \bnorm{u_A}_1\le\sqrt q\,\phi_-^{-1/2},
 \qquad
 \bnorm{\widehat v_A}_1\le\sqrt{2q/\phi_-}.
\]

The residual-score difference is
\[
 \widehat\zeta_{j\mid A}-\zeta_{j\mid A}
 =(\widehat r_j-r_j)
 -(\widehat R_{jA}-R_{jA})\widehat v_A
 -R_{jA}\left(\widehat v_A-R_{AA}^{-1}r_A\right).
\]
Because $\widehat R_{AA}\widehat v_A=\widehat r_A$,
\[
 R_{AA}\left(\widehat v_A-R_{AA}^{-1}r_A\right)
 =-E_A\widehat v_A+e_A.
\]
Since $R_{jA}=u_A^\top R_{AA}$,
\[
 R_{jA}\left(\widehat v_A-R_{AA}^{-1}r_A\right)
 =-u_A^\top E_A\widehat v_A+u_A^\top e_A.
\]
The four terms are bounded separately. First,
\[
 |\widehat r_j-r_j|\le\eta_r.
\]
Second,
\[
 |(\widehat R_{jA}-R_{jA})\widehat v_A|
 \le\bnorm{\widehat R_{jA}-R_{jA}}_\infty\bnorm{\widehat v_A}_1
 \le\eta_R\bnorm{\widehat v_A}_1.
\]
Third,
\[
 |u_A^\top e_A|\le\bnorm{u_A}_1\bnorm{e_A}_\infty
 \le\bnorm{u_A}_1\eta_r.
\]
Finally,
\begin{align*}
 |u_A^\top E_A\widehat v_A|
 &\le\sum_{r\in A}\sum_{s\in A}|u_{A,r}|\,|(E_A)_{rs}|\,|\widehat v_{A,s}|\\
 &\le\bnorm{u_A}_1\bnorm{E_A}_{\max}\bnorm{\widehat v_A}_1.
\end{align*}
Substituting the $\ell_1$ bounds gives
\begin{align*}
 |\widehat\zeta_{j\mid A}-\zeta_{j\mid A}|
 &\le \eta_r+\sqrt{2q/\phi_-}\,\eta_R
 +\sqrt q\,\phi_-^{-1/2}\eta_r
 +\sqrt2\,q\phi_-^{-1}\eta_R\\
 &\le C_{\mathrm{pr}}(\phi_-)
 \{\sqrt q\,\eta_r+q\eta_R\}.
\end{align*}
The bound is uniform in $j\notin A$, which proves Proposition~4.

\section{Proof of Theorem 5}

If $S=\varnothing$, the conclusion is immediate. Suppose $S\ne\varnothing$ and work on $G_n^\beta$.

\paragraph*{Step 1: Population residual signal}
Let $A\subseteq\{1,\ldots,p_n\}$ satisfy $|A|\le q_n$ and $S\nsubseteq A$, and put $B=S\setminus A$. All population quantities in this step are evaluated at $\tau$. When $A=\varnothing$, all terms involving $R_{AA}^{-1}$ below are interpreted as absent, so $H_{B\mid\varnothing}=R_{BB}$. For nonempty $A$, the sparse-eigenvalue condition implies that $R_{AA}$ is invertible. Since $\theta_{S^c}=0$,
\[
 r_A=R_{AA}\theta_A+R_{AB}\theta_B,
 \qquad
 r_B=R_{BA}\theta_A+R_{BB}\theta_B.
\]
Hence
\begin{equation}
 \zeta_{B\mid A}
 =\{R_{BB}-R_{BA}R_{AA}^{-1}R_{AB}\}\theta_B.
 \label{eq:schur-score}
\end{equation}
Let
\[
 H_{B\mid A}=R_{BB}-R_{BA}R_{AA}^{-1}R_{AB}.
\]
For fixed $y\in\R^{|B|}$ and nonempty $A$, consider
\[
 q_y(x)=x^\top R_{AA}x+2x^\top R_{AB}y+y^\top R_{BB}y.
\]
Its gradient is $2R_{AA}x+2R_{AB}y$, so the unique minimizer is
$x^\star=-R_{AA}^{-1}R_{AB}y$. Substitution gives
\begin{align*}
 q_y(x^\star)
 &=y^\top R_{BB}y-y^\top R_{BA}R_{AA}^{-1}R_{AB}y\\
 &=y^\top H_{B\mid A}y.
\end{align*}
Thus, with the same identity holding trivially when $A=\varnothing$,
\[
 y^\top H_{B\mid A}y
 =\min_x
 (x^\top,y^\top)
 R_{A\cup B,A\cup B}
 (x^\top,y^\top)^\top.
\]
Because $|A\cup B|\le q_n+\bar s_n$, the sparse-eigenvalue condition gives
\[
 y^\top H_{B\mid A}y\ge\phi_-\bnorm{y}_2^2.
\]
Thus $H_{B\mid A}\succeq\phi_-I$. If
$H_{B\mid A}=U\Lambda U^\top$ is an orthogonal diagonalization, then
\begin{align*}
 \bnorm{H_{B\mid A}\theta_B}_2^2
 &=\sum_r\lambda_r^2(U^\top\theta_B)_r^2\\
 &\ge\phi_-^2\bnorm{\theta_B}_2^2.
\end{align*}
Using \eqref{eq:schur-score} and $|\theta_j|\ge a_\beta$ for $j\in B$,
\[
 \bnorm{\zeta_{B\mid A}}_2
 \ge\phi_-\bnorm{\theta_B}_2
 \ge\phi_-\sqrt{|B|}\,a_\beta.
\]
Since $\bnorm{z}_\infty\ge |B|^{-1/2}\bnorm{z}_2$ for $z\in\R^{|B|}$, we obtain
\begin{equation}
 \max_{j\in B}|\zeta_{j\mid A}|\ge\phi_-a_\beta.
 \label{eq:population-residual-signal}
\end{equation}

\paragraph*{Step 2: Uniform sample control}
Define
\[
 E_{\mathrm{PR}}^{\mathrm{path}}
 =\left\{
 \sup_{\substack{A\subseteq\{1,\ldots,p_n\}\\|A|\le q_n}}
 \left[
 \bnorm{\widehat r\{\widehat k(A)\}-r_\tau}_\infty
 \vee
 \bnorm{\widehat R_{\cdot A}\{\widehat k(A)\}-R_{\cdot A,\tau}}_{\max}
 \right]
 \le C_0\varepsilon_n(x)
 \right\}.
\]
Corollary~1 gives
\[
 \Prb(E_{\mathrm{PR}}^{\mathrm{path}})
 \ge1-Ce^{-cx}-o(1).
\]
For a fixed $A$ with $|A|\le q_n$, put
\[
 \widehat r^{(A)}=\widehat r\{\widehat k(A)\},
 \qquad
 \widehat R_{\cdot A}^{(A)}
 =\widehat R_{\cdot A}\{\widehat k(A)\}.
\]
On $E_{\mathrm{PR}}^{\mathrm{path}}\cap G_n^\beta$, Proposition~4 applies with
\[
 \eta_r=\eta_R=C_0\varepsilon_n(x),
 \qquad q=q_n.
\]
Choose $c_{\mathrm{stab}}$ in Theorem~5 so that
$C_0c_{\mathrm{stab}}\le 1/2$. Then
\[
q_n\eta_R
=
C_0q_n\varepsilon_n(x)
\le
C_0c_{\mathrm{stab}}\phi_-
\le
\phi_-/2.
\]
Since the same event gives these bounds for every $A$,
\begin{equation}
 \sup_{|A|\le q_n}\sup_{j\notin A}
 \left|
 \widehat\zeta_{j\mid A}\{\widehat k(A)\}-\zeta_{j\mid A}
 \right|
 \le\Delta_{n,q_n}(x)
 \le\phi_-a_\beta/4.
 \label{eq:uniform-residual-sample}
\end{equation}
The same fixed-$A$ conclusion also implies that every active correlation block visited on this event is nonsingular.

\paragraph*{Step 3: Iterative counting}
Consider an iteration $s<s_n$ reached before support recovery, with $A=A^{(s)}$ and $S\nsubseteq A$. Inductively, the preceding $s$ updates have full block size, so $|A^{(s)}|=sm_n$. Since $s\le s_n-1\le\bar s_n-1$ and $m_n\bar s_n\le q_n$,
\[
 q_n-|A^{(s)}|
 =q_n-sm_n
 \ge q_n-m_n(\bar s_n-1)
 \ge m_n.
\]
Thus the current block size is $m_n$. By \eqref{eq:population-residual-signal}, there exists $j^\star\in S\setminus A$ such that
\[
 |\zeta_{j^\star\mid A}|\ge\phi_-a_\beta.
\]
Equation \eqref{eq:uniform-residual-sample} gives
\[
 \left|
 \widehat\zeta_{j^\star\mid A}\{\widehat k(A)\}
 \right|
 \ge3\phi_-a_\beta/4.
\]
Define
\[
 \mathcal H_A=
 \left\{j\notin A:
 \left|\widehat\zeta_{j\mid A}\{\widehat k(A)\}\right|
 \ge
 \left|\widehat\zeta_{j^\star\mid A}\{\widehat k(A)\}\right|
 \right\}.
\]
For $j\in\mathcal H_A$, the preceding lower bound for $j^\star$ gives
\[
 \left|\widehat\zeta_{j\mid A}\{\widehat k(A)\}\right|
 \ge3\phi_-a_\beta/4.
\]
Using \eqref{eq:uniform-residual-sample},
\[
 |\zeta_{j\mid A}|
 \ge \left|\widehat\zeta_{j\mid A}\{\widehat k(A)\}\right|
 -\phi_-a_\beta/4
 \ge\phi_-a_\beta/2.
\]
Hence every $j\in\mathcal H_A$ contributes at least $(\phi_-a_\beta/2)^2$ to the population residual-score energy. Therefore
\[
 |\mathcal H_A|\frac{(\phi_-a_\beta)^2}{4}
 \le\sum_{j\in\mathcal H_A}\zeta_{j\mid A}^2
 \le\sum_{j\notin A}\zeta_{j\mid A}^2
 \le B_{\mathrm{pa},n}.
\]
The number of coordinates ranked at least as high as $j^\star$ is at most
\[
 \frac{4B_{\mathrm{pa},n}}{(\phi_-a_\beta)^2}
 \le
 \frac{4\bar B_{\mathrm{pa},n}}
 {(\phi_-\underline a_{\beta,n})^2}
 \le m_n.
\]
Because $j^\star\in\mathcal H_A$ and $|\mathcal H_A|\le m_n$, there are at most $m_n$ coordinates whose score is no smaller than the score of $j^\star$. The selected block of size $m_n$ must therefore contain $j^\star$ or another missing active coordinate with an equal or larger score. The deterministic increasing-index tie rule fixes the ordering when scores are equal. Hence every update before support recovery adds at least one missing active coordinate.

We now argue by induction. At $s=0$, $A^{(0)}=\varnothing$ and
$|A^{(0)}\cap S|=0$. Suppose $s<s_n$ and
$|A^{(s)}\cap S|\ge s$. If the full support has not yet been recovered, the preceding argument shows that update $s$ adds at least one coordinate from $S\setminus A^{(s)}$. Hence
\[
 |A^{(s+1)}\cap S|\ge |A^{(s)}\cap S|+1\ge s+1.
\]
Thus
\[
 |A^{(s)}\cap S|\ge\min(s,s_n)
\]
for every completed update. Moreover, $m_n\bar s_n\le q_n$ implies
\[
 L_n=\lceil q_n/m_n\rceil\ge\bar s_n\ge s_n.
\]
Thus the algorithm has enough updates to recover all active coordinates before the model-size cap is reached, and
\[
 S\subseteq A^{(s_n)}\subseteq A^{\mathrm{fin}}=\widehat S_{\mathrm{PR}}
\]
on $E_{\mathrm{PR}}^{\mathrm{path}}\cap G_n^\beta$. Finally,
\begin{align*}
 \Prb\{S\nsubseteq\widehat S_{\mathrm{PR}}\}
 &\le\Prb\{(E_{\mathrm{PR}}^{\mathrm{path}})^c\}
 +\Prb\{(G_n^\beta)^c\}\\
 &\le Ce^{-cx}+o(1),
\end{align*}
which proves Theorem~5.

\section{Computational details}

The implementation uses the longest candidate window only to form truncated increments and cumulative sums. Marginal covariances and diagonal covariances are stored at the $K_n=|\mathcal K_n|$ candidate endpoints. At a PR-SISIS update with active set $A$, only the columns $\widehat R_{\cdot A}(k)$ required by the partial-residual scores are formed. A column is computed when its variable first enters the active set and is then cached.

With model cap $q_n$, at most $q_n$ distinct active columns are requested. Thus at most $O(p_nq_n)$ distinct covariate pairs are evaluated. Storing their values at all candidate windows requires $O(K_np_nq_n)$ window-specific correlation summaries, in addition to the $O(K_np_n)$ marginal and diagonal summaries and the $O(k_{\max}p_n)$ local increment block. A direct cumulative-sum implementation has arithmetic cost
\[
 O\{k_{\max}p_n(1+q_n)\}
\]
for the marginal, diagonal, and requested-column products. The active-set Lepski comparisons add at most
\[
 O\left(K_n^2p_n\sum_s\{1+|A^{(s)}|\}\right),
\]
and the active solves involve matrices of dimension at most $q_n$. For fixed $q_n$ and a fixed candidate grid, these costs are linear in $p_n$.

Before each solve, the implementation checks the active correlation block. If that block is singular, the algorithm stops and returns the current active set. Otherwise, the partial-residual score uses the ordinary matrix inverse.

\section{Simulation design details}

\subsection{Data-generating process}

Each path is observed on $[0,1]$ at $n=390$ one-minute increments, with target $\tau=1/2$ and support
\[
 S=\{1,\ldots,5\}.
\]
In the regular cancellation designs LC-R $(p=50)$ and HC-R $(p=500)$, the correlation matrix $R$ is block diagonal. The first three $2\times2$ blocks have correlations $0.60$, $0.55$, and $0.60$. The remaining inactive coordinates form an AR(1) block with parameter $0.10$. The coefficient direction is
\[
 b=(0.70,-0.70,0.60,-0.60,0.50,0,\ldots,0)^\top.
\]
We set $\beta_0=\lambda b$ and $c_\varepsilon=1$. At each prescribed standardized beta-min, $\lambda$ solves
\[
 a_\beta
 =\frac{0.50\lambda}
 {\{\lambda^2b^\top Rb+c_\varepsilon\}^{1/2}}.
\]
The weak-dependence design LW has $p=50$, $R_{jk}=0.10^{|j-k|}$, $c_\varepsilon=0.50$, and active coefficients
\[
 (0.60,0.55,0.50,0.45,0.40),
\]
which give $a_\beta\approx0.283$. HC-S modifies HC-R by setting $R_{56}=R_{65}=0.80$, with coordinate 6 inactive, and chooses $\lambda$ so that $a_\beta=0.40$.

All designs use the same stochastic volatility, leverage, intraday periodicity, and finite-activity jumps. Let
\[
 v_0=1,
 \qquad
 s(t)=1+0.15\sin(2\pi t),
 \qquad
 g(t)=1+0.10\sin\{2\pi(t-\tau)\}.
\]
The processes satisfy
\[
 dv_t
 =\frac{6}{252}(1-v_t)\,dt
 +\frac{0.50}{\sqrt{252}}\sqrt{v_t}\,dU_t
 +dJ_t^v,
\]
\[
 dX_t=s(t)\sqrt{v_t}\,R_t^{1/2}dB_t+dJ_t^X,
\]
and
\[
 dY_t
 =\{g(t)\beta_0\}^\top dX_t
 +s(t)\sqrt{c_\varepsilon v_t}\,dW_t^\varepsilon
 +dJ_t^\varepsilon.
\]
The leverage specification is
\[
 d\langle B_1,U\rangle_t=-0.30\,dt,
\]
with all other Brownian covariations equal to zero. The volatility process is simulated by full-truncation Euler with five latent substeps per observed increment and a floor of $10^{-10}$.

Independent Poisson approximations on the latent grid generate common, coordinate-specific covariate, and residual jump events with intensities $4/252$, $8/252$, and $8/252$. At a common event, covariate $j$ receives
\[
 12\sqrt{v_{t-}/390}\,\xi_j,
\]
the residual receives
\[
 12\sqrt{c_\varepsilon v_{t-}/390}\,\xi_\varepsilon,
\]
and volatility is multiplied by $\exp(\eta)$, where the $\xi$ variables are independent standard normal variables and
\[
 \eta\sim N(-0.03,0.10^2).
\]
A coordinate-specific event applies the same covariate jump distribution to one uniformly selected coordinate. A residual event affects only the regression residual. The response inherits covariate jumps through $\{g(t)\beta_0\}^\top\Delta X_t$.

The candidate grid is
\[
 \mathcal K_n=\{32,40,50,63,79,99,124,155\},
\]
with $C_L=0.40$ and $x=0$. For coordinate $a$, the full-day bipower scale and threshold are
\[
 \widehat s_{a,n}
 =\left[
 \max\left\{
 \frac{\pi}{2}\sum_{i=2}^{390}
 |\Delta_i^nZ_a\Delta_{i-1}^nZ_a|,
 10^{-12}
 \right\}
 \right]^{1/2},
 \qquad
 \widehat\vartheta_{a,n}=4\widehat s_{a,n}\deltan^{0.47}.
\]
The same rule is applied to the response. Local covariance diagonals are floored at $10^{-12}$ before standardization. PR-SISIS uses $m_n=2$ and $q_n=d$, with $d=10$ except in the HC-R comparison with $d=20$. Ties are resolved by increasing coordinate index.

\subsection{Window selection in the main simulation designs}

Across the main simulation designs, the $6{,}900$ simulated paths produce $36{,}500$ PR-SISIS updates. The first update selects the largest candidate on $6{,}853$ paths $(99.3\%)$. The final ranking stage uses a shorter candidate on $348$ paths $(5.0\%)$. At least one decrease from one update to the next occurs on $301$ paths $(4.4\%)$. The output contains $22$ distinct window sequences.

\subsection{Pathwise adaptation under changing correlations}

The dynamic-correlation experiment uses $p=100$, $m_n=2$, $q_n=10$, and $200$ independent paths per design. The coefficient direction is the regular cancellation direction from the preceding subsection, with $c_\varepsilon=1$ and $\lambda$ chosen so that $a_\beta=0.40$. W-Fast, W-Medium, and W-Slow have the same correlation matrix at $\tau$. After the target, the first three block correlations move from
\[
 (0.60,0.55,0.60)
\]
to
\[
 (-0.60,-0.55,-0.60).
\]
For $u=(t-\tau)_+$, define
\[
 w_{a,b}(u)=
 \begin{cases}
 0, & u\le a,\\
 3z^2-2z^3, & a<u<b,\quad z=(u-a)/(b-a),\\
 1, & u\ge b.
 \end{cases}
\]
The transition intervals $(a,b)$ are
\[
 (32/n,50/n),
 \qquad
 (63/n,99/n),
 \qquad
 (124/n,155/n)
\]
in W-Fast, W-Medium, and W-Slow. The remaining data-generating components are the same as in the regular cancellation design.

The first PR-SISIS update usually selects 155 minutes. After active correlation columns enter the comparison, the final median windows are 79, 124, and 155 minutes in W-Fast, W-Medium, and W-Slow. No singularity or early termination occurs.

\begin{table}[htbp]
\centering
\caption{Pathwise screening under changing correlations}
\label{tab:pathwise-dynamic}
\begin{tabular}{lrrrrrr}
\toprule
& \multicolumn{2}{c}{Sure (\%)} & \multicolumn{2}{c}{Median window} & \multicolumn{2}{c}{Average last-active rank}\\
\cmidrule(lr){2-3}\cmidrule(lr){4-5}\cmidrule(lr){6-7}
Design & M-SIS & PR-SISIS & M-SIS & PR-SISIS & M-SIS & PR-SISIS\\
\midrule
W-Fast   & 96.5 & 99.0  & 155 & 79  & 5.9  & 5.5\\
W-Medium & 97.5 & 100.0 & 155 & 124 & 5.6  & 5.0\\
W-Slow   & 69.0 & 100.0 & 155 & 155 & 10.6 & 5.1\\
\bottomrule
\end{tabular}
\end{table}

\section{Additional numerical checks}

\subsection{Full-matrix window experiment}

This experiment examines the full correlation estimator controlled by Theorem~1 under the three changing-correlation designs. For $k\in\mathcal K_n$, let $r(k)$ and $R(k)$ denote the population correlations averaged over the forward window, and define
\[
 B(k)=\bnorm{r(k)-r_\tau}_\infty
 \vee\bnorm{R(k)-R_\tau}_{\max},
\]
\[
 \operatorname{Err}(k)
 =\bnorm{\widehat r(k)-r_\tau}_\infty
 \vee\bnorm{\widehat R(k)-R_\tau}_{\max}.
\]
The oracle window minimizes $\operatorname{Err}(k)$ over $\mathcal K_n$.

\begin{table}[htbp]
\centering
\caption{Full-matrix window experiment}
\label{tab:full-matrix-window}
\begin{tabular}{lrrrrrrr}
\toprule
& $B(155)$ & \multicolumn{3}{c}{Window (minutes)} & \multicolumn{3}{c}{Max-norm error}\\
\cmidrule(lr){3-5}\cmidrule(lr){6-8}
Design & & Oracle med. & Selected med. & Selected IQR & Selected & Oracle & Ratio\\
\midrule
W-Fast   & 0.839 & 63  & 79  & [63,79]   & 0.591 & 0.487 & 1.211\\
W-Medium & 0.540 & 99  & 124 & [99,124]  & 0.407 & 0.374 & 1.091\\
W-Slow   & 0.120 & 155 & 155 & [124,155] & 0.318 & 0.304 & 1.048\\
\bottomrule
\end{tabular}
\begin{flushleft}
\footnotesize
Note. $B(155)$ is the population joint bias at the largest candidate window. Entries are based on 500 replications.
\end{flushleft}
\end{table}

The selected and oracle windows increase as the target correlation structure persists for longer. The selected median is one grid point above the oracle median in W-Fast and W-Medium and equals the oracle median in W-Slow.

\IfFileExists{Figure_window_frequencies.pdf}{
\begin{figure}[htbp]
\centering
\includegraphics[width=.62\textwidth]{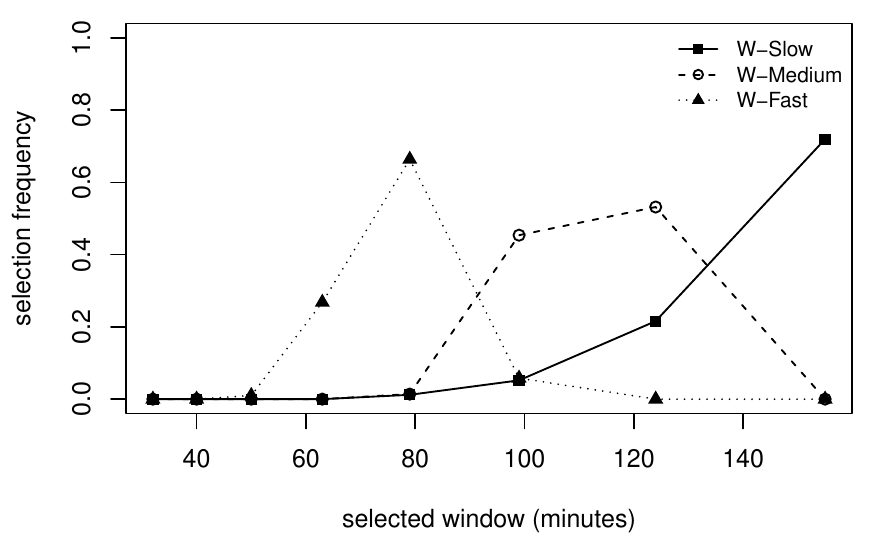}
\caption{Selection frequencies in the full-matrix window experiment. The three designs differ only in the timing of the post-target correlation transition.}
\label{fig:full-window-frequency}
\end{figure}
}{
}

\subsection{Sensitivity to the comparison constant, truncation, and jumps}

We use 200 independent replications for each setting and common random numbers across values within each experiment. Performance is similar for $C_L\in\{0.40,0.60,0.80\}$. The smaller value $C_L=0.25$ gives substantially lower sure-screening probabilities in HC-R. Changing the truncation multiplier from 4 to 3 or 5, or the exponent from 0.47 to 0.45 or 0.49, leaves the PR-SISIS sure-screening percentages unchanged in both designs. The M-SIS values change by at most 0.5 percentage points. Removing jumps or tripling the finite-activity jump intensities also produces only small changes. No singularity or early termination occurs in these checks.

\begin{table}[htbp]
\centering
\caption{Sensitivity to the comparison constant in HC-R}
\label{tab:CL-sensitivity}
\begin{tabular}{crrrr}
\toprule
& \multicolumn{2}{c}{Sure (\%)} & \multicolumn{2}{c}{Average last-active rank}\\
\cmidrule(lr){2-3}\cmidrule(lr){4-5}
$C_L$ & M-SIS & PR-SISIS & M-SIS & PR-SISIS\\
\midrule
0.25 & 1.0  & 33.5 & 163.2 & 167.2\\
0.40 & 10.0 & 98.0 & 70.1  & 6.7\\
0.60 & 10.0 & 98.0 & 70.1  & 6.7\\
0.80 & 10.0 & 98.0 & 70.1  & 6.7\\
\bottomrule
\end{tabular}
\end{table}

\begin{table}[htbp]
\centering
\caption{Sensitivity to the truncation rule}
\label{tab:trunc-sensitivity}
\begin{tabular}{lrrrr}
\toprule
& \multicolumn{2}{c}{HC-R sure (\%)} & \multicolumn{2}{c}{LW sure (\%)}\\
\cmidrule(lr){2-3}\cmidrule(lr){4-5}
Threshold setting & M-SIS & PR-SISIS & M-SIS & PR-SISIS\\
\midrule
Multiplier 3          & 5.5 & 97.5 & 99.0 & 100.0\\
Baseline $(4,0.47)$   & 6.0 & 97.5 & 99.0 & 100.0\\
Multiplier 5          & 6.0 & 97.5 & 99.0 & 100.0\\
Exponent 0.45         & 6.0 & 97.5 & 99.0 & 100.0\\
Exponent 0.49         & 6.0 & 97.5 & 99.0 & 100.0\\
\bottomrule
\end{tabular}
\end{table}

\begin{table}[htbp]
\centering
\caption{Jump robustness in HC-R}
\label{tab:jump-robustness}
\begin{tabular}{lrrrr}
\toprule
& \multicolumn{2}{c}{Sure (\%)} & \multicolumn{2}{c}{Average last-active rank}\\
\cmidrule(lr){2-3}\cmidrule(lr){4-5}
Jump setting & M-SIS & PR-SISIS & M-SIS & PR-SISIS\\
\midrule
No jumps         & 8.5 & 98.0 & 78.0 & 7.7\\
Baseline jumps   & 8.5 & 98.0 & 78.2 & 7.7\\
Higher intensity & 8.0 & 97.5 & 78.1 & 8.1\\
\bottomrule
\end{tabular}
\end{table}

\FloatBarrier
\section{Additional empirical details}

After excluding the two shortened sessions, the sample contains 251 complete regular U.S. trading days in 2020. On each retained day, the adjusted 09:30 overnight observation is removed, leaving 390 one-minute decimal simple returns with endpoints from 09:31 through 16:00. Adjacent bipower products are computed within each day. The 10:00 target is the endpoint of increment 30, and the first forward return has endpoint 10:01. No demeaning, winsorization, normalization, or factor preselection is applied.

Both methods complete on all 251 retained days. The PR-SISIS singularity and early-termination counts are zero. The marginal and first-update selectors coincide and choose 155 increments on 245 days $(97.6\%)$. The final PR-SISIS selector chooses 155 increments on 235 days $(93.6\%)$ and a shorter candidate on 16 days $(6.4\%)$. The sample contains 12 distinct PR-SISIS window sequences. The mean number of computed active columns is 10.00, and the median daily runtime is 0.036 seconds. Across all selected active blocks, the minimum eigenvalue is 0.006470369 and the maximum condition number is 606.578.

The M-SIS and PR-SISIS top-10 lists overlap in 2.80 factors on average, with median overlap 3. Industry portfolios account for 0.826 of all retained M-SIS positions and 0.552 of all retained PR-SISIS positions.

\end{document}